\documentclass[12pt]{article}
%Arxiv submission June 27, 2018
\usepackage{a4wide,amsmath,amssymb,amsthm,comment,amsrefs,mathdots,mathtools,amscd}
\usepackage[usenames]{color}
\usepackage[stable]{footmisc}

\newcommand{\abs}[1]{\left|{#1}\right|}
\newcommand{\norm}[1]{\lVert#1\rVert}
\newcommand{\GL}{\operatorname{GL}}
\newcommand{\Ad}{\operatorname{Ad}}
\newcommand{\swrz}{\mathcal{S}}
\newcommand{\Eisen}{\mathcal{E}}
\newcommand{\Kir}{\mathcal{K}}
\newcommand{\typKir}{KS}
\newcommand{\itint}{\int^{\operatorname{it}}}
\newcommand{\spn}{\operatorname{span}}
\newcommand{\AT}{(AT)}
\newcommand{\PD}{PD}
\newcommand{\lunq}{\le_{\operatorname{unq}}}
\newcommand{\mcp}{\kappa}
\newcommand{\tr}{\operatorname{tr}}
\newcommand{\mult}{\mathcal{M}}
\newcommand{\ext}{e}
\newcommand{\sprod}[2]{\left\langle{#1},{#2}\right\rangle}

\newcommand{\sqr}{\operatorname{sqr}}
\newcommand{\ord}{\operatorname{ord}}
\newcommand{\temp}{\operatorname{tmp}}
\newcommand{\Seg}{\mathcal{SEG}}
\newcommand{\Lie}{\operatorname{Lie}}
\newcommand{\Lieg}{\mathfrak{g}}
\newcommand{\one}{\mathbf{1}}
\newcommand{\data}{\mathfrak{d}}
\newcommand{\soc}{\operatorname{soc}}
\newcommand{\cs}{\mathfrak{c}}
\newcommand{\prt}{\mathcal{P}}
\newcommand{\pairing}{\Delta}
\renewcommand{\Re}{\operatorname{Re}}
\newcommand{\mc}{\operatorname{MC}}
\newcommand{\hmgns}{$m$-homogeneous}
\newcommand{\hmgnss}{m-\operatorname{hmgns}}
\newcommand{\expo}{\mathbf{e}}
\newcommand{\cusp}{\operatorname{cusp}}
\newcommand{\esqr}{\operatorname{esqr}}
\newcommand{\m}{\mathfrak{m}}
\newcommand{\Irr}{\operatorname{Irr}}
\newcommand{\gen}{\operatorname{gen}}
\newcommand{\Bil}{\mathcal{B}}
\newcommand{\trns}{\mathcal{T}}
\newcommand{\wi}{\hat w_i}
\newcommand{\wm}{\hat w_{m-1}}
\newcommand{\ww}{w_{m,n}}
\newcommand{\rest}{|}
\newcommand{\cmap}{\mathfrak{c}}
\newcommand{\Sh}{\operatorname{Sh}}
\newcommand{\typSh}{W_{\Sh}}
\newcommand{\Ze}{\operatorname{Ze}}
\newcommand{\typZe}{W_{\Ze}}
\newcommand{\A}{\mathbb{A}}
\newcommand{\Z}{\mathbb{Z}}
\newcommand{\C}{\mathbb{C}}
\newcommand{\R}{\mathbb{R}}
\newcommand{\fpsi}{\Psi}
               % left-shift
              % right-shift
\newcommand{\OOO}{\mathcal{O}}
\newcommand{\bs}{\backslash}
\newcommand{\rk}{\operatorname{rk}}
\newcommand{\diag}{\operatorname{diag}}
\newcommand{\Mat}{\operatorname{Mat}}
\newcommand{\Hom}{\operatorname{Hom}}
\newcommand{\Res}{\operatorname{Res}}
\newcommand{\Ind}{\operatorname{Ind}}
\newcommand{\ind}{\operatorname{ind}}
\newcommand{\Sp}{\operatorname{Sp}}
\newcommand{\mir}{D}
\newcommand{\Whit}{\mathcal{W}}

\newcommand{\sm}[4]{\left(\begin{smallmatrix}{#1}&{#2}\\{#3}&{#4}\end{smallmatrix}\right)}

\newenvironment{pmallmatrix}
{\left(\begin{smallmatrix}}
{\end{smallmatrix}\right)}

\newtheorem{theorem}{Theorem}[section]
\newtheorem{lemma}[theorem]{Lemma}

\newtheorem{proposition}[theorem]{Proposition}%[subsection]
\newtheorem{corollary}[theorem]{Corollary}%[subsection]
\newtheorem{question}[theorem]{Question}%[subsection]
\newtheorem{conjecture}[theorem]{Conjecture}%[subsection]
\theoremstyle{remark}
\newtheorem{remark}[theorem]{Remark}%[subsection]
\newtheorem{example}[theorem]{Example}%[subsection]

\begin{document}

\title{Local Rankin--Selberg integrals for Speh representations}
\author{Erez M. Lapid \and Zhengyu Mao}
\date{\today}
\maketitle

\begin{abstract}
We construct analogues of Rankin--Selberg integrals for Speh representations of the general linear group
over a $p$-adic field. The integrals are in terms of the Shalika model and
are expected to be the local counterparts of (suitably regularized) global integrals involving square-integrable
automorphic forms and Eisenstein series on the general linear group over a global field.
We relate the local integrals to the classical ones studied by Jacquet--Piatetski-Shapiro--Shalika.
We also introduce a unitary structure for Speh representation on the Shalika model, as well as various
other models including Zelevinsky's degenerate Whittaker model.
\end{abstract}

\setcounter{tocdepth}{1}
\tableofcontents

\section{Introduction}
The theory of Rankin--Selberg integrals for $\GL_n\times\GL_{n'}$, studied by Jacquet--Piatetski-Shapiro--Shalika
in a series of papers starting from the late 1970s, is a basic tool in the theory of automorphic forms
with an abundance of applications.
The theory is based on global zeta integrals (which involve Eisenstein series in the case $n'=n$)
that unfold to adelic integrals of Whittaker--Fourier coefficients of cuspidal representations.
By local multiplicity one, these integrals factorize into a product of local zeta integrals
which are pertaining to generic representations and their Whittaker models.

The purpose of this paper is to study a modification of the local Rankin-Selberg integrals
in the equal rank case for the so-called Speh representations.
The latter are the Langlands quotients $\Sp(\pi,m)$ of parabolic induction of essentially tempered representations
of the form $\pi\abs{\det}^{\frac{m-1}2}\otimes\pi\abs{\det}^{\frac{m-3}2}\otimes\dots\otimes\pi\abs{\det}^{\frac{m-1}2}$
where $\pi$ is a tempered representation of $\GL_n$ and $m\ge1$ is any integer.
(In the body of the paper we consider more generally generic $\pi$'s.)
%Here $m\ge1$ is an integer which is called the level.
These representations (of $\GL_{mn}$) are not generic if $m>1$ (i.e., they do not admit a Whittaker model).
Instead, the integrals involve the so-called Shalika model. (Strictly speaking, Shalika did not single out
Speh representations, and his model is more specific.)
It is known that any Speh representation admits a unique Shalika model, a fact which reflects the ``smallness''
of the Speh representation.
Structurally, the new integrals look very much like the classical ones and in fact they can be explicitly related.
In particular, the unramified computation reduces to that of the classical Rankin--Selberg integrals
(which in turn, uses Cauchy's identity and Shintani's formula for the unramified Whittaker function of $\GL_n$).

Just like the Whittaker model gives rise to the so-called Kirillov model (by restriction to the mirabolic subgroup,
namely, the stabilizer of a vector in $\GL_n$ in its standard $n$-dimensional representation)
the Shalika model gives rise to a closely related object which we call the Kirillov--Shalika model.
%Of course, in the case $m=1$ (where the Shalika model is just the Whittaker model) this amounts to the usual Kirillov model.
The role of the mirabolic subgroup is now played by the joint stabilizer of $m$ linearly independent vectors
in $\GL_{mn}$.
The argument of Gelfand--Kazhdan shows that the Kirillov--Shalika model contains
all functions that are compactly supported modulo the equivariance subgroup.

There are however some differences between the classical theory and its suggested analogue.
First, in the unramified case, we are unaware of a simple closed formula for the spherical function in the Shalika model,
except if $n\le2$ or if $n=3$ and $m=2$.
A related, equally difficult, problem is the asymptotic behavior of a function in the Shalika model.
Apart from the above-mentioned cases, both problems go beyond the ``comfort zone'' of spherical varieties.
Moreover, at this stage it is not clear whether there is analogue of the Bernstein--Zelevinsky theory of derivatives
in the case at hand. In particular, we do not know whether the restriction of $\Sp(\pi,m)$
to a parabolic subgroup of type $((n-1)m,m)$ is of finite length.

Another aspect of the paper is to provide an explicit, manifestly positive,
unitary structure for the Speh representation in its Shalika model.
Once again, this is modeled on the generic case in which Bernstein gives a unitary structure for
unitarizable representations on the Whittaker model by taking the $L^2$-inner product of Whittaker functions
restricted to the mirabolic subgroup.
For $m>1$ we use instead the joint stabilizer of $m$ vectors, as before.

Along with the abovementioned Shalika model, the Speh representation admits various other models, for instance
the degenerate Whittaker model considered by Zelevinsky (for any irreducible representation).
We can think of this as a sequence of models starting from the Zelevinsky model and ending with the Shalika model.
They all involve a character on a unipotent subgroup and are covered by the general construction of M\oe glin--Waldspurger.
The unipotent subgroups in the sequence are decreasing.
One can write down explicit isomorphisms (transition maps) between these models.
This idea was used by many authors, most recently and systematically by Gomez--Gourevitch--Sahi.
It also played a role in the recent work of Cai--Friedberg--Kaplan on new doubling constructions of $L$-functions.
We write an inner product for each of these models and show that the transition maps are unitary.

As far as we know, this is the first time a purely local, manifestly positive hermitian form for a general Speh representations
is given explicitly.
Of course, the intertwining operator on the standard module whose image is the Speh representation
induces its unitary structure -- a fact that is true in general for any unitarizable representation on a reductive group.
(In the case at hand we will explicitly relate this unitary structure to the one on the Shalika model.)
However the semidefiniteness of the intertwining operator is far from obvious -- in fact it is equivalent to unitarizability,
which is known to be a difficult problem in general.
Another realization of the inner product is obtained by using global theory to embed Speh representations
as local constituents of automorphic forms in the discrete spectrum of $\GL_{mn}$ over the adeles. Finally, in the $m=2$ case
one can also realize a Speh representation in the discrete spectrum of $L^2(H\bs\GL_{2n})$ where $H$ is the symplectic group
of rank $n$. However, there is no such analogue for $m>2$.

%\Erez{Hope to study dimension of fixed vectors under $K(n)$ using this}

In principle, the new local integrals are the local counterpart of certain global integrals, just as in the classical case.
However, in addition to Eisenstein series, these global integrals involve automorphic forms in the discrete spectrum,
rather than cusp forms, and they unfortunately do not converge (for any value of $s$).
It should be possible to carry out a regularization procedure to make sense of these integrals and to justify
the unfolding procedure. However, we will not discuss this aspect in the paper.

%\Erez{relation to Friedberg e al}

We now give some more details about the contents of the paper.
In \S\ref{sec: mhmgns}, we first introduce some notation and recall Zelevinsky's classification of irreducible representations
of the general linear group over a local non-archimedean field $F$.
We then introduce the class of \hmgns\ representations, which includes the Speh representations and which is the main focus of the paper.
In terms of Zelevinsky's classification they simply correspond to multisegments consisting
of segments of length $m$, where $m\ge1$ is a fixed integer parameter.
The case $m=1$ exactly corresponds to generic representations -- i.e., the classical theory.
In \S\ref{sec: models} we introduce the models pertaining to \hmgns\ representations, following M\oe glin--Waldspurger.
(In order to use their results, we assume from \S\ref{sec: models} onward that $F$ is of characteristic $0$,
although we expect that this assumption is superfluous.)
We also introduce the transition maps between the models.
They are given by integrals which entail no convergence issues.
Finally, we introduce the Kirillov--Shalika model which is the analogue of the classical Kirillov model for generic representations.
In \S\ref{sec: unit} we introduce a family of bilinear forms on a pair of models of \hmgns\ representations.
In the case where the two representations are in duality, these bilinear forms specialize to an invariant pairing,
at least in some cases. In the unitarizable case it gives rise to a manifestly positive invariant unitary structure.
The invariance is proved by induction on $m$ using Bernstein's theorem on invariant distributions with respect to the mirabolic subgroup.
In Appendix \ref{sec: apend} we relate this pairing to the one induced by the intertwining operator on the standard module.
In \S\ref{sec: locint} we define the local Rankin--Selberg integrals for \hmgns\ representations using their Shalika models.
Applying the transition maps we can express these integrals in terms of the Zelevinsky model.
Hence, we get their rationality in $q^s$, the unramified computation and some (but not complete) information about
their poles in general. We also relate these integrals to the abovementioned bilinear forms,
and in particular, to the invariant pairing.
In \S\ref{sec: n=m=2} we go back to the Kirillov--Shalika model and analyze in detail the
case of Speh representations of $\GL_4$ pertaining to supercuspidal representations of $\GL_2$.
We study the asymptotic behavior of a function in the Kirillov--Shalika model.
At this stage, it is hard to tell whether the result is representative of the general case or it is merely a low-rank fluke.
Finally, in \S\ref{sec: global} we write an informal global expression, modeled after the classical Rankin--Selberg integrals,
whose regularization is expected to unfold to the local integrals studied in the paper.
The regularization is necessary as the integral does not converge.
(It would also eliminate extraneous terms in the unfolding procedure.)
However, we do not discuss the regularization procedure and only give a purely heuristic argument.

\subsection*{Acknowledgement}
We thank Tobias Finis, Wee Teck Gan, Max Gurevich, Herv\'e Jacquet, Yiannis Sakellaridis and Akshay Venkatesh for useful discussions.
The paper was written while the first-named author was a member at the Institute for Advanced Study in Princeton.
He thanks the IAS warmly for its support and wonderful working conditions.

\section{\hmgns\ representations\footnote{This notion should not be confused with Zelevinsky's notion of homogenous
representations \cite{MR584084}}}
\label{sec: mhmgns}

\subsection{Notation}
Throughout the paper fix a non-archimedean local field $F$ with ring of integers $\OOO$ and absolute value $\abs{\cdot}$.
If $H$ is an algebraic group over $F$, we often also use $H$ to denote $H(F)$.

From section \ref{sec: models} onward $F$ is assumed to be of characteristic $0$
(although this assumption probably can be lifted).
In principle it should be possible to deal with the archimedean case as well with proper adjustments,
but we do not consider this case here.

We will consider complex, smooth representations of finite length of the groups $\GL_n(F)$, $n\ge0$.
We denote the set of irreducible representations of $\GL_n(F)$ (up to equivalence) by $\Irr\GL_n$
and set $\Irr=\cup_{n\ge0}\Irr\GL_n$. We write $\Irr\GL_0=\{\one\}$.
(In contrast, the one-dimensional trivial character of $\GL_1$ will be denoted by $\one_{F^*}$.)
The subset of supercuspidal (resp., square-integrable, essentially square-integrable, tempered, generic) representations will be denoted by
$\Irr_{\cusp}$ (resp., $\Irr_{\sqr}$, $\Irr_{\esqr}$, $\Irr_{\temp}$, $\Irr_{\gen}$).

Let $\pi$ be a representation of $\GL_n(F)$.
We denote by $\pi^\vee$ the contragredient of $\pi$
and by $\soc(\pi)$ the socle of $\pi$ (the maximal semisimple subrepresentation of $\pi$).
If $\pi$ is non-zero then we write $\deg\pi=n$, the degree of $\pi$.
For any character $\omega$ of $F^*$ (i.e., $\omega\in\Irr\GL_1$)
we denote by $\pi\omega$ the representation obtained from $\pi$ by twisting by the character $\omega\circ\det$.
For instance, $\pi\abs{\cdot}$ is the twist of $\pi$ by $\abs{\det}$.
We also write $J_P(\pi)$ for the (normalized) Jacquet module of $\pi$ with respect to a parabolic subgroup
$P$ of $\GL_n$, defined over $F$.
If $\tau\in\Irr\GL_n$ then we write $\tau\le\pi$ if $\tau$ occurs as a subquotient of $\pi$.
If in addition $\tau$ occurs with multiplicity one in the Jordan--H\"older sequence of $\pi$
then we write $\tau\lunq\pi$.

If $\pi_1,\dots,\pi_k$ are representations of $\GL_{n_1}(F),\dots,\GL_{n_k}(F)$ respectively
then we denote the representation parabolically induced
from $\pi_1\otimes\dots\otimes\pi_k$ (normalized induction), with respect to the standard
parabolic subgroup of block upper triangular matrices, by $\pi_1\times\dots\times\pi_k$
and refer to it as the product representation.
We also use the notation $\Ind_H^G$ and $\ind_H^G$ to denote induction and compact induction
(both normalized) from a subgroup $H$ of $G$.

For any $\tau\in\Irr_{\esqr}$ let $\expo(\tau)$ be the unique real number $s$ such that the
twisted representation $\tau\abs{\cdot}^{-s}$ is unitarizable (i.e., has a unitary central character).
Note that $\expo(\tau^\vee)=-\expo(\tau)$.
Any $\pi\in\Irr_{\gen}$ can be written uniquely (up to permutation) as $\pi=\tau_1\times\dots\times\tau_k$ where
$\tau_i$ are essentially square-integrable. %Moreover, up to permutation $\tau_1,\dots,\tau_k$ are uniquely determined by $\pi$.
Let $\expo(\pi)=\min\expo(\tau_i)$.
Then $\expo(\pi)+\expo(\pi^\vee)\le0$ with equality if and only if $\pi$ is essentially tempered
and $\pi$ is tempered if and only if $\expo(\pi)=\expo(\pi^\vee)=0$.
More generally, we will say that $\pi$ is ``approximately tempered'' \AT\ if $\expo(\pi)+\expo(\pi^\vee)+1>0$.
Equivalently, $\expo(\tau_i)-\expo(\tau_j)<1$ for all $i,j$.
It is known that every unitarizable $\pi\in\Irr_{\gen}$ is \AT.
We denote by $\Irr_{\AT}$ the set of \AT\ representations.

For any set $A$ we denote by $\mult(A)$ the free commutative monoid generated by $A$,
considered as an ordered monoid.
Thus, an element of $\mult(A)$ (a multiset of $A$) is a finite (possibly empty) formal sum of element of $A$.

\subsection{Zelevinsky classification}
We recall the well-known results and terminology of \cite{MR584084}.

A \emph{segment} $\Delta$ (of length $l>0$ and center $\rho\in\Irr_{\cusp}$)
is a non-empty finite subset of $\Irr_{\cusp}$ of the form
\[
\Delta_\rho^{(l)}=\{\rho\abs{\cdot}^{\frac{1-l}2},\rho\abs{\cdot}^{\frac{3-l}2},\dots,\rho\abs{\cdot}^{\frac{l-1}2}\}.
\]
We define $\deg\Delta=l\deg\rho$ and write $\ext(\Delta)=\rho\abs{\cdot}^{\frac{l-1}2}\in\Irr_{\cusp}$
(the endpoint of $\Delta$),
\[
\cs(\Delta)=\rho\abs{\cdot}^{\frac{1-l}2}+\rho\abs{\cdot}^{\frac{3-l}2}+\dots+\rho\abs{\cdot}^{\frac{l-1}2}\in\mult(\Irr_{\cusp})
\]
and $\Delta^\vee=\Delta_{\rho^\vee}^{(l)}$.
For compatibility we also write $\Delta_\rho^{(0)}=\emptyset$.
Denote by $\Seg$ the set of all segments.
We extend $\deg$, $\ext$ and $\cs$ to $\mult(\Seg)$ additively.

For any $\Delta=\Delta_\rho^{(l)}\in\Seg$ let
\[
Z(\Delta)=\soc(\rho\abs{\cdot}^{\frac{1-l}2}\times\rho\abs{\cdot}^{\frac{3-l}2}\times\dots\times\rho\abs{\cdot}^{\frac{l-1}2})
\in\Irr\GL_{\deg\Delta}.
\]
(For compatibility we also set $Z(\emptyset)=\one$.)
Then $Z(\Delta)^\vee=Z(\Delta^\vee)$.
Given $\Delta_1,\Delta_2\in\Seg$ we write $\Delta_2\prec\Delta_1$ if
$\Delta_i=\Delta_{\rho_i}^{(l_i)}$ with $\rho_2\abs{\cdot}^{\frac{1-l_2}2+\alpha}=\rho_1\abs{\cdot}^{\frac{1-l_1}2}$
for some $\alpha\in\Z_{>0}$ such that $l_2-l_1<\alpha\le l_2$.
If either $\Delta_2\prec\Delta_1$ or $\Delta_1\prec\Delta_2$ then we say that $\Delta_1$ and $\Delta_2$ are \emph{linked}.
The induced representation $Z(\Delta_1)\times Z(\Delta_2)$ is reducible if and only if $\Delta_1$ and $\Delta_2$ are linked.

The well-known classification result of Zelevinsky \cite[Theorem 6.5]{MR584084} extends the map $\Delta\mapsto Z(\Delta)$
to a degree preserving bijection
\[
\m\mapsto Z(\m)
\]
between $\mult(\Seg)$ and $\Irr$. If $\m=\Delta_1+\dots+\Delta_k$ and $\Delta_i\not\prec\Delta_j$ for any $i<j$
(which can always be arranged) then $Z(\m)=\soc(Z(\Delta_1)\times\dots\times Z(\Delta_k))$.
An element of $\mult(\Seg)$ is called a multisegment.
We have $Z(\m)^\vee=Z(\m^\vee)$ where we extend $^\vee$ from $\Seg$ to $\mult(\Seg)$ additively.
For any $\m_1,\m_2\in\mult(\Seg)$ we have $Z(\m_1+\m_2)\le Z(\m_1)\times Z(\m_2)$.
In particular, if $Z(\m_1)\times Z(\m_2)$ is irreducible then it is equal to $Z(\m_1+\m_2)$.
\begin{multline} \label{eq: suffcond}
\text{A sufficient condition for the irreducibility of $Z(\m_1)\times Z(\m_2)$ is that every segment}\\
\text{which occurs in $\m_1$ is unlinked with every segment which occurs in $\m_2$.}
\end{multline}
By identifying an irreducible supercuspidal representation with a singleton segment we view
$\mult(\Irr_{\cusp})$ as a submonoid of $\mult(\Seg)$. The map $Z$ restricts to a bijection
\[
\mult(\Irr_{\cusp})\rightarrow\Irr_{\gen}.
\]
An element of $\mult(\Irr_{\cusp})$ is a called a cuspidal data.
We write $\cs(Z(\m))=\cs(\m)$. The resulting map
\[
\cs:\Irr\rightarrow\mult(\Irr_{\cusp})
\]
is the supercuspidal support (which of course can be defined without reference to the Zelevinsky classification).

For any segment $\Delta=\Delta_\rho^{(l)}$ let $\Delta^-=\Delta_{\rho\abs{\cdot}^{-\frac12}}^{(l-1)}$
denote either the segment obtained by removing the endpoint $\ext(\Delta)$ of $\Delta$ if $l>1$
or the empty set otherwise.

Let now $\sigma=Z(\m)$ where $\m=\Delta_1+\dots+\Delta_k$. Let
\[
\m^-=\Delta_1^-+\dots+\Delta_k^-\text{ (disregarding empty sets)}.
\]
Define recursively, $\m^{(0)}=\m$ and $\m^{(k)}=(\m^{(k-1)})^-$, $k>0$ with $\m^{(l)}=0$, $l$ minimal.
Let $n_k=\deg\ext(\m^{(k-1)})$, $k=1,\dots,l$ so that $n_1+\dots+n_l=\deg\sigma$
and let $\omega_k=Z(\ext(\m^{(k-1)}))\in\Irr_{\gen}\GL_{n_k}$.
Let $P=P_\sigma=M_\sigma\ltimes U_\sigma=M\ltimes U$ be the standard parabolic subgroup of type $(n_l,\dots,n_1)$. \label{sec: Psig}
By \cite[\S8.3]{MR584084} the Jordan--H\"older sequence of $J_P(\sigma)$ admits a unique generic irreducible representation $\omega$ of $M$
and moreover $\omega\lunq J_P(\sigma)$.
Equivalently, $\dim\Hom_N(\sigma,\psi_P)=\dim\Hom(\sigma,\Ind_N^G\psi_P)=1$ where
$N$ is the maximal nilpotent group of upper unitriangular matrices,
$\psi_P$ is a character of $N$ which is trivial on $U$ and non-degenerate on $N_M=N\cap M$.
(This property determines $P$ uniquely up to association.)
Moreover, $\omega=\omega_l\otimes\dots\otimes\omega_1$ (see e.g., \cite[Lemma 9.17]{MR3178433}).
(For an arbitrary $P$, $\Hom_N(\sigma,\psi_P)$ is finite-dimensional.)
We will call the image of $\sigma$ in $\Ind_N^G\psi_P$ the \emph{Zelevinsky model} of $\sigma$.
In general, it is not true that $\sigma$ is a subrepresentation of $\omega_l\times\dots\times\omega_1$.
For example if $\sigma=Z(\{\one_{F^*},\abs{\cdot}\}+\{\one_{F^*}\})$
then $l=2$, $\omega_2=\one_{F^*}$, $\omega_1=Z(\{\abs{\cdot}\}+\{\one_{F^*}\})$ and $\omega_2\times\omega_1$ is irreducible
(and generic).

\subsection{Ladder representations}
A multisegment $\m$ is called a (strict) ladder if it can be written as $\m=\Delta_1+\dots+\Delta_k$
where $\Delta_{i+1}\prec\Delta_i$ for all $i=1,\dots,k-1$.

\begin{lemma} \label{lem: irredlad}
\begin{enumerate}
\item \cite[Lemma 6.17]{MR3573961}
Let $\pi_1,\dots,\pi_k$ be ladder representations. Then $\pi_1\times\dots\times\pi_k$ is irreducible
if and only if $\pi_i\times\pi_j$ is irreducible for all $i,j$.
\item \cite[Lemma 6.21]{MR3573961} Suppose that $Z(\m_1)$ and $Z(\m_2)$ are two ladder representations.
Suppose that each segment of $\m_1$ occurs in $\m_2$. Then $Z(\m_1)\times Z(\m_2)$ is irreducible.
\end{enumerate}
\end{lemma}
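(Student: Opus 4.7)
The plan is to treat both parts via the Jacquet-module machinery for ladder representations, which gives explicit semisimple descriptions of the relevant Jacquet modules.

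For part (1), the ``only if'' direction is immediate: if some $\pi_i\times\pi_j$ is reducible, then exhibiting a nontrivial subrepresentation of it and using the commutativity of parabolic induction (up to isomorphism, after reordering that preserves irreducibility in the other blocks) produces a nontrivial subquotient of $\pi_1\times\dots\times\pi_k$. For the ``if'' direction I would induct on $k$: the base case $k=2$ is tautological, and for the inductive step I would invoke the standard criterion that, for irreducible $\sigma,\tau$, the product $\sigma\times\tau$ is irreducible iff the long standard intertwining operator $\sigma\times\tau\to\tau\times\sigma$ is a nonzero scalar multiple of an isomorphism. For ladder representations this operator factors as a composition of rank-one intertwiners indexed by pairs $(\Delta,\Delta')$ with $\Delta$ a segment of $\pi_i$ and $\Delta'$ a segment of $\pi_j$, each of which is an isomorphism precisely when $\pi_i\times\pi_j$ is irreducible.

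For part (2), the hypothesis that every segment of $\m_1$ appears in $\m_2$ lets us write $\m_2=\m_1+\m_3$, and the goal is to show $Z(\m_1)\times Z(\m_1+\m_3)$ is irreducible. My plan is a double induction: first on the number of segments of $\m_3$, and then within the base case $\m_3=0$ on the number of segments of $\m_1$. In the inductive step growing $\m_3$, any segment $\Delta$ of $\m_3$ that is unlinked with every segment of $\m_1$ can be peeled off using \eqref{eq: suffcond} together with part (1); when $\Delta$ is linked to some segment of $\m_1$, the ladder structure of $\m_2$ pins down its position in the staircase, and a direct Jacquet-module computation (combined with the uniqueness of the socle of a product of ladders) identifies the resulting product with $Z(\m_1+(\m_1+\m_3))$.

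The main obstacle is the base case $\m_3=0$, namely the ``diagonal'' irreducibility $Z(\m_1)\times Z(\m_1)=Z(2\m_1)$ for a ladder $\m_1$: this is not covered by \eqref{eq: suffcond}, since consecutive segments in a ladder are by definition linked, so one cannot simply commute factors. The cleanest route I foresee is to compute the socle of $Z(\m_1)\times Z(\m_1)$ via the explicit determinantal formula for $J_P(Z(\m_1))$, show that it is simple, has the correct supercuspidal support to equal $Z(2\m_1)$, and that it coincides with the cosocle; since the full product has length at least that of its socle and cosocle combined, this forces irreducibility. It is precisely here that the special staircase structure of a ladder, rather than the weaker unlinkedness condition, is indispensable.
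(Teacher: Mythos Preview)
The paper does not prove this lemma: both parts are simply quoted from \cite{MR3573961} (Lemmas 6.17 and 6.21 there), with no argument given in the present paper. So there is no ``paper's own proof'' to compare against; your proposal is an attempt at an independent proof of results the authors take as black boxes.

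On the content of your sketch: the outline for part (1) is in the right spirit, but the ``standard criterion'' you invoke is not quite standard in the form you state it. For general irreducibles $\sigma,\tau$ of $\GL$, the normalized intertwining operator $\sigma\times\tau\to\tau\times\sigma$ being an isomorphism is sufficient but not obviously equivalent to irreducibility; what one actually uses (and what \cite{MR3573961} uses) is the finer criterion that $\sigma\times\tau$ is irreducible if and only if both $\sigma\times\tau$ and $\tau\times\sigma$ are standardly isomorphic, together with the specific structure of ladders to control socles and cosocles. Your factorization of the long intertwiner into rank-one pieces indexed by pairs of segments is correct for products of \emph{segment} representations, but passing from that to products of \emph{ladders} requires precisely the kind of socle control that is the substance of the argument, so as written this step is circular.

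For part (2), your reduction to the diagonal case $Z(\m_1)\times Z(\m_1)$ is reasonable, but you correctly identify this as the hard core, and the plan you give (``compute the socle via the determinantal formula, show it equals the cosocle'') is more of a restatement of what needs to be proved than a strategy: showing that socle and cosocle coincide is exactly the irreducibility statement. The actual proof in \cite{MR3573961} of both parts proceeds through a systematic analysis of when $\soc(\pi\times\sigma)$ is irreducible and coincides with the cosocle (their ``LI/RI'' formalism), which is considerably more involved than your sketch suggests. If you want a self-contained argument, you would need to develop that machinery or find an alternative route (e.g., via Hecke algebra methods or the Arakawa--Suzuki functor to affine Hecke modules, where the analogous statement is a known combinatorial fact about Specht-type modules).
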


The Jacquet module of ladder representations was described in \cite{MR2996769}.
The following is an immediate consequence.
\begin{lemma}\cite{MR2996769} \label{lem: Jmlad}
Let $\pi=Z(\m)$ be a ladder representation and $P$ a maximal parabolic subgroup.
Then
\begin{enumerate}
\item $J_P(\pi)$ is a direct sum of irreducible representations of the form $\tau\otimes\omega$
where both $\tau$ and $\omega$ are products of ladder representations.
\item \label{part: nongen}
If $\tau\otimes\omega\le J_P(\pi)$ and $\omega\notin\Irr_{\gen}$ then there exists $\rho\in\Irr_{\cusp}$
such that $\rho\le\cs(\omega)$, $\rho\abs{\cdot}\le\ext(\m)$ but $\rho\not\le\ext(\m)$.
\item \label{part: gen} If $\tau\otimes\omega\le J_P(\pi)$ with $\omega\in\Irr_{\gen}$ then $\cs(\omega)\le\ext(\m)$.
Moreover, if $\rho\in\Irr_{\cusp}$ is such that $\rho\le\ext(\m)$ and $\rho\abs{\cdot}\le\cs(\omega)$ then $\rho\le\cs(\omega)$.
\item \label{part: atmostonce} If $\tau\otimes\omega\le J_P(\pi)$ and $\rho\in\Irr_{\cusp}$ is such that $\rho\abs{\cdot}\not\le\cs(\omega)$
then $\rho$ occurs in $\cs(\omega)$ with multiplicity at most one.
\end{enumerate}
\end{lemma}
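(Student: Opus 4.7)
The plan is to read the four statements off the explicit description of $J_P(Z(\m))$ for a ladder $\m=\Delta_1+\dots+\Delta_k$ (with $\Delta_i=[a_i,b_i]$ on a common cuspidal line) proved in \cite{MR2996769}. For a maximal parabolic $P$, that formula presents $J_P(\pi)$ as a multiplicity-free direct sum
\[
J_P(Z(\m)) \;=\; \bigoplus_{c} Z(\m^L_c)\otimes Z(\m^R_c),
\]
indexed by tuples $c=(c_1,\dots,c_k)$ of cut points, one per segment, where $\m^L_c=\sum_i[a_i,c_i]$ and $\m^R_c=\sum_i[c_i+1,b_i]$ with empty segments discarded. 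The admissibility condition on $c$ is exactly what is needed so that $\m^L_c$ and $\m^R_c$ are sums of unlinked ladders, and hence $Z(\m^L_c)$ and $Z(\m^R_c)$ are always products of ladder representations; this gives Part~1 immediately.

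For Part~2, $\omega=Z(\m^R_c)$ is generic precisely when every non-empty right piece is a singleton. If $\omega$ is non-generic, I would pick an index $i$ with $c_i<b_i-1$ and set $\rho=\rho_i\abs{\cdot}^{c_i+1}$. Then $\rho\le\cs(\omega)$ by construction, and the ladder-admissibility of $c$ forces the next cuspidal $\rho\abs{\cdot}=\rho_i\abs{\cdot}^{c_i+2}$ to coincide with the endpoint $\rho_j\abs{\cdot}^{b_j}$ of some other $\Delta_j$, while $\rho$ itself sits in the interior of $\Delta_i$ and is not an endpoint of any segment, as required. For Part~3, genericity gives $\cs(\omega)=\sum_{i:\,c_i=b_i-1}\rho_i\abs{\cdot}^{b_i}\le\ext(\m)$ directly. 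The second clause is a short endpoint-propagation argument: if $\rho=\rho_i\abs{\cdot}^{b_i}\le\ext(\m)$ and $\rho\abs{\cdot}=\rho_i\abs{\cdot}^{b_i+1}\le\cs(\omega)$, then strict decrease of the $b_\bullet$ pins down the unique $j$ with $b_j=b_i+1$ and $c_j=b_i$, and the monotonicity built into the admissibility of $c$ forces $c_i\le b_i-1$, i.e.\ $\rho\le\cs(\omega)$. Part~4 is of the same flavor: two occurrences of a fixed $\rho$ in $\cs(\omega)$ must come from two distinct right pieces on the same cuspidal line, and ladder-admissibility then forces the cuspidal $\rho\abs{\cdot}$ to lie in $\cs(\omega)$ as well.

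The main obstacle is organizing the case distinctions according to which right pieces of $\m^R_c$ are empty (since then the relevant admissibility inequalities degenerate); once that combinatorial setup is in hand, each of Parts~2--4 reduces to a short chase through the admissibility inequalities on the cuts, with no genuinely new ingredient beyond the Kret--Lapid formula.
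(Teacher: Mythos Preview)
Your overall approach is exactly what the paper does: it states the lemma as ``an immediate consequence'' of the Kret--Lapid description of $J_P$ for ladder representations and gives no further proof. So deriving Parts 1--4 directly from the explicit cut description $J_P(Z(\m))=\bigoplus_c Z(\m^L_c)\otimes Z(\m^R_c)$ is the right plan, and your arguments for Parts 1, 3, and 4 are essentially correct (Part~4 in fact needs no admissibility at all: if $\rho\abs{\cdot}^x$ lies in two right pieces $[c_i{+}1,b_i]$ and $[c_j{+}1,b_j]$ with $i<j$, then $x\le b_j<b_i$ forces $x{+}1\in[c_i{+}1,b_i]\subset\cs(\omega)$).

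There is, however, a genuine error in your argument for Part~2. You set $\rho=\rho_i\abs{\cdot}^{c_i+1}$, the \emph{leftmost} element of a long right piece, and claim that admissibility forces $\rho\abs{\cdot}=\rho_i\abs{\cdot}^{c_i+2}$ to be an endpoint $b_j$. This is false already for a single segment: take $k=1$, $\Delta_1=[0,5]$, $c_1=1$. Then the right piece is $[2,5]$, your $\rho=\rho_0\abs{\cdot}^2$, and $\rho\abs{\cdot}=\rho_0\abs{\cdot}^3$ is not in $\ext(\m)=\{\rho_0\abs{\cdot}^5\}$. The admissibility of $c$ says nothing about $c_i+2$ being an endpoint.

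The fix is to work near the \emph{top} of the right pieces rather than the bottom. One clean choice: let $x$ be maximal with $\rho_0\abs{\cdot}^x\in\cs(\omega)\setminus\ext(\m)$. This set is nonempty because $\omega$ non-generic gives some right piece $[c_i{+}1,b_i]$ of length $\ge2$; if every element of $\cs(\omega)$ were an endpoint, then in particular $c_i{+}1,\dots,b_i$ would all be among the (strictly decreasing) $b_j$'s, forcing $b_{i+1}=b_i{-}1$, and the admissibility condition $c_{i+1}<c_i$ together with $b_{i+1}=b_i{-}1$ produces another long right piece one step down, leading to an infinite descent. Now $x$ lies in some $[c_i{+}1,b_i]$ with $x<b_i$, so $x{+}1\in[c_i{+}1,b_i]\subset\cs(\omega)$; by maximality of $x$, $x{+}1\in\ext(\m)$, and by construction $x\notin\ext(\m)$. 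Thus $\rho=\rho_0\abs{\cdot}^x$ works.
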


\subsection{}

From now on let $m,n\ge1$ be integers and $G=\GL_{mn}$.
For simplicity we write $\Delta_\rho=\Delta_\rho^{(m)}$.

We say that $\sigma\in\Irr G$ is \emph{\hmgns}\ if $\sigma=Z(\Delta_1+\dots+\Delta_k)$ where
each $\Delta_i$ is of length $m$.
(If $m=1$ this simply means that $\sigma$ is generic.)
For any $\pi=Z(\{\rho_1\}+\dots+\{\rho_k\})\in\Irr_{\gen}$ define
\[
\Sp(\pi,m)=Z(\Delta_{\rho_1}+\dots+\Delta_{\rho_k})\in\Irr.
\]
It is clear that the map $\pi\mapsto\Sp(\pi,m)$ defines a bijection between $\Irr_{\gen}\GL_n$ and
the set $\Irr_{\hmgnss}G$ of irreducible \hmgns\ representations of $G$.
We have $\Sp(\pi,m)^\vee=\Sp(\pi^\vee,m)$.

\begin{remark}
This notion is very close to the concept of ``representations of type $(n,m)$'' introduced in \cite{1802.02637}
except that we only consider irreducible representations.
\end{remark}

\begin{remark} \label{rem: unr}
In general, if $\pi$ is unramified (and generic) then $\Sp(\pi,m)$ is not necessarily unramified if $m>1$.
More precisely, if $\pi=Z(\{\rho_1\}+\dots+\{\rho_k\})$ is unramified (so that $\rho_i$ are unramified characters
of $F^*$ and $\rho_i\ne\rho_j\abs{\cdot}$ for all $i,j$) then $\Sp(\pi,m)$ is unramified if and only if
$\Delta_{\rho_1}^{(m)},\dots,\Delta_{\rho_k}^{(m)}$ are mutually unlinked. For instance, this is the case if $\pi$ is \AT.
\end{remark}

Suppose that $\sigma=\Sp(\pi,m)$ with $\pi\in\Irr_{\gen}\GL_n$. Then in the notation of \S\ref{sec: Psig}
$P_\sigma=P_{m,n}=M\ltimes U$ is the standard parabolic subgroup of $G$ of type $(\overbrace{n,\dots,n}^m)$,
consisting of the block upper triangular matrices with blocks of size $n\times n$.
Thus, $M\simeq\overbrace{\GL_n\times\dots\times\GL_n}^m$.

Just as in the case $m=1$, there are simple building blocks for \hmgns\ representations.

\begin{proposition}
Let $\sigma=\Sp(\pi,m)\in\Irr G$ be \hmgns. Then there exist $\pi_1,\dots,\pi_t\in\Irr_{\gen}$ such that
\begin{enumerate}
\item \label{part: sigmalad} $\sigma_i:=\Sp(\pi_i,m)$ is a ladder representation for all $i$.
\item $\Sp(\pi,l)=\Sp(\pi_1,l)\times\dots\times\Sp(\pi_t,l)$ for all $l=1,\dots,m$.
In particular, $\pi=\pi_1\times\dots\times\pi_t$ and $\sigma=\sigma_1\times\dots\times\sigma_t$.
\end{enumerate}
%%%The property \ref{part: sigmalad} and \eqref{eq: sigmaprod} determine $\pi_1,\dots,\pi_t\in\Irr_{\gen}$ uniquely.
Moreover, let $Q$ be the maximal standard parabolic subgroup of type $((m-1)n,n)$
and denote by $J_Q(\sigma)_{;\data}$ the direct summand of $J_Q(\sigma)$ pertaining to the supercuspidal data
$\data\in\mult(\Irr_{\cusp})$ in the second ($\GL_n$) factor. Then
\[
J_Q(\sigma)_{;\cs(\pi)\abs{\cdot}^{\frac{m-1}2}}=\Sp(\pi,m-1)\abs{\cdot}^{-\frac12}\otimes\pi\abs{\cdot}^{\frac{m-1}2}.
\]
\end{proposition}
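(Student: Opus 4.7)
Write $\pi = Z(\{\rho_1\} + \dots + \{\rho_k\})$. For part 1 the plan is to decompose by linking: form the graph on $\{\rho_1, \dots, \rho_k\}$ with an edge $\rho_i \sim \rho_j$ whenever $\Delta_{\rho_i}^{(m)}$ and $\Delta_{\rho_j}^{(m)}$ are linked (equivalently $\rho_j = \rho_i \abs{\cdot}^a$ with $1 \le \abs{a} \le m$), let $S_1, \dots, S_t$ be its connected components, and set $\pi_r = Z(\sum_{\rho \in S_r} \{\rho\})$, $\m_r = \sum_{\rho \in S_r} \Delta_\rho^{(m)}$, $\sigma_r = \Sp(\pi_r, m) = Z(\m_r)$. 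Inside each $S_r$ all the $\rho$'s lie on a single line with consecutive (sorted) exponents differing by at most $m$; sorting the segments of $\m_r$ by decreasing exponent exhibits $\m_r$ as a ladder, so $\sigma_r$ is a ladder representation. Across distinct components the segments are pairwise unlinked (different lines, or same line with shift $> m$), so \eqref{eq: suffcond} iteratively gives the irreducibility of $\sigma_1 \times \dots \times \sigma_t$ and its identification with $Z(\sum_r \m_r) = \Sp(\pi, m)$. For any $l \le m$ the very same unlinkedness persists across components (shifts exceed $m \ge l$), yielding $\Sp(\pi, l) = \Sp(\pi_1, l) \times \dots \times \Sp(\pi_t, l)$; specializing $l = 1$ gives $\pi = \pi_1 \times \dots \times \pi_t$.

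For part 2 the plan is to settle the ladder case first and then globalize. For a single ladder $\sigma_r$, Lemma \ref{lem: Jmlad} writes $J_{Q_r}(\sigma_r)$ (for $Q_r$ of type $((m-1) \deg \pi_r, \deg \pi_r)$) as a direct sum of irreducible tensors $\tau \otimes \omega$. In the summand with $\cs(\omega) = \cs(\pi_r)\abs{\cdot}^{(m-1)/2} = \ext(\m_r)$, the non-generic case is excluded by the lemma since a non-generic $\omega$ would require a witness cuspidal $\rho$ with $\rho \le \cs(\omega)$ and $\rho \not\le \ext(\m_r)$, impossible when $\cs(\omega) = \ext(\m_r)$; the generic part then forces $\omega = \pi_r \abs{\cdot}^{(m-1)/2}$, the unique generic representation with this cuspidal support. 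To pin down $\tau$ I would appeal to the explicit ladder Jacquet-module formula from \cite{MR2996769} underlying Lemma \ref{lem: Jmlad}: among the summands it exhibits, exactly one matches the required second-factor data, namely the one obtained by removing the top entry $\rho\abs{\cdot}^{(m-1)/2}$ from each segment $\Delta_\rho^{(m)}$ uniformly in $\rho \in S_r$, giving $\tau = Z(\sum_\rho \Delta_\rho^-) = \Sp(\pi_r, m-1)\abs{\cdot}^{-1/2}$.

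To globalize I apply the Bernstein--Zelevinsky geometric lemma to $\sigma = \sigma_1 \times \dots \times \sigma_t$ with respect to $Q$. The demand that the $\GL_n$-factor carry cuspidal data $\cs(\pi)\abs{\cdot}^{(m-1)/2} = \sum_r \cs(\pi_r)\abs{\cdot}^{(m-1)/2}$, combined with the part-1 observation that any cuspidal $\rho\abs{\cdot}^{(m-1)/2}$ with $\rho \in S_r$ appears in no $\cs(\sigma_{r'})$ for $r' \ne r$ (since within a line integer shifts of absolute value $\le m-1$ keep one inside the same component), forces each $\sigma_r$ to supply precisely $\cs(\pi_r)\abs{\cdot}^{(m-1)/2}$ to the second factor, so only the ``standard splitting'' contributes. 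Assembling with the ladder-case computation and with part 1 applied at $l = m-1$ and at $l = 1$ produces
\[
J_Q(\sigma)_{;\cs(\pi)\abs{\cdot}^{(m-1)/2}} = \Sp(\pi, m-1)\abs{\cdot}^{-1/2} \otimes \pi\abs{\cdot}^{(m-1)/2},
\]
as required. The main obstacle is the ladder-case identification of $\tau$: Lemma \ref{lem: Jmlad} by itself pins down the generic factor $\omega$ but merely constrains the cuspidal support of $\tau$, so one genuinely has to invoke the finer ladder Jacquet-module formula of \cite{MR2996769}; by contrast, the combinatorics of the components $S_r$ and the geometric-lemma bookkeeping reduce to the elementary unlinkedness observation already exploited in part 1.
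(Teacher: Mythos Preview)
Your argument has a genuine gap in Part 1: the connected-component decomposition breaks down as soon as the cuspidal support of $\pi$ has multiplicities. Take $\pi=Z(\{\rho\}+\{\rho\}+\{\rho\abs{\cdot}\})$. On the index set $\{1,2,3\}$ your graph has edges $1\sim3$ and $2\sim3$ (since $\Delta_\rho^{(m)}$ and $\Delta_{\rho\abs{\cdot}}^{(m)}$ are linked) but no edge $1\sim2$ (identical segments are unlinked); the single connected component produces $\m_1=2\Delta_\rho+\Delta_{\rho\abs{\cdot}}$, which is \emph{not} a ladder. If instead you form the graph on the underlying set of distinct $\rho$'s, you lose a copy of $\rho$ and no longer recover $\pi$ as the product of the $\pi_r$. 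Either way the construction fails. The paper's fix is more delicate: it partitions the index set into maximal \emph{$m$-chains} (subsets whose sorted $\rho$'s have strictly increasing exponents with consecutive gaps in $\{1,\dots,m\}$), allowing the resulting parts to \emph{nest} in one another; irreducibility of the product then comes not only from \eqref{eq: suffcond} but crucially from Lemma~\ref{lem: irredlad}\,(2), which handles nested ladders. In the example above the paper takes $J_1=\{1,3\}$, $J_2=\{2\}$, so $\sigma_1=Z(\Delta_\rho+\Delta_{\rho\abs{\cdot}})$ and $\sigma_2=Z(\Delta_\rho)$ are ladders, and $\sigma_1\times\sigma_2$ is irreducible because every segment of $\m_2$ occurs in $\m_1$.

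This gap propagates to your Part 2. Once the correct decomposition is in place the parts can share cuspidal values, so your key disjointness claim (that $\rho\abs{\cdot}^{(m-1)/2}$ with $\rho\in S_r$ never appears in $\cs(\sigma_{r'})$ for $r'\ne r$) is simply false; in the example above the endpoint of $\Delta_\rho\in\m_2$ is $\rho\abs{\cdot}^{(m-1)/2}$, which certainly lies in $\cs(\sigma_1)$. Hence ``only the standard splitting contributes'' does not follow from a support argument alone. The paper instead argues by induction on $\deg\pi$: it first uses Lemma~\ref{lem: Jmlad}\,\ref{part: nongen} to force $\omega_J$ to be generic for \emph{maximal} chains $J$, then combines parts~\ref{part: gen} and~\ref{part: atmostonce} of that lemma with the constraint \eqref{eq: suppeq} to pin down $\cs(\omega_J)$ for a maximal $J$, and peels that factor off. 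Your ladder-case computation of $\tau$ via \cite{MR2996769} is fine and agrees with the paper, but the globalization step needs this more careful inductive bookkeeping.
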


\begin{proof}
Write $\pi=Z(\sum_{i\in I}\{\rho_i\})$ with $\rho_i\in\Irr_{\cusp}$ and let $l\ge1$.
We say that a subset $J$ of $I$ is an $l$-chain if it can be written (uniquely) as
$J=\{i_1,\dots,i_r\}$ where for all $j=1,\dots,r-1$
we have $\rho_{i_j}=\rho_{i_{j+1}}\abs{\cdot}^{\alpha_j}$ with $\alpha_j\in\{1,\dots,l\}$.
(For example, a $1$-chain is simply a segment.)
Clearly, $J$ is an $l$-chain if and only if $Z(\sum_{j\in J}\Delta_{\rho_j}^{(l)})$ is a ladder representation.

We say that two partitions of $I$ are equivalent if one can be obtained from the other by applying a permutation $\tau$ of $I$
such that $\rho_{\tau(i)}=\rho_i$ for all $i$.
It is easy to see that for any $l\ge 1$ there exists a partition $\prt^{(l)}(I)$ of $I$
consisting of $l$-chains, such that for any $J,J'\in\prt^{(l)}(I)$ at least one of the following conditions holds.
\begin{enumerate}
\item $\{\rho_j:j\in J\}\subset\{\rho_j:j\in J'\}$.
\item $\{\rho_j:j\in J'\}\subset\{\rho_j:j\in J\}$.
\item For every $j\in J$ and $j'\in J'$ the segments $\Delta_{\rho_j}^{(l)}$ and $\Delta_{\rho_{j'}}^{(l)}$ are unlinked.
\end{enumerate}
Moreover, $\prt^{(l)}(I)$ is unique up to equivalence.
Indeed, $\prt^{(l)}(I)$ can be defined inductively by taking a maximal $l$-chain $J$ of $I$ (with respect to inclusion)
together with the partition $\prt^{(l)}(I\setminus J)$.
It follows from this description that up to equivalence, $\prt^{(l_1)}(I)$ is a refinement of $\prt^{(l_2)}(I)$ if $l_2\ge l_1$.

For any $J\subset I$ let $\pi_J=Z(\sum_{j\in J}\{\rho_j\})\in\Irr_{\gen}$ and $\sigma_J=\Sp(\pi_J,m)$.
Then $\sigma_J$ is a (\hmgns) ladder representation for any $J\in\prt^{(m)}(I)$.
It follows from the defining property of $\prt^{(m)}(I)$, \eqref{eq: suffcond} and Lemma \ref{lem: irredlad}
that $\bigtimes_{J\in\prt^{(m)}(I)}\sigma_J$ is irreducible, hence equals $\sigma$.

Likewise, for any $l\le m$ we have $\Sp(\pi,l)=\bigtimes_{J'\in\prt^{(l)}(I)}\Sp(\pi_{J'},l)$.
Since $\Sp(\pi_J,l)=\bigtimes_{J'\in\prt^{(l)}(I):J'\subset J}\Sp(\pi_{J'},l)$ for all $J\in\prt^{(m)}(I)$
we infer that $\Sp(\pi,l)=\bigtimes_{J\in\prt^{(m)}(I)}\Sp(\pi_J,l)$.
In particular, $\pi=\times_{J\in\prt^{(m)}(I)}\pi_J$.

By \cite{MR2996769} we have
\[
\Sp(\pi_J,m-1)\abs{\cdot}^{-\frac12}\otimes\pi_J\abs{\cdot}^{\frac{m-1}2}\lunq J_{((m-1)n_J,n_J)}(\sigma_J)
\]
for all $J\in\prt^{(m)}(I)$ where $n_J=\deg\pi_J$. Therefore,
\begin{multline*}
\Sp(\pi,m-1)\abs{\cdot}^{-\frac12}\otimes\pi\abs{\cdot}^{\frac{m-1}2}=
\\\times_{J\in\prt^{(m)}(I)}\Sp(\pi_J,m-1)\abs{\cdot}^{-\frac12}\otimes
\times_{J\in\prt^{(m)}(I)}\pi_J\abs{\cdot}^{\frac{m-1}2}\le J_Q(\sigma).
\end{multline*}
On the other hand, suppose that $\tau_J,\omega_J\in\Irr$ with $\tau_J\otimes\omega_J\le J_P(\sigma_J)$, $J\in\prt^{(m)}(I)$ and
\begin{equation} \label{eq: suppeq}
\sum_{J\in\prt^{(m)}(I)}\cs(\omega_J)=\cs(\pi)\abs{\cdot}^{\frac{m-1}2}.
\end{equation}
We claim that this is possible only if
$\tau_J=\Sp(\pi_J,m-1)\abs{\cdot}^{-\frac12}$ and $\omega_J=\pi_J\abs{\cdot}^{\frac{m-1}2}$
for all $J$. We prove it by induction on $\deg\pi$. The base of the induction is trivial.
For the induction step, it is enough to prove that if $J$ is a maximal $m$-chain then $\omega_J=\pi_J\abs{\cdot}^{\frac{m-1}2}$.
We use Lemma \ref{lem: Jmlad}.
By part \ref{part: nongen}, if $J$ is a maximal $m$-chain then $\omega_J$ is generic.
For otherwise, there would exist $i\in I$ such that $\rho_i\notin\{\rho_j:j\in J\}$
but $\rho_i\abs{\cdot}\in\{\rho_j:j\in J\}$ in contradiction to the maximality of $J$.
On the other hand, by part \ref{part: atmostonce} if $\rho\abs{\cdot}\not\le\cs(\pi)\abs{\cdot}^{\frac{m-1}2}$
then $\rho$ can occur in $\cs(\omega_J)$ at most once for any $J\in\prt^{(m)}(I)$.
It follows from \eqref{eq: suppeq} that if $\rho\abs{\cdot}\not\le\cs(\pi)\abs{\cdot}^{\frac{m-1}2}$ then
$\rho\le\cs(\omega_J)$ if and only if  $\rho=\rho_j\abs{\cdot}^{\frac{m-1}2}$ for some $j\in J$.
By part \ref{part: gen} it now follow that if $J$ is a maximal $m$-chain then
$\cs(\omega_J)=\sum_{j\in J}\{\rho_j\abs{\cdot}^{\frac{m-1}2}\}$ as required.

This finishes the proof.
\end{proof}

\begin{remark}
It can be shown that up to permutation, $\sigma_1,\dots,\sigma_t$ are the unique ladder representations
such that $\sigma=\sigma_1\times\dots\times\sigma_t$. We will not need to use this fact.
\end{remark}

By Frobenius reciprocity and \cite[Corollary 4.10]{MR3573961} we conclude
\begin{corollary} \label{cor: indSp}
For any $\pi\in\Irr_{\gen}\GL_n$,
\[
\Sp(\pi,m)=\soc(\Sp(\pi,m-1)\abs{\cdot}^{-\frac12}\times\pi\abs{\cdot}^{\frac{m-1}2})\lunq
\Sp(\pi,m-1)\abs{\cdot}^{-\frac12}\times\pi\abs{\cdot}^{\frac{m-1}2}.
\]
\end{corollary}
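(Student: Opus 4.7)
The plan is to derive the corollary as a direct consequence of the Jacquet module calculation in the preceding Proposition, combined with Frobenius reciprocity and the cited result from the ladder representations paper.

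First, I would invoke the key output of the Proposition, namely that
\[
J_Q(\Sp(\pi,m))_{;\cs(\pi)\abs{\cdot}^{\frac{m-1}2}} = \Sp(\pi,m-1)\abs{\cdot}^{-\frac12}\otimes\pi\abs{\cdot}^{\frac{m-1}2}.
\]
In particular there is a nonzero (in fact surjective onto this summand) $M$-equivariant map from $J_Q(\Sp(\pi,m))$ to $\Sp(\pi,m-1)\abs{\cdot}^{-\frac12}\otimes\pi\abs{\cdot}^{\frac{m-1}2}$. By Frobenius reciprocity this corresponds to a nonzero $G$-map
\[
\Sp(\pi,m) \to \Sp(\pi,m-1)\abs{\cdot}^{-\frac12}\times\pi\abs{\cdot}^{\frac{m-1}2}.
\]
Since $\Sp(\pi,m)$ is irreducible this map is an embedding, giving $\Sp(\pi,m)\subset\soc(\Sp(\pi,m-1)\abs{\cdot}^{-\frac12}\times\pi\abs{\cdot}^{\frac{m-1}2})$.

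For the reverse inclusion and the $\lunq$ statement, I would appeal to \cite[Corollary 4.10]{MR3573961}. In the $m=1$ case this is vacuous, and for $m>1$ we use the decomposition from the proposition: since $\Sp(\pi,m)=\sigma_1\times\dots\times\sigma_t$ and $\Sp(\pi,m-1)\abs{\cdot}^{-\frac12}\times\pi\abs{\cdot}^{\frac{m-1}2}$ factors through the corresponding ladder pieces $\Sp(\pi_i,m-1)\abs{\cdot}^{-\frac12}\times\pi_i\abs{\cdot}^{\frac{m-1}2}$, the multiplicity-one statement reduces to the ladder case already treated in \cite{MR3573961}. The socle being irreducible follows from the same reference (ladder induced representations have irreducible socle).

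The main thing to be careful about is the reduction from the \hmgns\ case to the ladder case: one needs to check that taking products is compatible with socle and with the $\lunq$ relation, using the irreducibility of the relevant product representations guaranteed by Lemma \ref{lem: irredlad} and \eqref{eq: suffcond}. This is the only nontrivial step beyond a direct citation; the rest is a formal translation of the Jacquet module computation into a submodule statement via Frobenius reciprocity.
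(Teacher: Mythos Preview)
Your proposal is correct and follows essentially the same approach as the paper: the paper's proof is the single line ``By Frobenius reciprocity and \cite[Corollary 4.10]{MR3573961} we conclude,'' and you have faithfully unpacked both ingredients. One minor remark: the extra paragraph about reducing to the ladder case may be more than is needed, since \cite[Corollary 4.10]{MR3573961} applies directly to products of ladder representations, and the preceding Proposition already exhibits $\Sp(\pi,m-1)\abs{\cdot}^{-\frac12}\times\pi\abs{\cdot}^{\frac{m-1}2}$ as such a product (each $\Sp(\pi_i,m-1)$ is a ladder and $\pi=\pi_1\times\dots\times\pi_t$ with each $\pi_i$ generic, hence itself a product of essentially square-integrable, i.e.\ single-segment ladder, representations).
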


By induction on $m$ we get
\begin{corollary} \label{cor: substd}
For any $\pi\in\Irr_{\gen}\GL_n$, $\Sp(\pi,m)$ is a subrepresentation of
\[
\Pi:=\pi\abs{\cdot}^{\frac{1-m}2}\times\pi\abs{\cdot}^{\frac{3-m}2}\times\dots\times\pi\abs{\cdot}^{\frac{m-1}2}.
\]
Equivalently (by passing to the contragredient), $\Sp(\pi,m)$ is a quotient of
\[
\tilde\Pi:=\pi\abs{\cdot}^{\frac{m-1}2}\times\pi\abs{\cdot}^{\frac{m-3}2}\times\dots\times\pi\abs{\cdot}^{\frac{1-m}2}.
\]
\end{corollary}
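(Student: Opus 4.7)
The plan is to induct on $m$, with Corollary \ref{cor: indSp} supplying the key inductive input.

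The base case $m=1$ is immediate since $\Sp(\pi,1)=\pi=\Pi$. For the inductive step, the hypothesis provides an embedding
\[
\Sp(\pi,m-1)\hookrightarrow\pi\abs{\cdot}^{\frac{2-m}2}\times\pi\abs{\cdot}^{\frac{4-m}2}\times\dots\times\pi\abs{\cdot}^{\frac{m-2}2}.
\]
Twisting by $\abs{\cdot}^{-\frac12}$ shifts every exponent down by $\frac12$, and taking the product with $\pi\abs{\cdot}^{\frac{m-1}2}$ on the right (using exactness of normalized parabolic induction) yields
\[
\Sp(\pi,m-1)\abs{\cdot}^{-\frac12}\times\pi\abs{\cdot}^{\frac{m-1}2}\hookrightarrow\Pi.
\]
Composing this with the embedding $\Sp(\pi,m)\hookrightarrow\Sp(\pi,m-1)\abs{\cdot}^{-\frac12}\times\pi\abs{\cdot}^{\frac{m-1}2}$ supplied by Corollary \ref{cor: indSp} completes the induction step.

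The quotient statement is obtained by applying the embedding just proved with $\pi^\vee$ in place of $\pi$ and then passing to smooth contragredients. Using $\Sp(\pi^\vee,m)^\vee=\Sp(\pi,m)$ together with the standard compatibility of the smooth contragredient with normalized parabolic induction (the contragredient of $\pi_1\times\dots\times\pi_k$ is $\pi_1^\vee\times\dots\times\pi_k^\vee$ up to reindexing of factors), one identifies the contragredient of $\Pi$ for $\pi^\vee$ with $\tilde\Pi$ for $\pi$, so that $\Sp(\pi,m)$ appears as a quotient of $\tilde\Pi$.

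No serious obstacle is expected here; the argument is a matter of careful exponent bookkeeping combined with the exactness of parabolic induction and the inductive embedding from Corollary \ref{cor: indSp}, and the equivalence of the two formulations is a purely formal consequence of duality.
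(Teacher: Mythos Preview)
Your argument is correct and is exactly the induction on $m$ via Corollary~\ref{cor: indSp} that the paper indicates (the paper simply writes ``By induction on $m$ we get'' before stating the corollary). Your exponent bookkeeping and the duality argument for the quotient statement are both fine; the only minor remark is that for $\GL_n$ one has $(\pi_1\times\dots\times\pi_k)^\vee\simeq\pi_1^\vee\times\dots\times\pi_k^\vee$ without any reindexing, so the identification of the contragredient with $\tilde\Pi$ is immediate.
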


\begin{remark}
If $\pi$ is \AT\ then $\Sp(\pi,m)$ (known as ``Speh representation'') is the Langlands quotient of $\tilde\Pi$.
In particular, $\Sp(\pi,m)$ is the image of the standard intertwining operator from $\tilde\Pi$ to $\Pi$.
Moreover, $\Sp(\pi,m)=\soc(\Pi)\lunq\Pi$.
However, in general for $m>2$ and $\pi\in\Irr\GL_n$ it is not true that $\soc(\Pi)\lunq\Pi$.
For instance, if $\pi=\abs{\cdot}\times\abs{\cdot}^{-1}\in\Irr_{\gen}\GL_2$ then $\Sp(\pi,3)$
occurs with multiplicity two in the Jordan--H\"older sequence of
$\pi\abs{\cdot}^{-1}\times\pi\times\pi\abs{\cdot}$.
Note that in this case we still have $\Sp(\pi,m)=\soc(\Pi)$.
We do not know whether this holds in general, i.e., whether $\soc(\Pi)$ is irreducible.
\end{remark}

\section{The models} \label{sec: models}
\subsection{}
Throughout this section, fix $\pi\in\Irr_{\gen}\GL_n$ and let $\sigma=\Sp(\pi,m)\in\Irr G$ and
$P=P_\sigma=P_{m,n}=M\ltimes U$ the standard parabolic subgroup of $G$ of type $\overbrace{(n,\dots,n)}^m$.
Let $\bar U=\,^tU$ be the opposite of $U$.
Fix a non-trivial character $\psi$ of $F$. Let $\fpsi$ be the function on $G$ given by
\[
\fpsi(g)=\psi(\sum_{1\le i<nm:n\nmid i}g_{i,i+1}).
\]
We denote the restriction of $\fpsi$ to a subset $A$ of $G$ by $\fpsi_A$.
Let $N=N_{m,n}$ (resp., $\bar N=\,^tN$) be the group of upper (resp., lower) unitriangular matrices in $G$.
Then $\fpsi_N$ is a (degenerate) character on $N$ which is trivial on $U$ and non-degenerate on $N_M:=N\cap M$.
Recall that $\Hom_G(\sigma,\Ind_N^G\fpsi_N)$ is one-dimensional.

Denote by $\Whit^{\fpsi_N}(\sigma)$ the image of $\sigma$ in $\Ind_N^G\fpsi_N$, i.e.~the Zelevinsky model of $\sigma$.
By Corollaries \ref{cor: indSp} and \ref{cor: substd}, for any $\typZe\in\Whit^{\fpsi_N}(\sigma)$ we have
\begin{subequations}
\begin{equation} \label{eq: maxrest}
(\abs{\det}^{\frac{1-n}2}\otimes\abs{\det}^{(m-1)(n-1)/2})\typZe\rest_{\GL_{(m-1)n}\times\GL_n}
\in\Whit^{\fpsi_{N\cap(\GL_{(m-1)n}\times\GL_n)}}(\Sp(\pi,m-1)\otimes\pi),
\end{equation}
\begin{equation} \label{eq: restW}
\delta_P^{-\frac12}\delta'\typZe\rest_M\in\Whit^{\fpsi_{N_M}}(\pi^{\otimes m}),
\end{equation}
\end{subequations}
where $\pi^{\otimes m}=\overbrace{\pi\otimes\dots\otimes\pi}^m$, $\delta_P$ is the modulus character of $P$
and $\delta'=\delta_P^{\frac1{2n}}$ is the character of $M$ given by
\[
\delta'(\diag(g_1,\dots,g_m))=\abs{\det g_1}^{\frac{m-1}2}\abs{\det g_2}^{\frac{m-3}2}\dots\abs{\det g_m}^{\frac{1-m}2}.
\]

The model $\Whit^{\fpsi_N}(\sigma)$ is a particular case of more general models considered in \cite{MR913667}
(for any reductive group). Let us recall the setup.
Let $\Lieg=\Mat_{nm,nm}$ be the Lie algebra of $G$ over $F$.
For any co-character $\varphi$ of the diagonal torus $T$ let $\Lieg=\oplus_{j\in\Z}\Lieg^\varphi_j$
be the corresponding grading
\[
\Lieg^\varphi_j=\{X\in\Lieg:\Ad(\varphi(s))X=s^jX\}
\]
and let $\Lieg^\varphi_{\ge j}=\oplus_{k\ge j}\Lieg^\varphi_k$, $j\in\Z$ be the corresponding filtration.
Let $P_\varphi$ be the semistandard parabolic subgroup such that $\Lie P_\varphi=\Lieg^\varphi_{\ge0}$.
Then $P_\varphi=M_\varphi\ltimes U_\varphi$ where $M_\varphi$ is the centralizer of $\varphi$,
$\Lie M_\varphi=\Lieg^\varphi_0$ and $\Lie U_\varphi=\Lieg^\varphi_{>0}$.
Concretely, if $\varphi(s)=\diag(s^{\lambda_1^\varphi},\dots,s^{\lambda_{mn}^\varphi})$
where $(\lambda_1^\varphi,\dots,\lambda_{mn}^\varphi)\in\Z^{mn}$ then
\begin{align*}
P_\varphi&=\{g\in G:g_{i,j}=0\text{ if }\lambda_i<\lambda_j\},\\
M_\varphi&=\{g\in G:g_{i,j}=0\text{ if }\lambda_i\ne\lambda_j\},\\
U_\varphi&=\{g\in G:g_{i,j}=\delta_{i,j}\text{ if }\lambda_i\le\lambda_j\}.
\end{align*}
Consider the nilpotent $nm\times nm$ matrix $J_{m,n}$ consisting of $m$ lower triangular Jordan blocks of size $n\times n$ each.
We say that $\varphi$ is of type $(m,n)$ if $\Ad(\varphi(s))J_{m,n}=s^{-1}J_{m,n}$, or equivalently,
if $\lambda_i^\varphi-\lambda_{i+1}^\varphi=1$ for all $i$ not dividing $n$.
If $\varphi$ is of type $(m,n)$ then $\fpsi_{U_\varphi}$ is a character of $U_\varphi$ and
by \cite{MR913667} (in particular, \S II.1) $\Hom_{U_\varphi}(\sigma,\fpsi_{U_\varphi})$ is
one-dimensional.\footnote{This is the only place in the paper where we use that $F$ is of characteristic $0$. This
assumption can probably be lifted.}
In fact, in the notation of \cite{MR913667} this is the model pertaining to the pair $(\varphi^2,J_{m,n})$.
(The more general context of \cite{MR913667} applies to cocharacters which are not necessarily even.
However, we will not discuss them here.)
We denote by $\Whit^{\fpsi_{U_\varphi}}(\sigma)$ the image of $\sigma$ in $\Ind_{U_\varphi}^G\fpsi_{U_\varphi}$.

Clearly, any $\varphi$ of type $(m,n)$ is determined by the $m$-tuple $(\lambda_n^\varphi,\lambda_{2n}^\varphi,\dots,\lambda_{mn}^\varphi)$.
We consider $(m-1)n+1$ co-characters $\varphi_0,\dots,\varphi_{(m-1)n}$ of $T$ of type $(m,n)$ such that
$\lambda_{nk}^{\varphi_i}-\lambda_{n(k+1)}^{\varphi_i}=\max(0,nk-i)$, $k=1,\dots,m-1$.
(Up to a cocharacter of the center of $G$, the cocharacter $\varphi_{(m-1)n}^2$ corresponds to the
$\operatorname{SL}_2$-triple pertaining to $J_{m,n}$.)
For simplicity we write $P_i=P_{\varphi_i}$, $M_i=M_{\varphi_i}$, $U_i=U_{\varphi_i}$.
If $i=nd+r$ where $d=\lfloor\frac in\rfloor$ and $0\le r<n$ then $U_i$ consists of the matrices whose $n\times n$ blocks $A_{j,k}$ satisfy
\begin{enumerate}
\item $A_{j,j}$ is upper unitriangular for all $j=1,\dots,m$.
\item $A_{j,k}$ is strictly upper triangular if $j\ne k$ and $j,k\le d+1$.
\item For any $k<d+2$, $(A_{d+2,k})_{a,b}=0$ if $b-a\le n-r$ and $(A_{k,d+2})_{a,b}=0$ if $a-b\ge n-r$.
\item $A_{j,k}=0$ if $j>k$ and $j>d+2$.
\end{enumerate}
(There is no constraint on $A_{j,k}$ if $j<k$ and $d+2<k$.)

In particular, $U_0=N$ while $U_{(m-1)n}$ consists of the matrices
whose difference from the identity matrix is strictly upper triangular in each $n\times n$ block.
Also, $U_{i+1}\cap N\subset U_i\cap N$ and $U_i\cap\bar N\subset U_{i+1}\cap\bar N$ for all $i$.

For brevity we write $P'=P_{(m-1)n}$, $M'=M_{(m-1)n}\simeq\overbrace{\GL_m\times\dots\times\GL_m}^n$, $U'=U_{(m-1)n}$.
In analogy with the case $m=2$ we will refer to $\Whit^{\fpsi_{U'}}(\sigma)$ as the \emph{Shalika model}.
(In general, $\Hom_{U'}(\tau,\fpsi_{U'})$ is infinite-dimensional for $\tau\in\Irr G$.)

Letting $G$ act on right on the vector space $F^{mn}$ of row vectors with standard basis $e_1,\dots,e_{mn}$,
$P'$ is the stabilizer of the flag
\[
(\spn\{e_{nj-k}:j=1,\dots,m,\ k=0,\dots,i-1\})_{i=0,\dots,n}.
\]

We denote by $\mcp:\overbrace{\GL_m\times\dots\times\GL_m}^n\rightarrow M'$ the isomorphism such that
the $i$-th copy of $\GL_m$ acts on $\spn\{e_{nj+i}:j=0,\dots,m-1\}$.

If $X$ is a matrix over $F$ then we write $\norm{X}$ for the maximum of the absolute value of its entries.

\begin{lemma} \label{lem: suppWsh}
Suppose that $\typSh\in\Whit^{\fpsi_{U'}}(\sigma)$. Then there exists $C>0$ with the following property.
Suppose that $g\in G$ with $\typSh(g)\ne0$.
Write $g=u'l'k$ where $u'\in U'$, $l'=\mcp(g_1,\dots,g_n)\in M'$ and $k\in G(\OOO)$.
Then $\norm{g_{i+1}^{-1}g_i}\le C$ for all $i<n$.
\end{lemma}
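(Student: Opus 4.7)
The plan is to combine the right-smoothness of $\typSh$ with the left $U'$-equivariance under $\fpsi$. Fix a uniformizer $\varpi$ of $F$ and choose $N\ge1$ large enough that $\typSh$ is right-invariant under the principal congruence subgroup $K(N):=I+\varpi^N\Mat_{mn}(\OOO)$, which is normal in $G(\OOO)$. The standard argument---applying equivariance to $\tilde u g$ (for $\tilde u:=u'uu'^{-1}\in U'$) and right invariance to $gh$ (for $h=g^{-1}\tilde u g$), combined with $\fpsi(\tilde u)=\fpsi(u)$ and the normality of $K(N)$---shows that $\typSh(g)\ne0$ forces $\fpsi(u)=1$ for every $u\in U'$ with $l'^{-1}ul'\in K(N)$.

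Next I identify $U'$ in a block form adapted to $\mcp$. Reorder the standard basis by grouping the subspaces $V_a=\spn\{e_{n(j-1)+a}:j=1,\dots,m\}$ in the order $a=1,\dots,n$, so that $l'=\mcp(g_1,\dots,g_n)$ becomes the block diagonal matrix $\diag(g_1,\dots,g_n)$ with blocks of size $m$. Unpacking the description of $U_{(m-1)n}$ shows that $U'$ corresponds in this reordering to the block upper unitriangular matrices with $m\times m$ blocks, the above-diagonal blocks $X_{a,b}$ being arbitrary, and conjugation by $l'$ acts block-wise as $X_{a,b}\mapsto g_a^{-1}X_{a,b}g_b$. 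The character $\fpsi|_{U'}$ depends only on the immediately-above-diagonal blocks and equals $\psi\bigl(\sum_{a=1}^{n-1}\tr X_{a,a+1}\bigr)$.

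Fix $i\in\{1,\dots,n-1\}$ and take $u\in U'$ with $X_{i,i+1}=X$ arbitrary and all other above-diagonal blocks zero. Then the condition $l'^{-1}ul'\in K(N)$ becomes $g_i^{-1}Xg_{i+1}\in\varpi^N\Mat_m(\OOO)$, and $\fpsi(u)=\psi(\tr X)$. Hence we obtain the implication $g_i^{-1}Xg_{i+1}\in\varpi^N\Mat_m(\OOO)\implies\psi(\tr X)=1$ for all $X\in\Mat_m(F)$. Substituting $X=g_iYg_{i+1}^{-1}$ and using $\tr(g_iYg_{i+1}^{-1})=\tr(YH)$ with $H:=g_{i+1}^{-1}g_i$, this says $\psi(\tr(YH))=1$ for every $Y\in\varpi^N\Mat_m(\OOO)$. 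Testing against matrix units $Y=\varpi^N E_{p,q}$ then bounds each entry of $H$ in terms of the conductor of $\psi$ and $N$, yielding a uniform bound on $\norm{g_{i+1}^{-1}g_i}$ independent of $g$.

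The main obstacle is the combinatorial step of verifying that $U_{(m-1)n}$ becomes block upper unitriangular in the $V$-basis and that conjugation by $l'$ acts on above-diagonal blocks by $X_{a,b}\mapsto g_a^{-1}X_{a,b}g_b$; once this structural claim is established the rest is Fourier duality for the trace pairing on $\Mat_m(F)$.
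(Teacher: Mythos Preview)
Your argument is correct and is essentially the paper's own proof. The paper reduces to $g\in M'$ (absorbing $u'$ and $k$), multiplies on the right by a small $Y\in U'$, and conjugates to the left to obtain $\fpsi(gYg^{-1})=\psi(\tr g_iXg_{i+1}^{-1})$; you instead keep $g=u'l'k$, multiply on the left by $u\in U'$, and conjugate to the right into $K(N)$---these are the same idea run in opposite directions, and your reordered-basis description of $U'$ is exactly the structural fact underlying the paper's direct computation in the standard basis.
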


\begin{proof}
It is enough to consider the case $g\in M'$. Assume that $\typSh(g)\ne0$. Fix $1\le i<n$. For any $X\in\Mat_{m,m}(F)$ let $Y\in U'$
be the matrix such that $Y_{nj+i,nk+i+1}=X_{j+1,k+1}$ for all $0\le j,k<n$ and all other non-diagonal
entries of $Y$ are zero. Then $\typSh(gY)=\psi(\tr g_iXg_{i+1}^{-1})\typSh(g)$.
It follows that there exists $C_1>0$ depending only on $\typSh$
such that if $\typSh(g)\not=0$ then $\psi(\tr g_iXg_{i+1}^{-1})=\psi(\tr g_{i+1}^{-1}g_iX)=1$ for all $X\in\Mat_{m,m}(F)$
with $\norm{X}\le C_1$. The lemma follows.
\end{proof}

\subsection{}
We denote by $M_i^{\fpsi}$ (resp., $P_i^{\fpsi}=M_i^{\fpsi}\ltimes U_i$) the stabilizer of $\fpsi_{U_i}$ in $M_i$
(resp., $P_i$). (Note that $M_i$ determines $i$ so this
notation is unambiguous.) We also write $M'^{\fpsi}=M_{(m-1)n}^{\fpsi}$ and $P'^{\fpsi}=P_{(m-1)n}^{\fpsi}=
M'^{\fpsi}\ltimes U'$. Note that $P'^{\fpsi}$ is unimodular.
Explicitly, $M'^\fpsi$ is the image under $\mcp$ of $\GL_m$ diagonally embedded in $\overbrace{\GL_m\times\dots\times\GL_m}^n$.
It consists of the matrices in $G$ whose $n\times n$ blocks are all scalar matrices.
Let $\iota:\GL_m\rightarrow M'^\fpsi$ be the resulting identification.

In general, write $i=nd+r$, $0\le r<n$. Then the reductive part of $M_i^{\fpsi}$ is the image under $\iota$ of the subgroup
\[
\{\diag(l,t_{d+2},\dots,t_m):l\in\GL_{d+1},t_{d+2},\dots,t_m\in F^*\}.
\]
The unipotent radical of $M_i^{\fpsi}$ consists of the matrices whose $n\times n$ blocks $A_{j,k}$ satisfy
\begin{enumerate}
\item $A_{j,j}=I_n$ for all $j$.
\item If $j\ne k$ then $A_{j,k}=0$ unless $k=d+2$ and $j<k$ in which case $(A_{j,k})_{a,b}=0$ unless $a-b=n-r$.
Moreover, all the entries of $A_{j,k}$ on the diagonal $a-b=n-r$ coincide.
\end{enumerate}
It is trivial if $i$ is divisible by $n$ and it is of dimension $d+1$ otherwise.

\begin{lemma} \label{lem: ds}
Let $0\le i<(m-1)n$. Then
\begin{enumerate}
\item The commutator $[U_i,U_{i+1}]$ is contained in $U_i\cap U_{i+1}$.
Thus, $U_i\cdot U_{i+1}$ is a subgroup of $G$ which contains $U_i$ and $U_{i+1}$ as normal subgroups
and the quotients $U_iU_{i+1}/U_i\simeq U_{i+1}/U_i\cap U_{i+1}$ and $U_iU_{i+1}/U_{i+1}\simeq U_i/U_i\cap U_{i+1}$ are abelian.
Moreover,
\begin{equation} \label{eq: Uii}
U_{i+1}=(M_i\cap U_{i+1})\ltimes (U_i\cap U_{i+1})\text{ and }U_i=(M_{i+1}\cap U_i)\ltimes (U_i\cap U_{i+1}).
\end{equation}
\item We have a short exact sequence
\[
\begin{CD}
0 @>>> M_{i+1}^\fpsi\cap U_i @>>> U_i/U_i\cap U_{i+1} @>\cmap_i>> \PD(U_{i+1}/U_i\cap U_{i+1}) @>>> 0 \\
@. @. @| @| \\
@. @. M_{i+1}\cap U_i @>\cmap_i>> \PD(M_i\cap U_{i+1})
\end{CD}
\]
where $\cmap_i$ denotes the map $u\mapsto\fpsi([\cdot,u])$ and $\PD$ denotes the Pontryagin dual. Dually,
\[
\begin{CD}
0 @>>> U_{i+1}/U_i\cap U_{i+1} @>\cmap'_i>> \PD(U_i/U_i\cap U_{i+1}) @>>> \PD(M_{i+1}^\fpsi\cap U_i) @>>> 0 \\
@. @| @| \\
@. M_i\cap U_{i+1} @>\cmap'_i>> \PD(M_{i+1}\cap U_i)
\end{CD}
\]
where $\cmap_i'$ is defined by the same formula as $\cmap_i$.
\end{enumerate}
\end{lemma}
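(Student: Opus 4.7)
My plan is to argue at the level of root spaces for the diagonal torus $T$ of $G$. For each $j$ the Lie algebra $\Lie U_j$ decomposes as a direct sum of root spaces $\Lieg_\alpha$ over roots $\alpha$ of $G$ with $\langle \alpha, \varphi_j \rangle > 0$, and analogously for $P_j$ (with $\ge 0$) and $M_j$ (with $= 0$). The pivotal intermediate claim I will establish is that $U_i \subset P_{i+1}$ and $U_{i+1} \subset P_i$, or equivalently, no root $\alpha$ has strictly opposite signs of $\langle \alpha, \varphi_i \rangle$ and $\langle \alpha, \varphi_{i+1} \rangle$. The inclusion $U_{i+1} \subset P_i$ will follow immediately from the inequality $\lambda_j^{\varphi_i} \ge \lambda_j^{\varphi_{i+1}}$ for all $j$, which one reads directly off the defining recursion. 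The reverse $U_i \subset P_{i+1}$ is more delicate: writing $\alpha = e_j - e_{j'}$ with block indices $k, k'$ and within-block indices $s, s'$, I would bound the difference $\langle \alpha, \varphi_i \rangle - \langle \alpha, \varphi_{i+1} \rangle = \#\{l \in [k, k'-1] : l \ge d+1\}$ against the corresponding lower bound for $\langle \alpha, \varphi_i \rangle$ and verify that no sign flip can occur; the key ingredient is the specific quadratic growth of the cocharacter weights in the type-$(m,n)$ setting.

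Once these inclusions are in hand, $[U_i, U_{i+1}] \subset U_i \cap U_{i+1}$ will follow by summing weights, and the remaining assertions of Part 1 are then formal consequences of the commutator relation. For the decompositions~\eqref{eq: Uii}, I would note that the roots of $U_{i+1}$ partition according to whether $\langle \cdot, \varphi_i \rangle$ equals zero or is positive, yielding the Lie algebras of $M_i \cap U_{i+1}$ and $U_i \cap U_{i+1}$ respectively; this splits $U_{i+1}$ as a semidirect product with $U_i \cap U_{i+1}$ normal (by the commutator relation). Abelianness of the quotient $U_{i+1}/(U_i \cap U_{i+1}) \simeq M_i \cap U_{i+1}$ can be verified by examining its roots explicitly: each links a position in block $d+2$ (where $i = nd + r$) to one in an earlier block, and pairwise sums of such roots involve four distinct standard basis vectors, hence are never roots.

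For Part 2, the map $\cmap_i(u)(v) := \fpsi([v, u])$ will descend to a well-defined homomorphism $U_i/(U_i \cap U_{i+1}) \to \PD(U_{i+1}/(U_i \cap U_{i+1}))$: well-definedness modulo $U_i \cap U_{i+1}$ in $u$ would use $[U_i, U_i \cap U_{i+1}] \subset U_i \cap U_{i+1}$ combined with $\fpsi_{U_i}$ being a character, and multiplicativity follows from the standard commutator identities together with $U_i \subset P_{i+1}$ normalizing $U_{i+1}$. To identify the kernel, I would rewrite the condition $\fpsi([v, u]) = 1$ for all $v \in U_{i+1}$ as $u$ stabilizing $\fpsi_{U_{i+1}}$ (hence $u \in M_{i+1}^\fpsi \cdot U_{i+1}$), intersect with $U_i$, and reduce modulo $U_i \cap U_{i+1}$ via the decomposition $U_i = (M_{i+1} \cap U_i) \ltimes (U_i \cap U_{i+1})$ to isolate $M_{i+1}^\fpsi \cap U_i$. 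Surjectivity then reduces to the dimension identity $\dim(M_{i+1} \cap U_i) = \dim(M_{i+1}^\fpsi \cap U_i) + \dim(M_i \cap U_{i+1})$, which I would verify directly from the explicit block descriptions. The dual sequence follows by Pontryagin duality of short exact sequences of $F$-vector spaces.

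The hard part will be the combinatorial weight estimate for the inclusion $U_i \subset P_{i+1}$ in Part 1 and the explicit dimension count for surjectivity in Part 2; both reduce to careful case analyses of how type-$(m,n)$ cocharacters interact with the block decomposition of $G$.
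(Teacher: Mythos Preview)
Your overall strategy is the same as the paper's: analyze the cocharacters $\varphi_i$ on roots, establish the two inclusions $U_i\subset P_{i+1}$ and $U_{i+1}\subset P_i$, and let the rest follow. There are two places where the paper's execution is cleaner than yours, and one genuine (though easily fixable) slip.

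\textbf{The slip.} Your justification of $U_{i+1}\subset P_i$ via the pointwise inequality $\lambda_j^{\varphi_i}\ge\lambda_j^{\varphi_{i+1}}$ is not valid: a pointwise inequality on weight vectors does not control differences $\lambda_j-\lambda_k$, so it cannot by itself rule out a sign flip for some root $e_j-e_k$. (Abstractly, $\varphi_i=(3,0,1)$ and $\varphi_{i+1}=(0,0,1)$ satisfy the pointwise inequality but $e_3-e_1$ lies in $U_{i+1}$ and not in $P_i$.) You need the same kind of block-by-block estimate you outline for the reverse inclusion.

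\textbf{First simplification.} The paper handles both inclusions and the abelianness of the quotients in one stroke by observing that for every root $\alpha$ one has $\langle\alpha,\varphi_i\rangle-\langle\alpha,\varphi_{i+1}\rangle$ bounded in absolute value (they state it lies in $\{-1,0,1\}$), hence the filtration relation $\Lieg^{\varphi_{i+1}}_{\ge j+1}\subset\Lieg^{\varphi_i}_{\ge j}\subset\Lieg^{\varphi_{i+1}}_{\ge j-1}$. This gives $U_i\subset P_{i+1}$, $U_{i+1}\subset P_i$, and moreover $\Lie(M_{i+1}\cap U_i)=\Lieg_0^{\varphi_{i+1}}\cap\Lieg_{>0}^{\varphi_i}\subset\Lieg_1^{\varphi_i}$, whence $M_{i+1}\cap U_i$ is abelian because $[\Lieg_1^{\varphi_i},\Lieg_1^{\varphi_i}]\subset\Lieg_2^{\varphi_i}$. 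This grading argument replaces your explicit check that sums of the relevant roots are not roots, and comes for free once you have the weight-difference bound.

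\textbf{Second simplification.} For Part~2 the paper avoids your dimension count for surjectivity. The maps $\cmap_i$ and $\cmap_i'$ are induced by the single biadditive pairing $(u,v)\mapsto\fpsi([u,v])$ on $(M_{i+1}\cap U_i)\times(M_i\cap U_{i+1})$, so $\cmap_i$ and $\cmap_i'$ are Pontryagin dual to each other. The paper observes that $U_{i+1}\cap M_i^{\fpsi}=1$, which says exactly that $\cmap_i'$ is injective; dualizing, $\cmap_i$ is surjective. Your dimension identity $\dim(M_{i+1}\cap U_i)=\dim(M_{i+1}^{\fpsi}\cap U_i)+\dim(M_i\cap U_{i+1})$ is equivalent, but the paper's route is shorter and conceptually cleaner.
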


\begin{proof}
For any $j,k$ we have $\lambda_j^{\varphi_i}-\lambda_k^{\varphi_i}-(\lambda_j^{\varphi_{i+1}}-\lambda_k^{\varphi_{i+1}})\in\{-1,0,1\}$.
It follows that
\begin{equation} \label{eq: asf}
\Lieg^{\varphi_{i+1}}_{\ge j+1}\subset\Lieg^{\varphi_i}_{\ge j}\subset\Lieg^{\varphi_{i+1}}_{\ge j-1}\text{ for all }j.
\end{equation}
Therefore, $U_i\subset P_{i+1}$ and $U_{i+1}\subset P_i$.
Hence, $U_i$ and $U_{i+1}$ normalize each other, so that $U_i\cdot U_{i+1}$ is a subgroup of $G$ that contains $U_i$ and $U_{i+1}$
as normal subgroups. The equalities \eqref{eq: Uii} are now clear since $T\subset M_i,M_{i+1}$.
%Let $U_i'=M_{i+1}\cap U_i$, then $U_iU_{i+1}/U_{i+1}$ is isomorphic to $U_i'$.
By \eqref{eq: asf} we have $\Lie M_{i+1}\cap U_i=\Lieg^{\varphi_i}_{>0}\cap\Lieg^{\varphi_{i+1}}_0\subset\Lieg^{\varphi_i}_1$.
It follows that $M_{i+1}\cap U_i$ is abelian since $[\Lieg^{\varphi_i}_1,\Lieg^{\varphi_i}_1]\subset\Lieg^{\varphi_i}_2$.
Similarly $M_i\cap U_{i+1}$ is abelian.
The rest of the lemma follows easily from the fact that $U_{i+1}\cap M_i^{\fpsi}=1$.
\end{proof}

%Indeed, upon writing $i=nd+r$, $0\le r<n$,
%$M_i\cap U_{i+1}$ consists of the lower unitriangular matrices whose $n\times n$ blocks $A_{j,k}$ satisfy
%\begin{enumerate}
%\item $A_{j,j}=I_n$ for all $j$.
%\item If $j>k$ then $A_{j,k}=0$ unless $j=d+2$ in which case $(A_{j,k})_{a,b}=0$ unless
%$b-a=n-r$.
%\end{enumerate}
%We have $\dim M_i\cap U_{i+1}=(d+1)r$.
%Similarly, $U_i\cap M_{i+1}$ (resp., $U_i\cap M_{i+1}\cap\mir$, $U_i\cap M_{i+1}^{\fpsi}$)
%consists of the upper unitriangular matrices whose $n\times n$ blocks $A_{j,k}$ satisfy

\begin{remark}
If $i=nd+r$, $0\le r<n$ then $U_i\cap M_{i+1}^{\fpsi}$
consists of the upper unitriangular matrices whose $n\times n$ blocks $A_{j,k}$ satisfy
\begin{enumerate}
\item $A_{j,j}=I_n$ for all $j$.
\item If $j<k$ then $A_{j,k}=0$ unless $k=d+2$
in which case $(A_{j,k})_{a,b}=0$ unless $a-b=n-r-1$ and all
entries of $A_{j,k}$ along the diagonal $a-b=n-r-1$ are identical.
\end{enumerate}
%We have $\dim M_{i+1}\cap U_i=(d+1)(r+1)$.
%and $\dim M_{i+1}\cap U_i\cap\dir=(\lfloor\frac in\rfloor+1)(i-n\lfloor\frac in\rfloor)$.
It is of dimension $d+1$. (It coincides with the unipotent radical of $M_{i+1}^{\fpsi}$ unless $i+1$ is divisible by $n$.)
\end{remark}

\label{sec: Haar}
In the rest of the section we endow various unipotent subgroups of $G$ with Haar measures.
Thanks to the choice of basis $e_1,\dots,e_{mn}$, the Lie algebra of any of these unipotent groups
has a natural basis as a vector space over $F$. Our convention will be to take the measure corresponding
to the product measure where the Haar measure on $F$ is the one which is self-dual with respect to $\psi$.

The following is a special case of \cite{1610.00284} (see also \cite{MR3705224}). For future reference we provide the (elementary) proof.

\begin{proposition} \label{prop: trans2}
For any $i=0,\dots,(m-1)n-1$ the map
\begin{multline}\label{eq: transT}
W_i\mapsto\int_{U_i\cap U_{i+1}\bs U_{i+1}}W_i(u'\cdot)\fpsi_{U_{i+1}}(u')^{-1}\ du'=
\int_{U_i\cap U'\bs U_{i+1}\cap U'}W_i(u'\cdot)\fpsi_{U_{i+1}}(u')^{-1}\ du'
\\=\int_{U_i\cap\bar N\bs U_{i+1}\cap\bar N}W_i(u'\cdot)\ du'
\end{multline}
defines an isomorphism $\trns_i=\trns_i^\psi:\Whit^{\fpsi_{U_i}}(\sigma)\rightarrow\Whit^{\fpsi_{U_{i+1}}}(\sigma)$.
Its inverse is given by
\begin{equation} \label{eq: invi}
W_{i+1}\mapsto\int_{U_i\cap P_{i+1}^{\fpsi}\bs U_i}W_{i+1}(u\cdot)\fpsi_{U_i}(u)^{-1}\ du.
\end{equation}
In both cases the integrands are compactly supported.
\end{proposition}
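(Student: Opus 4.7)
The plan is to verify, in order: (a) that the integrand descends to the quotient $U_i\cap U_{i+1}\bs U_{i+1}$; (b) that it has compact support there; (c) that the three integral expressions agree; (d) that $\trns_i$ is $G$-equivariant and lands in $\Whit^{\fpsi_{U_{i+1}}}(\sigma)$; (e) that the formula \eqref{eq: invi} defines a two-sided inverse.

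Step (a) is immediate because both $\fpsi_{U_i}$ and $\fpsi_{U_{i+1}}$ are restrictions of $\fpsi$ and hence agree on $U_i\cap U_{i+1}$, so the integrand descends. Step (d) follows by the substitution $u'\mapsto u'v^{-1}$ (for $v\in U_{i+1}$) inside the integral defining $\trns_i$, using that $\fpsi_{U_{i+1}}$ is a character of $U_{i+1}$; $G$-equivariance of $\trns_i$ in the right variable is tautological.

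Step (b) is the main technical obstacle. Via Lemma \ref{lem: ds} we identify $U_i\cap U_{i+1}\bs U_{i+1}\simeq M_i\cap U_{i+1}$, so it suffices to prove that for fixed $g$ the function $u'\mapsto W_i(u'g)$ has compact support on $M_i\cap U_{i+1}$. The argument is the standard one for such degenerate Whittaker data, going back to M\oe glin--Waldspurger \cite{MR913667} and extended in \cite{1610.00284,MR3705224}: for $v\in M_{i+1}\cap U_i\subset U_i$, combine the left-equivariance
\[
W_i(vu'g)=\fpsi_{U_i}(v)W_i(u'g)
\]
with the identity $vu'=u'v[v,u']$ (in which $[v,u']\in U_i\cap U_{i+1}$ by Lemma \ref{lem: ds}) and the right-smoothness of $W_i$ under some compact open $K\subset G$: whenever $v\in gKg^{-1}\cap(M_{i+1}\cap U_i)$ is such that $[v,u']\in gKg^{-1}$, the above reduces to $\fpsi_{U_i}(v)W_i(u'g)=W_i(u'g)$. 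The non-degeneracy of the pairing $\cmap_i$ supplied by Lemma \ref{lem: ds} then guarantees that, once $u'$ leaves a sufficiently large compact subset of $M_i\cap U_{i+1}$, one can find such $v$ with $\fpsi_{U_i}(v)\ne 1$, forcing $W_i(u'g)=0$. Compact support of the integrand in \eqref{eq: invi} is handled symmetrically by reversing the roles of $U_i$ and $U_{i+1}$.

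For step (c), each of the three coset spaces parametrizes the same abelian quotient $U_{i+1}/(U_i\cap U_{i+1})$: the first via the complementary subgroup $M_i\cap U_{i+1}$ from Lemma \ref{lem: ds}, the second via its transversal inside $U'$, and the third inside $\bar N$. The last formula no longer carries $\fpsi_{U_{i+1}}^{-1}$ because $\fpsi$ restricted to $\bar N$ is trivial; with the Haar measure normalizations fixed in \S\ref{sec: Haar} the three expressions compute the same number. For step (e), observe that $U_i\cap P_{i+1}^{\fpsi}\bs U_i$ is identified with $(M_{i+1}\cap U_i)/(M_{i+1}^{\fpsi}\cap U_i)$, which by Lemma \ref{lem: ds} is Pontryagin-dual to $M_i\cap U_{i+1}\simeq U_i\cap U_{i+1}\bs U_{i+1}$ via $\cmap_i$. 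Composing $\trns_i$ with \eqref{eq: invi} therefore amounts to Fourier inversion over this pairing with the self-dual normalization of measures, recovering $W_i$. Since both $\Hom_{U_i}(\sigma,\fpsi_{U_i})$ and $\Hom_{U_{i+1}}(\sigma,\fpsi_{U_{i+1}})$ are one-dimensional, this suffices to conclude that $\trns_i$ is an isomorphism.
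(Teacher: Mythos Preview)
Your compact-support argument in step (b) has a genuine gap. You correctly derive that whenever $v\in gKg^{-1}\cap(M_{i+1}\cap U_i)$ and $[v,u']\in gKg^{-1}$ one has $\fpsi_{U_i}(v)W_i(u'g)=W_i(u'g)$, and then you seek such a $v$ with $\fpsi_{U_i}(v)\ne1$. But $\fpsi_{U_i}$ is \emph{identically trivial} on $M_{i+1}\cap U_i$: by definition $\fpsi(v)=\psi(\sum_{n\nmid j}v_{j,j+1})$, and for $v\in M_{i+1}$ the entry $v_{j,j+1}$ vanishes whenever $n\nmid j$, since $\varphi_{i+1}$ being of type $(m,n)$ forces $\lambda_j^{\varphi_{i+1}}-\lambda_{j+1}^{\varphi_{i+1}}=1\ne0$. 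So no such $v$ exists, and your argument yields no constraint on $u'$. The paper proceeds differently: it applies right-smoothness directly, writing $W_i(u'u)=W_i(u')$ for $u$ in a small compact open subgroup of $U_i$, while moving $u$ to the left past $u'$ gives $W_i(u'u)=\fpsi_{U_i}(u)\,\cmap_i(u)(u')\,W_i(u')$. For $u$ small one has $\fpsi_{U_i}(u)=1$, hence $\cmap_i(u)(u')=1$ for all such $u$; surjectivity of $\cmap_i$ in Lemma~\ref{lem: ds} then confines $u'$ to a compact set modulo $U_i\cap U_{i+1}$. The character that detects large $u'$ is the commutator pairing $\cmap_i(u)(u')=\fpsi([u',u])$, not $\fpsi_{U_i}(v)$.

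Your ``symmetrically'' for the compact support of \eqref{eq: invi} also hides a genuine asymmetry. The quotient there is $U_i\cap P_{i+1}^\fpsi\bs U_i$, so one must first know that $W_{i+1}$ is left-invariant under $U_i\cap M_{i+1}^\fpsi$ (the kernel of $\cmap_i$ in Lemma~\ref{lem: ds}), which is not part of the $U_{i+1}$-equivariance. The paper obtains this from multiplicity one of $\Hom_{U_{i+1}}(\sigma,\fpsi_{U_{i+1}})$: it forces $W_{i+1}$ to be $M_{i+1}^\fpsi$-equivariant under a character, necessarily trivial on the unipotent radical. Only after this is established does the analogous smoothness-plus-commutator argument (now using injectivity of $\cmap_i'$) give compact support, and Fourier inversion then shows \eqref{eq: invi} is a left inverse to $\trns_i$; irreducibility makes it a two-sided inverse.
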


\begin{proof}
For any $W_i\in\Ind_{U_i}^G\fpsi_{U_i}$, $u\in U_i$ and $u'\in U_{i+1}$ we have $W_i(u'u)=\cmap_i(u)(u')\fpsi_{U_i}(u)W_i(u')$.
It follows from Lemma \ref{lem: ds} and the smoothness of $W_i$ that $W_i\rest_{U_{i+1}}$ is compactly
supported modulo $U_i\cap U_{i+1}$ and that
for any $u\in U_i$,
\begin{equation} \label{eq: FT}
\text{$\fpsi_{U_i}(u)^{-1}\trns_iW_i(u)$ is the Fourier transform of
the function $W_i\fpsi_{U_{i+1}}^{-1}\rest_{U_{i+1}/U_i\cap U_{i+1}}$ at $\cmap_i(u)$.}
\end{equation}

Note that by multiplicity one, any $W_{i+1}\in\Whit^{\fpsi_{U_{i+1}}}(\sigma)$ is left-invariant under any unipotent
subgroup of $M_{i+1}^{\fpsi}$. Also, $U_i\cap P_{i+1}^{\fpsi}=(U_i\cap M_{i+1}^{\fpsi})\ltimes(U_i\cap U_{i+1})$.
By a similar reasoning as before, $W_{i+1}\rest_{U_i}$ is compactly supported modulo $U_i\cap P_{i+1}^{\fpsi}$.
%for any $W_{i+1}\in\Ind_{U_{i+1}}^G\fpsi_{U_{i+1}}$.
By Lemma \ref{lem: ds} and Fourier inversion, the map \eqref{eq: invi} defines a $G$-equivariant left inverse to $\trns_i$.
Since the spaces are irreducible, it is also a right inverse.
\end{proof}

\begin{remark} \label{rem: unrmtrns}
Suppose that $\sigma$ is unramified, $\psi$ has conductor $\OOO$ and $W_i\in\Whit^{\fpsi_{U_i}}(\sigma)$
is the unramified vector such that $W_i(e)=1$. Then $\trns_iW_i(e)=1$.
This follows immediately from the proof of Proposition \ref{prop: trans2}.
\end{remark}

%\begin{remark}
%The injectivity of $\trns_i$ is a special case of the results of \cite{MR3705224}.
%\end{remark}

%\Erez{Compare with \cite{MR3705224}}

%\Erez{Mention that we can prove directly that the map is an isomorphism. What does it entail?}

We write
\[
\trns=\trns^\psi=\trns_{(m-1)n-1}\circ\dots\circ\trns_0:\Whit^{\fpsi_N}(\sigma)\rightarrow\Whit^{\fpsi_{U'}}(\sigma).
\]
This operator was also considered in \cite{1802.02637}.

\subsection{}
We now introduce a subgroup of $G$ which would play an important role in what follows.
Let $\mir=\mir_{m,n}$ be the joint stabilizer of the vectors $e_{jn}$, $j=1,\dots,m$ in $G$
and let $N_\mir=N\cap\mir\supset N_M$. Note that $U'\subset\mir$.
(In the case $m=1$, $\mir$ is the standard mirabolic subgroup.)

The following is straightforward.

\begin{lemma} \label{lem: splitD}
We have $M_{i+1}\cap U_i=(M_{i+1}\cap U_i\cap\mir)\times(U_i\cap M_{i+1}^{\fpsi})$.
Hence, the restriction of $\cmap_i$ to $\mir\cap U_i/\mir\cap U_i\cap U_{i+1}\simeq\mir\cap M_{i+1}\cap U_i$ is an isomorphism.
Dually, $\cmap'_i$ defines an isomorphism between $U_{i+1}/U_i\cap U_{i+1}\simeq M_i\cap U_{i+1}$
and the Pontryagin dual of $\mir\cap U_i/\mir\cap U_i\cap U_{i+1}\simeq\mir\cap M_{i+1}\cap U_i\simeq
U_i\cap M_{i+1}/U_i\cap M_{i+1}^{\fpsi}$.
\end{lemma}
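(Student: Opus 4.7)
The core of the proof is the direct product decomposition; the two isomorphism statements follow formally from this together with Lemma \ref{lem: ds}. The plan is to work at the Lie algebra level since $M_{i+1}\cap U_i$ is abelian by Lemma \ref{lem: ds}, so exponentiation reduces a vector space decomposition to a group decomposition.

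Write $i = nd + r$ with $0 \le r < n$. The key geometric observation is that for $X \in \Lie(M_{i+1}\cap U_i) \subset \Lieg^{\varphi_i}_1$, the weight conditions force the ``distinguished rows'' $n, 2n, \dots, mn$ of $X$ to have a very constrained shape: row $jn$ vanishes identically when $j \ge d+2$ and admits at most one nonzero entry, at column $i+n+1$, when $j \le d+1$. This falls out of a direct unravelling of the values $\lambda^{\varphi_i}_k$ and $\lambda^{\varphi_{i+1}}_k$ on each of the blocks $1, \dots, m$. On the other hand, the Remark above identifies $\Lie(U_i \cap M_{i+1}^{\fpsi})$ with a $(d+1)$-dimensional space parametrized by ``constant-diagonal'' values $c_j$ attached to the blocks $A_{j,d+2}$, and under this identification $c_j$ is precisely the $(jn,\, i+n+1)$-entry. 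Given these two matchings the splitting is almost visible: for $X \in \Lie(M_{i+1}\cap U_i)$, let $X_\fpsi \in \Lie(U_i \cap M_{i+1}^{\fpsi})$ be the element with parameters $c_j := X_{jn,\, i+n+1}$. A further weight check verifies that the auxiliary (non-distinguished-row) entries of $X_\fpsi$ forced by the constant-diagonal constraint do satisfy both weight conditions, so $X_\fpsi$ indeed lies in $\Lie(M_{i+1}\cap U_i)$. Then $X - X_\fpsi$ has vanishing distinguished rows and so lies in $\Lie(\mir \cap M_{i+1}\cap U_i)$; triviality of the intersection is clear since any element of $\mir \cap U_i \cap M_{i+1}^{\fpsi}$ must have each $c_j = 0$.

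For the first isomorphism claim, observe that the Levi-type decomposition $U_i = (M_{i+1}\cap U_i)\ltimes (U_i\cap U_{i+1})$ of \eqref{eq: Uii} coincides at the Lie algebra level with the decomposition of $\Lie U_i$ into its $\varphi_{i+1}$-weight-$0$ and weight-$\ge 1$ subspaces; this projection acts entrywise on matrices and therefore preserves the defining condition of $\mir$, yielding $\mir \cap U_i/(\mir\cap U_i\cap U_{i+1}) \simeq \mir\cap M_{i+1}\cap U_i$. The kernel of $\cmap_i$ on $M_{i+1}\cap U_i$ is $U_i\cap M_{i+1}^{\fpsi}$ by Lemma \ref{lem: ds}, so the splitting makes the restriction of $\cmap_i$ to $\mir\cap M_{i+1}\cap U_i$ injective. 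The dimension count $\dim(M_{i+1}\cap U_i) = \dim(U_i\cap M_{i+1}^{\fpsi}) + \dim(M_i\cap U_{i+1})$ from the first exact sequence, combined with the splitting, gives $\dim(\mir\cap M_{i+1}\cap U_i) = \dim(M_i\cap U_{i+1})$, upgrading injectivity to the desired isomorphism onto $\PD(M_i\cap U_{i+1})$.

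The dual assertion for $\cmap'_i$ is then formal: the second exact sequence of Lemma \ref{lem: ds} identifies the image of $\cmap'_i$ in $\PD(M_{i+1}\cap U_i)$ with the annihilator of $U_i\cap M_{i+1}^{\fpsi}$, and the splitting exhibits this annihilator as $\PD(\mir\cap M_{i+1}\cap U_i)$; the further identification with $\PD(U_i\cap M_{i+1}/U_i\cap M_{i+1}^{\fpsi})$ is merely the splitting rewritten as a quotient. The main (and essentially only) obstacle is the explicit row analysis in the second paragraph; it is routine but requires separate bookkeeping for the two cases $r<n-1$ and $r=n-1$, the latter being where the block structure of $\varphi_{i+1}$ shifts relative to that of $\varphi_i$.
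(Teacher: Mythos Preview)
Your proof is correct. The paper itself provides no proof of this lemma, merely introducing it with ``The following is straightforward''; your explicit verification of the direct product decomposition via the row analysis of $\Lie(M_{i+1}\cap U_i)$, together with the deduction of the two isomorphism claims from Lemma~\ref{lem: ds}, is exactly the kind of direct computation that the authors are suppressing. One small remark: your assertion that the projection $U_i\to M_{i+1}\cap U_i$ ``acts entrywise'' and hence preserves $\mir$ is true but deserves a word of justification at the group level---writing $u=mv$ with $m\in M_{i+1}\cap U_i$, $v\in U_i\cap U_{i+1}$, the weight constraints force $u_{j,k}=m_{j,k}$ whenever $\lambda^{\varphi_{i+1}}_j=\lambda^{\varphi_{i+1}}_k$, which is what you need.
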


Hence, we can rewrite \eqref{eq: invi} as
\[
W_{i+1}\mapsto
\int_{\mir\cap U_i\cap U_{i+1}\bs\mir\cap U_i}W_{i+1}(u\cdot)\fpsi_{U_i}(u)^{-1}\ du
=\int_{N_\mir\cap U_{i+1}\bs N_\mir\cap U_i}W_{i+1}(u\cdot)\fpsi_{U_i}(u)^{-1}\ du.
\]

\begin{lemma} \label{L: support}
Any $\typZe\in \Ind_N^G \fpsi_N$ is compactly supported on $\mir\cap \bar N$. Hence,
\[
\trns \typZe=\int_{U'\cap N\bs U'}\typZe(u'\cdot)\fpsi_{U'}(u')^{-1}\ du'=\int_{U'\cap\bar N}\typZe(u'\cdot)\ du'
=\int_{U'\cap\bar U}\typZe(u'\cdot)\ du'
\]
where the integrand is compactly supported.
\end{lemma}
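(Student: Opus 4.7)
The plan is to prove compact support of $\typZe$ on $\mir\cap\bar N$ first, and then to iterate Proposition~\ref{prop: trans2} to derive the three equivalent expressions for $\trns\typZe$. Fix throughout an open compact subgroup $K_0\subset G$ that fixes $\typZe$ on the right.

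For the support claim I use the identity
\[
\typZe(y) = \typZe(y\cdot y^{-1}ny) = \typZe(ny) = \fpsi_N(n)\,\typZe(y),
\]
valid for any $n\in N$ with $y^{-1}ny\in K_0$, which forces $\typZe(y)=0$ whenever $\fpsi_N(n)\neq1$. It therefore suffices to produce, for each $y\in\mir\cap\bar N$ outside a suitable compact set, such an $n$. I plan to take $n=u_Mu_U$ with $u_M\in N_M$ (on which $\fpsi_N$ is non-degenerate, so $\fpsi_N(n)=\fpsi_{N_M}(u_M)$ can be arranged nontrivial) and $u_U\in U$ (on which $\fpsi_N$ is trivial, so $u_U$ does not affect the character value), chosen so that $y^{-1}(u_Mu_U)y$ lies in $U\subset N$ with entries of size $O(\norm{y}^{-1})$ and hence in $K_0$ once $\norm{y}$ is large. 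The critical input is the defining condition $e_{jn}y=e_{jn}$ of $\mir$: because certain rows of $y$ are fixed, the matrix products appearing in $y^{-1}(u_Mu_U)y$ collapse, and the linear equation determining $u_U$ from $u_M$ admits a solution bounded by a constant multiple of $\norm{y}^{-1}$. Without this ``fixed vectors'' constraint the equation would not admit such a small solution as $\norm{y}\to\infty$.

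Once the support claim is established, the three forms of $\trns\typZe$ follow by iterating Proposition~\ref{prop: trans2}. Applied to the translate $h\mapsto\typZe(hg)$, the composition $\trns=\trns_{(m-1)n-1}\circ\cdots\circ\trns_0$ unfolds into an iterated integral over $U_i\cap\bar N\bs U_{i+1}\cap\bar N$ for $i=0,\dots,(m-1)n-1$; since $U_0\cap\bar N=\{I\}$ and $U_{(m-1)n}\cap\bar N=U'\cap\bar N$, Fubini (justified by the compact support of the integrand, which is a translate of $\typZe$ and so again covered by the first step) telescopes these into the single integral $\int_{U'\cap\bar N}\typZe(u'g)\,du'$. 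The identity $U'\cap\bar N=U'\cap\bar U$ is immediate from the explicit description of $U'$, and the product decomposition $U'=(U'\cap N)\cdot(U'\cap\bar N)$, combined with the observation that $\fpsi_{U'}$ vanishes on strictly lower-triangular elements, converts the integral over $U'\cap\bar N$ into the one over $U'\cap N\bs U'$ weighted by $\fpsi_{U'}^{-1}$.

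The main obstacle will be the explicit conjugation step: one must verify, uniformly over all large directions in $\mir\cap\bar N$ (both the block-diagonal part $\bar N\cap M\cap\mir$, where each diagonal block lies in the classical mirabolic of $\GL_n$, and the off-block-diagonal part $\bar U\cap\mir$, where each off-diagonal block has vanishing last row), that a compensating $u_U$ of size $O(\norm{y}^{-1})$ exists while maintaining $\fpsi_{N_M}(u_M)\neq1$. Organising the calculation according to the $2\times2$ block decomposition afforded by the maximal parabolic $Q$ of type $((m-1)n,n)$ keeps the bookkeeping manageable; in each block the fixed-vector constraint is precisely what makes the relevant matrix equation solvable with small solution.
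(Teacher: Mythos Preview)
Your plan for the ``Hence'' clause—iterating Proposition~\ref{prop: trans2} and telescoping the integrals over $U_i\cap\bar N\bs U_{i+1}\cap\bar N$—is fine. The compact-support argument, however, has a gap in the mechanism you describe. You propose to fix $u_M\in N_M$ with $\fpsi_{N_M}(u_M)\neq1$ and then solve for $u_U\in U$ so that $y^{-1}(u_Mu_U)y$ lies in $U$ (or at least in $K_0$). But conjugation by $y\in\bar N$ produces lower-triangular entries that no choice of $u_U\in U$ can cancel. Concretely, take $n=3$, $m=2$, $y=I+sE_{5,4}\in\mir\cap\bar N$ and $u_M=I+tE_{4,5}$: one checks that the $(5,4)$-entry of $y^{-1}(u_Mu_U)y$ equals $-ts^2$ for \emph{every} $u_U\in U$, so forcing this into $K_0$ with $|s|$ large makes $|t|$ too small for $\psi(t)\neq1$. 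The correct move in this example is to place the nontrivial entry of $u_M$ at $(5,6)$ and compensate with an $E_{4,6}$-entry (still in $N_M$, not in $U$); but identifying which superdiagonal direction works, uniformly over all large $y\in\mir\cap\bar N$, is exactly the content you have not supplied, and the fixed-row condition $e_{jn}y=e_{jn}$ does not by itself make your linear system for $u_U$ solvable with the required bounds.

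The paper sidesteps this case analysis by passing to the Iwasawa decomposition $g=ank$, $a=\diag(a_1,\dots,a_{mn})$. Two elementary observations suffice: (i) $g\in\mir$ forces $|a_{jn}|\le1$ for each $j$, since $e_{jn}=e_{jn}g$ gives $e_{jn}k^{-1}=a_{jn}\cdot(e_{jn}n)\in\OOO^{mn}$ and the $jn$-th coordinate of $e_{jn}n$ is $1$; (ii) the standard support condition on $\Ind_N^G\fpsi_N$ bounds $|a_i/a_{i+1}|$ whenever $n\nmid i$. Together these bound every $|a_i|$. For $g\in\bar N$ one then invokes the elementary inequality $\norm{g}\le\max_j\bigl|\prod_{i=j}^{mn}a_i\bigr|$, and compact support follows. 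This is both shorter and avoids the direction-finding problem your direct approach runs into.
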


\begin{proof}
Let $g=ank\in G$ with $a=\diag(a_1,\ldots, a_{nm})$, $n\in N$ and $k\in G(\OOO)$.
It is well-known and easy to prove that if $g\in\bar N$ then
$\norm{g}\le\max_{j=1,\dots,mn}\abs{\prod_{j=i}^{nm}a_j}$.
On the other hand, if is also easy to see that if $g\in\mir$ then $\abs{a_{jn}}\leq 1$ for $j=1,\ldots,m$.
Thus, if $g\in\mir$ and $\typZe(g)\ne0$ then by the support condition for Whittaker functions
we get $\abs{a_i}\le C_1$ for all $i$ where $C_1$ depends only on $\typZe$.
By the above, if moreover $g\in\bar N$ then $\norm{g}$ is bounded in terms of $\typZe$ as required.
\end{proof}

By uniqueness $W_i\in\Whit^{\fpsi_{U_i}}(\sigma)$ is $M_i^\fpsi$-equivariant with respect to some character $\chi_i$ of $M_i^{\fpsi}$.
We explicate this character.

\begin{lemma} (cf.~\cite{1802.02637}) \label{lem: equicchar}
For any $i=nd+r$, $0\le r<n$, $l\in\GL_{d+1}$ and $t_{d+2},\dots,t_m\in F^*$ we have
\begin{multline*}
\chi_i(\iota(\diag(l,t_{d+2},\dots,t_m)))\\=\omega_\pi(t_{d+2}\dots t_m\det l)
\abs{\det l}^{-{r\choose2}+{n\choose2}(m-d-1)}\abs{t_{d+2}}^{{r\choose2}(d+1)}\prod_{j=d+2}^m\abs{t_j}^{n(n-1)(\frac{m+1}2-j)}
\end{multline*}
where $\omega_\pi$ is the central character of $\pi$.
In particular, for any $\typSh\in\Whit^{\fpsi_{U'}}(\sigma)$
\[
\typSh(\iota(l)g)=\omega_\pi(\det l)\typSh(g)\ \ \forall l\in\GL_m,\ g\in G.
\]
\end{lemma}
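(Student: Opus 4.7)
The plan is to compute $\chi_i$ by induction on $i$, starting at the Zelevinsky model ($i=0$) and propagating via the transition maps $\trns_i$. By multiplicity one and Schur's lemma, left translation by $m\in M_i^\fpsi$ preserves the one-dimensional space $\Whit^{\fpsi_{U_i}}(\sigma)$ and thus acts by some character $\chi_i$. Since $\iota(T_m)\subset M_i^\fpsi$ for every $i$ (where $T_m$ denotes the diagonal torus of $\GL_m$), and since characters on the reductive part of $M_i^\fpsi$---a product of $\GL_{d+1}$ and copies of $F^*$---factor through the determinants of the factors, it suffices to evaluate $\chi_i$ on $\iota(T_m)$.

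For the base case $i=0$, the element $\iota(\diag(l,t_2,\dots,t_m))$ is the block-scalar matrix $\diag(lI_n,t_2I_n,\dots,t_mI_n)$ lying in $M$. Applying \eqref{eq: restW} and the identity $W_{\pi^{\otimes m}}(\diag(lI_n,t_2I_n,\dots,t_mI_n))=\omega_\pi(lt_2\cdots t_m)W_{\pi^{\otimes m}}(e)$ gives
\[
\chi_0(\iota(\diag(l,t_2,\dots,t_m)))=(\delta_P^{1/2}(\delta')^{-1})(\diag(lI_n,\dots,t_mI_n))\cdot\omega_\pi(lt_2\cdots t_m).
\]
A direct computation using $\delta_P(\diag(g_1,\dots,g_m))=\prod_j\abs{\det g_j}^{n(m+1-2j)}$ and $\delta'=\delta_P^{1/(2n)}$ then reproduces the stated formula at $(d,r)=(0,0)$.

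For the inductive step $i\to i+1$, write $W_{i+1}=\trns_i W_i$. For $m\in M_i^\fpsi\cap M_{i+1}^\fpsi$ (which contains $\iota(T_m)$), the element $m$ normalizes both $U_i$ and $U_{i+1}$, hence their intersection. Substituting $u'\mapsto mu'm^{-1}$ in the integral defining $W_{i+1}(mg)$, using the $\fpsi_{U_{i+1}}$-invariance of such conjugation, and then invoking the $M_i^\fpsi$-equivariance of $W_i$ yields
\[
\chi_{i+1}(m)=\abs{\det(\Ad(m)|_{\Lie(U_{i+1}/U_i\cap U_{i+1})})}\cdot\chi_i(m).
\]
The Jacobian is computed via the isomorphism $U_{i+1}/U_i\cap U_{i+1}\simeq M_i\cap U_{i+1}$ supplied by Lemma \ref{lem: ds} and the block structure described there. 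Multiplying these Jacobians for $j=0,\dots,i-1$ and combining with $\chi_0$ recovers $\chi_i$ on $\iota(T_m)$; the formula then extends to the reductive part of $M_i^\fpsi$ by factorization through determinants. The ``in particular'' assertion for the Shalika model is the specialization to $(d,r)=(m-1,0)$, where the unipotent radical is trivial and the formula collapses to $\omega_\pi(\det l)$.

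The main obstacle will be the combinatorial bookkeeping in the inductive step: one must verify that the telescoping product of Jacobians produces exactly the $\binom{r}{2}$ and $\binom{n}{2}$ coefficients in the formula, with particular care at the transitions $(d,n-1)\to(d+1,0)$ where the reductive part of $M_i^\fpsi$ grows from $\iota(\GL_{d+1}\times(F^*)^{m-d-1})$ to $\iota(\GL_{d+2}\times(F^*)^{m-d-2})$. A separate and relatively straightforward argument handles triviality of $\chi_i$ on the unipotent radical of $M_i^\fpsi$ (present when $r>0$): that radical is abelian, the reductive part acts on it by conjugation, and any character must be invariant under this adjoint action, which forces triviality upon inspection of the explicit weight structure.
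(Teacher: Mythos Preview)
Your proposal is correct, but the paper takes a more direct route that sidesteps precisely the ``combinatorial bookkeeping'' you flag as the main obstacle. Rather than inducting on $i$ one transition map at a time, the paper uses the fact that the composite $\trns_{i-1}\circ\cdots\circ\trns_0$ can be written as a single integral
\[
W_i(g)=\int_{\bar N\cap U_i}\typZe(ug)\ du
\]
(the integrand being compactly supported by Lemma~\ref{L: support}). Since $\iota(t)$ for $t\in T_m$ lies in the center $Z_M$ of $M$, conjugating it past $u\in\bar N\cap U_i$ introduces a single Jacobian factor $\delta_{Z_M\ltimes(U_i\cap\bar N)}(\iota(t))^{-1}$, and then one applies the $\chi_0$-equivariance you computed (via \eqref{eq: restW}) once. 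The whole formula then reduces to verifying a single identity among explicit characters of $T_m$, namely that $\delta_P^{1/2}\delta'^{-1}(\iota(t))$ equals the claimed expression times this modulus factor. Your telescoping product of Jacobians $\prod_{j<i}\abs{\det\Ad(m)\rest_{\Lie(U_{j+1}/U_j\cap U_{j+1})}}$ is exactly $\delta_{Z_M\ltimes(U_i\cap\bar N)}^{-1}$ collapsed into one step, so the two arguments are equivalent; the paper's version simply avoids the case analysis at the $(d,n-1)\to(d+1,0)$ transitions. Your treatment of the unipotent radical of $M_i^\fpsi$ via $\Ad$-invariance is fine and matches the paper's one-line remark (in the proof of Proposition~\ref{prop: trans2}) that multiplicity one forces left-invariance under any unipotent subgroup of $M_i^\fpsi$.
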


\begin{proof}
It is enough to evaluate $\chi_i$ on an element $\iota(t)$ where $t=\diag(t_1,\dots,t_m)$ is in the diagonal torus of $\GL_m$.
Note that $\iota(t)$ lies in the center $Z_M$ of $M$.
Writing $W_i=\int_{\bar N\cap U_i}\typZe(u\cdot)\ du$ with $\typZe\in \Whit^{\fpsi_{N}}(\sigma)$
(the integrand is compactly supported by Lemma~\ref{L: support}), the required relation follows from the equality
\begin{multline*}
\delta_P^{\frac12}\delta'^{-1}(\iota(t))=\prod_{j=1}^m\abs{t_j}^{n(n-1)(\frac{m+1}2-j)}\\=
\big(\prod_{j=1}^{d+1}\abs{t_j}^{-{r\choose2}+{n\choose2}(m-d-1)}\big)
\big(\prod_{j=d+2}^m\abs{t_j}^{n(n-1)(\frac{m+1}2-j)}\big)\abs{t_{d+2}}^{{r\choose2}(d+1)}
\delta_{Z_M\ltimes(U_i\cap\bar N)}(\iota(t))^{-1}.
\end{multline*}
\end{proof}

\begin{remark}\label{rem: w0}
Let $w_m=\left(\begin{smallmatrix}&&&1\\&&1&\\&\iddots&&\\1&&&\end{smallmatrix}\right)\in\GL_m$
and $\ww=\iota(w_m)$. By Lemma~\ref{lem: equicchar} we have
\begin{equation}\label{eq: w0}
\typSh(\ww g)= \omega_\pi(-1)^{m\choose2}\typSh(g)
\end{equation}
for any $\typSh\in \Whit^{\fpsi_{U'}}(\sigma)$.
\end{remark}

\begin{lemma} \label{L: support2}
The inverse of $\trns$ is given by
\begin{equation}\label{eq: inverse}
\typSh\mapsto\int_{N\cap U'\bs N_\mir}\typSh(u\cdot)\fpsi_N(u)^{-1}\ du
\end{equation}
where the integrand is compactly supported.
\end{lemma}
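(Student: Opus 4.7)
The plan is to iterate the inverse formula from Proposition~\ref{prop: trans2}, in the reformulated shape provided by Lemma~\ref{lem: splitD}, along the decomposition $\trns^{-1} = \trns_0^{-1}\circ\cdots\circ\trns_{(m-1)n-1}^{-1}$, and then collapse the resulting iterated integral into a single integral over $N\cap U'\bs N_\mir$. Setting $N_i := N_\mir\cap U_i$, the identifications $N_0 = N_\mir$ (from $U_0 = N\supset N_\mir$) and $N_{(m-1)n} = N\cap U'$ (from $U'\subset\mir$), together with the containments $U_{i+1}\cap N\subset U_i\cap N$ noted in \S\ref{sec: models}, produce a decreasing filtration $N_\mir = N_0\supset N_1\supset\cdots\supset N_{(m-1)n} = N\cap U'$ interpolating between the two domains.

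By Lemma~\ref{lem: splitD} the inverse $\trns_i^{-1}$ of Proposition~\ref{prop: trans2} is given by integration over $N_{i+1}\bs N_i$ against the character $\fpsi_{U_i}^{-1}$, with compactly supported integrand. Composing these for $i$ from $(m-1)n-1$ down to $0$ produces the iterated integral
\begin{multline*}
\trns^{-1}(\typSh)(g) = \int_{N_1\bs N_0}\cdots\int_{N_{(m-1)n}\bs N_{(m-1)n-1}}\typSh(u_{(m-1)n-1}\cdots u_0\, g)\\
\cdot\prod_{i=0}^{(m-1)n-1}\fpsi_{U_i}(u_i)^{-1}\,du_{(m-1)n-1}\cdots du_0.
\end{multline*}
To combine this into a single integral, I would verify that each $N_{i+1}$ is normal in $N_i$: for $u\in N_i\subset U_i$ and $v\in N_{i+1}\subset U_{i+1}$, the commutator $[u,v]$ lies in $[U_i,U_{i+1}]\cap N\cap\mir\subset(U_i\cap U_{i+1})\cap N\cap\mir = N_{i+1}$ by Lemma~\ref{lem: ds}, so $uvu^{-1}=[u,v]\,v\in N_{i+1}$. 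Standard measure theory on filtrations of unimodular groups by closed normal subgroups then identifies the iterated integral with the single integral of $\typSh(ug)\prod_i \fpsi_{U_i}(u_i)^{-1}$ over $N\cap U'\bs N_\mir$, where $u = u_{(m-1)n-1}\cdots u_0$.

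It remains to identify the character and to check compact support. Since $\fpsi_{U_i}$ is the restriction of $\fpsi$ to $U_i$, on the subgroup $N_i\subset U_i\cap N$ it coincides with $\fpsi_N$; as $\fpsi_N$ is a character of $N$, $\prod_i\fpsi_{U_i}(u_i) = \fpsi_N(u_{(m-1)n-1}\cdots u_0) = \fpsi_N(u)$, giving the stated formula. Compact support of the final integrand follows inductively from the compact support at each step guaranteed by Proposition~\ref{prop: trans2}. The main technical step is the measure-theoretic consolidation of the iterated integral: one must check that the multiplication map $(u_{(m-1)n-1},\ldots,u_0)\mapsto u_{(m-1)n-1}\cdots u_0$ induces a measure-preserving bijection of $\prod_i(N_{i+1}\bs N_i)$ onto $N\cap U'\bs N_\mir$, which rests on the normality of the filtration and the compatibility of the Haar measures fixed in \S\ref{sec: Haar}.
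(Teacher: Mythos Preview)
Your derivation of the inverse formula by composing the step-by-step inverses from Proposition~\ref{prop: trans2}, collapsing the filtration $N_\mir=N_0\supset N_1\supset\cdots\supset N_{(m-1)n}=N\cap U'$, and identifying the product of characters $\prod_i\fpsi_{U_i}(u_i)$ with $\fpsi_N(u)$ is correct, and this is exactly what the paper does in one sentence (``From Proposition~\ref{prop: trans2} we only need to check that the integrand is compactly supported'').

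The gap is in your treatment of compact support. The assertion that compact support of the single integrand on $N\cap U'\bs N_\mir$ ``follows inductively from the compact support at each step'' is not justified and is false in general. What Proposition~\ref{prop: trans2} gives you is that for each $i$, the function $u_i\mapsto W_{i+1}(u_i h)$ is compactly supported on $N_{i+1}\bs N_i$; here $W_{i+1}$ is the \emph{partial integral}, not $\typSh$ itself. Knowing that each partial integral has compactly supported integrand tells you the iterated integral makes sense, but it does \emph{not} tell you that $u\mapsto\typSh(ug)$ is compactly supported on the full quotient: the vanishing of the integral $W_j(u_{j-1}\cdots u_0 g)$ does not force the vanishing of $\typSh(u_{(m-1)n-1}\cdots u_0 g)$ for all choices of the inner variables. (A toy abelian model already exhibits this: with $N_0=F^2$, $N_1=F\times 0$, $N_2=0$ and $W_2(x,y)=\phi(x)\psi(xy)$ for $\phi$ Schwartz, both one-step supports are compact while $W_2$ is not compactly supported on $F^2$.)

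The paper supplies the missing compact-support argument by a different route: one inserts the Weyl element $\ww=\iota(w_m)$ using the equivariance relation \eqref{eq: w0}, expands $\typSh=\trns\typZe$ via Lemma~\ref{L: support} as an integral over $U'\cap\bar U$, and then conjugates the $N_\mir$-variable past $\ww$ to land in $\bar U_\mir$. The resulting double integrand lives entirely in $\mir\cap\bar N$, where Lemma~\ref{L: support} gives compact support of $\typZe$ directly. Your inductive scheme cannot replace this step without some additional input exploiting the full $U'$-equivariance of $\typSh$.
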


\begin{proof}
From Proposition~\ref{prop: trans2} we only need to check that the integrand is compactly supported.
Assume that $\typSh=\trns\typZe$. By \eqref{eq: w0} the integral equals
\begin{multline*}
\omega_\pi(-1)^{m\choose2} \int_{N\cap U'\bs N_\mir}\typSh(\ww u\cdot)\fpsi_N(u)^{-1}\ du
\\=\omega_\pi(-1)^{m\choose2} \int_{U\cap U'\bs U_\mir}(\int_{U'\cap \bar U}
\typZe(v \ww u\cdot)\fpsi_{U'}(v)^{-1}\fpsi_N(u)^{-1}\ dv)\ du.
\end{multline*}
where $U_\mir=U\cap\mir$.
The latter double integral is
\[
\omega_\pi(-1)^{m\choose2} \int_{\bar U\cap U'\bs \bar U_\mir}(\int_{U'\cap \bar U}
\typZe(v \bar u\ww \cdot)\fpsi_{U'}(v)^{-1}\fpsi_N(\ww\bar u\ww)^{-1}\ dv)\ d\bar u.
\]
By Lemma~\ref{L: support} the integrand is compactly supported in $v$, $\bar u$.
Thus, the integrand on the right-hand side of \eqref{eq: inverse} is compactly supported.
\end{proof}

\subsection{}
The following is an analogue of \cite[Prospotion 2]{MR0404534}
\begin{lemma} \label{lem: compind}
Any non-zero $\mir$-invariant subspace of $\Ind_{U'}^{\mir}\fpsi_{U'}$ contains $\ind_{U'}^{\mir}\fpsi_{U'}$.
In particular, $\ind_{U'}^{\mir}\fpsi_{U'}$ is irreducible.
\end{lemma}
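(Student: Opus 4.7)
The plan is to adapt the classical Gelfand--Kazhdan / Bernstein--Zelevinsky argument, which is exactly the $m=1$ case of the statement (and appears as \cite[Proposition 2]{MR0404534}). In that case $\mir$ is the standard mirabolic and $U' = N$, and the proof proceeds in two main steps: a Mackey--Fourier localization to produce a compactly supported function inside any non-zero $\mir$-invariant subspace, followed by a transitivity argument.

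Structurally, after reordering the basis so that $e_n,e_{2n},\dots,e_{mn}$ occupy the last $m$ positions, $\mir$ is isomorphic to a ``block mirabolic'' $\GL_{m(n-1)} \ltimes \Mat_{m(n-1),m}$, generalizing the decomposition $\GL_{n-1} \ltimes F^{n-1}$ of the classical case. The unipotent $U'$ is compatible with this decomposition, and the abelian normal subgroup $V := \Mat_{m(n-1),m}$ meets $U'$ in a subspace on which $\fpsi_{U'}$ restricts to a non-trivial additive character; moreover, $\fpsi_{U'}$ inherits a non-degeneracy property from the fact that it picks up the super-diagonal entries within each $n \times n$ block.

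Given a non-zero $\mir$-invariant subspace $W \subset \Ind_{U'}^{\mir} \fpsi_{U'}$ and a non-zero $f \in W$, I would exploit stability of $W$ under convolution with $C_c^\infty(\mir)$: for any compact open subset $C$ of $\mir$ and any smooth function $\phi$ on $C$ we have $\int_C \phi(c) f(\cdot\, c)\, dc \in W$. The heart of the argument is to choose a unipotent subgroup complementary to $U'$ in a neighborhood of the identity, together with a compact open subset $C$ thereof, and to integrate $f$ against a suitable character so that the resulting element of $W$ is compactly supported modulo $U'$. This is the Mackey--Fourier localization: smoothness of $f$ together with the non-degeneracy of $\fpsi_{U'}$ on the complementary unipotent, and Fourier inversion on $V$ (or a quotient thereof), force the support to become compact modulo $U'$ after this averaging.

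Once $W$ contains a function of compact support modulo $U'$, one concludes: $\ind_{U'}^{\mir} \fpsi_{U'}$ is $\mir$-irreducibly generated by any non-zero element with sufficiently small compact support (standard for compact inductions: translates of a bump function span everything), and therefore $W \supset \ind_{U'}^{\mir} \fpsi_{U'}$. The irreducibility of $\ind_{U'}^{\mir}\fpsi_{U'}$ then follows immediately by applying the conclusion to any non-zero $\mir$-submodule of $\ind_{U'}^{\mir}\fpsi_{U'}$ itself. The main obstacle is the localization step: in the classical $m=1$ case it is a one-variable Fourier argument on $F$ using the non-triviality of $\psi$ on the ``last column'' of the mirabolic, but for general $m$ one must match the complementary unipotent with the non-degenerate part of $\fpsi_{U'}$ while respecting the block structure of $U'$, which involves some careful bookkeeping but no essentially new input.
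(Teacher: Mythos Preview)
Your proposal identifies the right template (Gelfand--Kazhdan) and the right ambient structure (the block-mirabolic decomposition $\mir \simeq \GL_{m(n-1)} \ltimes V$ with $V \simeq \Mat_{m(n-1),m}$), and the paper indeed says the proof of \cite{MR0404534} carries over verbatim. However, your final step has a gap. You write that once $W$ contains a function of compact support modulo $U'$ one is done because ``$\ind_{U'}^{\mir}\fpsi_{U'}$ is $\mir$-irreducibly generated by any non-zero element with sufficiently small compact support (standard for compact inductions: translates of a bump function span everything)''. This is not a general fact about compact inductions: $\ind_H^G\chi$ need not be irreducible, nor cyclic on an arbitrary non-zero compactly supported vector. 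What is true is that translates of a genuine \emph{bump} (supported in $U' K_0$ with $K_0$ so small that $U'\cap K_0\subset\ker\fpsi_{U'}$, and constant there) span $\ind$; but your localization step only promises compact support, not a bump, and bridging that gap is exactly the content of the lemma.

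The missing ingredient is what the paper isolates: the argument is by \emph{induction on $n$}. The three facts the paper records --- that $V$ is abelian, that the stabilizer of $\fpsi_V$ in $\mir/V\simeq\GL_{m(n-1)}$ is isomorphic to $\mir_{m,n-1}$, and that the orbit map $p\mapsto\fpsi_V(p^{-1}\cdot p)$ from $\mir$ to $\widehat V$ is open --- are precisely what make the Gelfand--Kazhdan induction go through unchanged. After Fourier-projecting a non-zero $f\in W$ onto the $\fpsi_V$-eigenspace along $V$, one lands in a non-zero $\mir_{m,n-1}$-submodule of the analogous induced space one level down, to which the inductive hypothesis applies; inflating back yields $\ind_{U'}^{\mir}\fpsi_{U'}\subset W$. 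Your sketch alludes to a ``transitivity argument'' in the opening paragraph but never actually invokes it; recasting your second step as this induction, with the stabilizer identification $\mathrm{Stab}_{\mir/V}(\fpsi_V)\simeq\mir_{m,n-1}$ made explicit, closes the gap and aligns your argument with the paper's.
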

The proof of \cite{MR0404534} (for the case $m=1$) applies word by word. One only needs to observe
that the unipotent radical $V$ of $\mir$ is abelian, the stabilizer of the character
$\fpsi_V$ under the action of $\mir$ modulo $V$ is isomorphic to $\mir_{m,n-1}$ and
the map $p\mapsto\fpsi_{V}(p^{-1}\cdot p)$ defines an open map $\mir\rightarrow\widehat{V}$

Let $Q$ be the stabilizer of $\spn\{e_{ni}:i=1,\dots,m\}$ in $G$ -- a maximal parabolic subgroup of $G$ of type $((n-1)m,m)$.
Thus, $Q=\mir\rtimes M'^{\fpsi}$ and $\delta_Q\rest_{M'^{\fpsi}}\equiv1$.

\begin{corollary} \label{cor: compind}
For any \hmgns\ $\sigma\in\Irr G$ the image of the restriction map
\begin{equation} \label{eq: restomir}
\typSh\mapsto\typSh\rest_{\mir},\ \ \Whit^{\fpsi_{U'}}(\sigma)\rightarrow\Ind_{U'}^{\mir}\fpsi_{U'}
\end{equation}
contains $\ind_{U'}^{\mir}\fpsi_{U'}$.
Equivalently, if $\sigma=\Sp(\pi,m)$, the image $\Kir^\psi(\sigma)$ of the restriction map
\[
\typSh\mapsto\typSh\rest_Q,\ \ \Whit^{\fpsi_{U'}}(\sigma)\rightarrow\Ind_{P'^{\fpsi}}^Q\omega_\pi^{\fpsi}
\]
contains $\ind_{P'^{\fpsi}}^Q\omega_\pi^\fpsi$ where $\omega_\pi^{\fpsi}$ is the character of $P'^{\fpsi}$
such that $\omega_\pi^{\fpsi}\rest_{U'}=\fpsi_{U'}$ and $\omega_\pi^{\fpsi}\circ\iota=\omega_\pi\circ\det$.
\end{corollary}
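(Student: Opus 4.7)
The plan is to deduce the corollary immediately from Lemma \ref{lem: compind}. That lemma says any non-zero $\mir$-invariant subspace of $\Ind_{U'}^\mir\fpsi_{U'}$ contains the compactly-induced subrepresentation, so it suffices to verify that the restriction map \eqref{eq: restomir} is non-zero.

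The non-vanishing follows from $G$-equivariance of the Shalika model. If $\typSh\rest_\mir\equiv 0$ for every $\typSh\in\Whit^{\fpsi_{U'}}(\sigma)$, then I apply this to a right translate $R(g)\typSh$ (which is still in the model, since it is a $G$-subrepresentation of $\Ind_{U'}^G\fpsi_{U'}$) to conclude that $\typSh(mg)=0$ for all $m\in\mir$ and $g\in G$. Taking $m=e$ gives $\typSh\equiv 0$, a contradiction. The image of \eqref{eq: restomir} is then a non-zero $\mir$-submodule of $\Ind_{U'}^\mir\fpsi_{U'}$, and Lemma \ref{lem: compind} yields the first assertion.

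For the equivalent reformulation, I use the decompositions $Q=\mir\rtimes M'^{\fpsi}$ and $P'^{\fpsi}=M'^{\fpsi}\ltimes U'$, with $P'^{\fpsi}\cap\mir=U'$. Since $P'^{\fpsi}$ is unimodular and $\delta_Q\rest_{M'^{\fpsi}}\equiv 1$, restriction to $\mir$ gives a canonical isomorphism $\Ind_{P'^{\fpsi}}^Q\omega_\pi^{\fpsi}\cong\Ind_{U'}^\mir\fpsi_{U'}$ of $\mir$-representations that matches compact induction with compact induction. Together with Lemma \ref{lem: equicchar}, which supplies the $\iota(\GL_m)$-equivariance $\typSh(\iota(l)g)=\omega_\pi(\det l)\typSh(g)$, this places $\typSh\rest_Q$ inside $\Ind_{P'^{\fpsi}}^Q\omega_\pi^{\fpsi}$ and makes the two formulations equivalent.

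There is no serious obstacle once Lemma \ref{lem: compind} is in hand; the argument is purely formal. The only mild care required is in checking the normalizations for the identification of the two induced representations, which is ensured by the triviality of $\delta_Q$ on $M'^{\fpsi}$ and the unimodularity of $P'^{\fpsi}$.
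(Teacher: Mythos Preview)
Your argument is correct and is precisely the approach the paper has in mind: the corollary is stated without proof because it follows immediately from Lemma~\ref{lem: compind} once one notes the restriction map has non-zero image, and the reformulation on $Q$ is a bookkeeping consequence of $Q=\mir\rtimes M'^{\fpsi}$ together with the $M'^{\fpsi}$-equivariance from Lemma~\ref{lem: equicchar}.
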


We will call $\Kir^\psi(\sigma)$ the Kirillov--Shalika model of $\sigma$.

\begin{lemma} \label{lem: compindi}
For any $i=0,\dots,(m-1)n-1$, the map
\[
\tilde\trns_i:W_i\mapsto\int_{U_i\cap U'\bs U_{i+1}\cap U'}W_i(u'\cdot)\fpsi_{U_{i+1}}(u')^{-1}\ du'
=\int_{U_i\cap\bar N\bs U_{i+1}\cap\bar N}W_i(u'\cdot)\ du'
\]
is an isomorphism between $\Ind_{\mir\cap U_i}^\mir\fpsi_{U_i}$ and
$\Ind_{\mir\cap U_{i+1}}^\mir\fpsi_{U_{i+1}}$, whose inverse is given by
\[
W_{i+1}\mapsto
\int_{\mir\cap U_i\cap U_{i+1}\bs\mir\cap U_i}W_{i+1}(u\cdot)\fpsi_{U_i}(u)^{-1}\ du
=\int_{N_\mir\cap U_{i+1}\bs N_\mir\cap U_i}W_{i+1}(u\cdot)\fpsi_{U_i}(u)^{-1}\ du.
\]
Moreover,
\[
\tilde\trns_i(\ind_{\mir\cap U_i}^\mir\fpsi_{U_i})=\ind_{\mir\cap U_{i+1}}^\mir\fpsi_{U_{i+1}}.
\]
Finally, the $\mir$-module $\ind_{\mir\cap U_i}^\mir\fpsi_{U_i}$ is irreducible.
\end{lemma}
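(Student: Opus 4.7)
The plan is to adapt the proof of Proposition \ref{prop: trans2} to the mirabolic setting, with Lemma \ref{lem: splitD} taking the role of Lemma \ref{lem: ds}. Fix $i$ and let $W_i \in \Ind_{\mir \cap U_i}^\mir \fpsi_{U_i}$. The identity $W_i(u'u) = \fpsi([u',u])\fpsi_{U_i}(u)W_i(u')$ still holds for $u \in \mir \cap U_i$ and $u' \in \mir \cap U_{i+1}$. Combined with the smoothness of $W_i$ and the perfect pairing supplied by Lemma \ref{lem: splitD} (between $(\mir \cap U_i)/(\mir \cap U_i \cap U_{i+1}) \simeq \mir \cap M_{i+1} \cap U_i$ and $U_{i+1}/(U_i \cap U_{i+1}) \simeq M_i \cap U_{i+1}$), this forces the restriction of $W_i$ to each coset of $\mir \cap U_{i+1}$ in $\mir$ to be compactly supported modulo $\mir \cap U_i \cap U_{i+1}$. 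Hence $\tilde\trns_i W_i$ is defined by a convergent integral, and the equivalence of the two integral expressions follows by a change of variables on the abelian quotient $U_{i+1}/(U_i \cap U_{i+1})$, exactly as in Proposition \ref{prop: trans2}.

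A direct change of variables then verifies that $\tilde\trns_i W_i$ lies in $\Ind_{\mir \cap U_{i+1}}^\mir \fpsi_{U_{i+1}}$, and the right $\mir$-equivariance is immediate. The proposed inverse formula arises from Fourier inversion on the abelian group $(\mir \cap U_i)/(\mir \cap U_i \cap U_{i+1})$ against the duality of Lemma \ref{lem: splitD}, paralleling the last step of Proposition \ref{prop: trans2}; compact support of the inverse integrand is obtained by the same smoothness argument applied on the other side. This establishes the isomorphism $\tilde\trns_i : \Ind_{\mir \cap U_i}^\mir \fpsi_{U_i} \to \Ind_{\mir \cap U_{i+1}}^\mir \fpsi_{U_{i+1}}$ together with the claimed inverse.

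For preservation of the compactly induced subspaces, a routine support computation shows that both $\tilde\trns_i$ and its inverse carry compact support modulo $\mir \cap U_i$ to compact support modulo $\mir \cap U_{i+1}$, and conversely. Finally, composing $\tilde\trns_{(m-1)n-1} \circ \dots \circ \tilde\trns_i$ yields a $\mir$-equivariant isomorphism from $\ind_{\mir \cap U_i}^\mir \fpsi_{U_i}$ onto $\ind_{U'}^\mir \fpsi_{U'}$ (note $\mir \cap U_{(m-1)n} = U'$), and the latter is irreducible by Lemma \ref{lem: compind}; hence each $\ind_{\mir \cap U_i}^\mir \fpsi_{U_i}$ is irreducible. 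The hard part will be justifying the compact support of the integrands, since we now work directly on $\Ind_{\mir \cap U_i}^\mir \fpsi_{U_i}$ without an ambient irreducible $G$-representation and its multiplicity-one Whittaker functionals; this step rests entirely on smoothness combined with the non-degeneracy of the pairing from Lemma \ref{lem: splitD} on the mirabolic side.
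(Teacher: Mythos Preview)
Your proposal is correct and follows essentially the same approach as the paper: adapt the Fourier-inversion argument of Proposition~\ref{prop: trans2} using Lemma~\ref{lem: splitD} in place of Lemma~\ref{lem: ds}, check that $\tilde\trns_i$ preserves the compactly induced subspaces by a support argument, and deduce irreducibility by transporting to $\ind_{U'}^{\mir}\fpsi_{U'}$ and invoking Lemma~\ref{lem: compind}. Your closing observation is exactly the point: on the $\mir$-side the pairing of Lemma~\ref{lem: splitD} is already perfect, so the multiplicity-one invariance under $M_{i+1}^{\fpsi}$ used in Proposition~\ref{prop: trans2} is not needed.
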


\begin{proof}
Let $W_i\in\Ind_{\mir\cap U_i}^\mir\fpsi_{U_i}$.
As in the proof of Proposition \ref{prop: trans2}, by Lemma \ref{lem: splitD}
the function
\[
u\in\mir\cap U_i\cap U_{i+1}\bs\mir\cap U_i\mapsto\fpsi_{U_i}(u)^{-1}\trns_iW_i(u)
\]
is the Fourier transform of the function $W_i\fpsi_{U_{i+1}}^{-1}\rest_{U_i\cap U'\bs U_{i+1}\cap U'}$ at $\cmap_i(u)$.
The first claim follows by Fourier inversion.

Suppose that $W_i\in \ind_{\mir\cap U_i}^\mir\fpsi_{U_i}$.
From the definition (and since $U'\subset\mir$) $\tilde\trns_iW_i$ is supported on $(\mir\cap U_i\cdot U_{i+1})\Omega$
where $\Omega$ is a compact subset of $\mir$.
Fix $g\in \Omega$. It follows from the above that the function $\tilde\trns_iW_i(g)$ is compactly supported modulo $\mir\cap U_i\cap U_{i+1}$.
Hence, the $\tilde\trns_iW_i$ is compactly supported modulo $\mir\cap U_{i+1}$.

The last part now follows from the fact that $\ind_{U'}^\mir\fpsi_{U'}$ is irreducible.
\end{proof}

From Lemma \ref{lem: compindi}, Proposition  \ref{prop: trans2} and Corollary \ref{cor: compind} we obtain
\begin{corollary}\label{C: restomir2}
For any \hmgns\ $\sigma\in\Irr G$ and for any $i=0,\ldots,(m-1)n$, the image of the restriction map
\begin{equation} \label{eq: restgen}
W_i\mapsto W_i\rest_{\mir},\ \ \Whit^{\fpsi_{U_i}}(\sigma)\rightarrow\Ind_{U_i\cap\mir}^{\mir}\fpsi_{U_i}
\end{equation}
contains $\ind_{U_i\cap\mir}^{\mir}\fpsi_{U_i}$.
\end{corollary}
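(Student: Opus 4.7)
The plan is to prove the statement by downward induction on $i$, with the base case $i=(m-1)n$ being exactly Corollary~\ref{cor: compind}. The inductive step will relate the situations at $i$ and $i+1$ via the $G$-isomorphism $\trns_i$ of Proposition~\ref{prop: trans2} and its $\mir$-level analogue $\tilde\trns_i$ from Lemma~\ref{lem: compindi}.

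The heart of the argument will be the commutativity of the diagram
\[
\begin{CD}
\Whit^{\fpsi_{U_i}}(\sigma) @>\trns_i>> \Whit^{\fpsi_{U_{i+1}}}(\sigma) \\
@V\rest_\mir VV @VV\rest_\mir V \\
\Ind_{\mir \cap U_i}^{\mir}\fpsi_{U_i} @>\tilde\trns_i>> \Ind_{\mir \cap U_{i+1}}^{\mir}\fpsi_{U_{i+1}}.
\end{CD}
\]
This will follow immediately from the fact that both horizontal arrows are given by integration over the same set $U_i \cap \bar N \bs U_{i+1} \cap \bar N$, together with the observation that $U_{i+1} \cap \bar N \subset U' \cap \bar N \subset \mir$, using the chain $U_i \cap \bar N \subset U_{i+1} \cap \bar N$ recorded after the definition of the $U_i$, and the inclusion $U' \subset \mir$ noted when $\mir$ was introduced. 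Thus for any $g \in \mir$ the integrand $u' \mapsto W_i(u'g)$ involves only elements of $\mir$ and depends solely on $W_i \rest_\mir$, yielding $(\trns_i W_i)\rest_\mir = \tilde\trns_i(W_i \rest_\mir)$.

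The inductive step then goes as follows. Given $f \in \ind_{\mir \cap U_i}^{\mir}\fpsi_{U_i}$, Lemma~\ref{lem: compindi} places $\tilde\trns_i f$ in $\ind_{\mir \cap U_{i+1}}^{\mir}\fpsi_{U_{i+1}}$, so the inductive hypothesis furnishes $W_{i+1} \in \Whit^{\fpsi_{U_{i+1}}}(\sigma)$ with $W_{i+1} \rest_\mir = \tilde\trns_i f$. Setting $W_i := \trns_i^{-1} W_{i+1}$, the commutative diagram together with the injectivity of $\tilde\trns_i$ will force $W_i \rest_\mir = f$, completing the step. The only non-formal point is the restriction-compatibility of the two transition operators, and as indicated this reduces to the single geometric inclusion $U_{i+1} \cap \bar N \subset \mir$; beyond that the proof is a direct diagram chase, and no further analytic input beyond Proposition~\ref{prop: trans2}, Lemma~\ref{lem: compindi}, and Corollary~\ref{cor: compind} is required.
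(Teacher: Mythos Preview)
Your proof is correct and is essentially the approach the paper has in mind: the paper simply cites Lemma~\ref{lem: compindi}, Proposition~\ref{prop: trans2}, and Corollary~\ref{cor: compind} without writing out the diagram chase, and you have filled in precisely the commutative-diagram argument (downward induction from $i=(m-1)n$, compatibility of $\trns_i$ and $\tilde\trns_i$ with restriction to $\mir$ via $U_{i+1}\cap\bar N\subset U'\subset\mir$) that makes these three ingredients combine to give the result.
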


Once again, in analogy with the case $m=1$ (conjectured in \cite{MR0404534}, proved in \cite{MR0579172})
it is natural to make the following
\begin{conjecture} \label{conj: injvtve}
For any \hmgns\ $\sigma\in\Irr G$ the restriction map \eqref{eq: restomir} (or equivalently, \eqref{eq: restgen}) is injective.
\end{conjecture}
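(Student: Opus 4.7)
The natural first move is to use the transition maps to reduce the conjecture to a more convenient model. By Proposition~\ref{prop: trans2} together with Lemma~\ref{lem: compindi}, the operators $\trns_i$ are $\mir$-equivariant isomorphisms that intertwine the various restriction maps \eqref{eq: restgen}, so the conjecture is equivalent to the injectivity of $\Whit^{\fpsi_{U_i}}(\sigma)\to \Ind_{\mir\cap U_i}^\mir \fpsi_{U_i}$ for any single value of $i$. I would focus on the Zelevinsky case $i=0$, where the unipotent subgroup is simply $N$ and where Lemma~\ref{L: support} already forces any $\typZe\in\Whit^{\fpsi_N}(\sigma)$ to be compactly supported on $\bar N\cap\mir$. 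The claim becomes: if $\typZe\rest_\mir\equiv 0$ then $\typZe=0$.

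Let $K$ denote the kernel of this restriction map; it is a $\mir$-submodule of $\Whit^{\fpsi_N}(\sigma)$. By Corollary~\ref{C: restomir2} and Lemma~\ref{lem: compindi}, the image contains the irreducible $\mir$-module $\ind_{N_\mir}^\mir \fpsi_N$, so $K$ is at least a proper $\mir$-submodule. The strategy I would pursue is to construct a finite $\mir$-stable filtration of $\Whit^{\fpsi_N}(\sigma)$ whose successive quotients are Kirillov-type pieces (compactly induced characters on subgroups of $\mir$ from Speh representations of smaller rank, suitably twisted) and then to show by induction on $m$ that each graded piece is faithfully detected by restriction to $\mir$. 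This would mirror the Bernstein--Zelevinsky derivative argument in the generic case $m=1$.

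To produce the top layer of such a filtration one would analyze $J_Q(\sigma)$ for the maximal parabolic $Q$ of type $((m-1)n,n)$. The structural proposition of \S\ref{sec: mhmgns} identifies the direct summand of $J_Q(\sigma)$ indexed by the supercuspidal data $\cs(\pi)\abs{\cdot}^{\frac{m-1}2}$ as $\Sp(\pi,m-1)\abs{\cdot}^{-\frac12}\otimes \pi\abs{\cdot}^{\frac{m-1}2}$, which is the anticipated ``top'' of the would-be derivative of $\sigma$ and on which the induction hypothesis for $m-1$ would apply. The remaining direct summands, whose shape is controlled by Lemma~\ref{lem: Jmlad} in the ladder case and by the decomposition $\sigma=\sigma_1\times\dots\times\sigma_t$ in general, would form the lower layers.

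The principal obstacle, already flagged in the introduction, is that no analog of the Bernstein--Zelevinsky theory of derivatives is presently available for $\Sp(\pi,m)$: it is not even known whether $\sigma\rest_\mir$ (or the restriction to a parabolic of type $((m-1)n,n)$) is of finite length, let alone that it admits a filtration with the required graded pieces. Producing such a filtration, or some substitute with enough structure to run the inductive injectivity argument, is the crux of the problem. A complementary route, which sidesteps the purely local structural question, would be to exploit the realization of $\Sp(\pi,m)$ as a local component of a residual automorphic representation and compare Fourier coefficients along $N_\mir$ with those along $U'$; but this presupposes the regularized global unfolding that the paper explicitly postpones.
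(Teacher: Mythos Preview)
The statement is a \emph{conjecture}, and the paper does not prove it in general; it establishes only the special case $\pi\in\Irr_{\AT}$ (Corollary~\ref{cor: inj}). Your proposal is an outline for an attack on the full statement, and you correctly identify its own gap: without an analogue of the Bernstein--Zelevinsky theory of derivatives for $m>1$, the inductive filtration you describe cannot be built. So as a proof of the conjecture your argument is incomplete, and you already know where it breaks.

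More to the point, the paper's partial result is obtained by an entirely different method which you do not mention. Rather than analyzing $\sigma\rest_\mir$ structurally, the paper constructs the bilinear form
\[
\Bil_0(\typZe,\typZe^\vee)=\int_{N_\mir\bs\mir}\typZe(g)\typZe^\vee(g)\ dg
\]
on $\Whit^{\fpsi_N}(\sigma)\times\Whit^{\fpsi_N^{-1}}(\sigma^\vee)$, shows that for $\pi$ approximately tempered it converges (Proposition~\ref{prop: unit}) and is $G$-invariant (Proposition~\ref{prop: inv}, by induction on $m$ using Bernstein's theorem on the mirabolic), and is not identically zero. Since $\Bil_0$ is a nondegenerate $G$-invariant pairing that manifestly depends only on $\typZe\rest_\mir$, any $\typZe$ vanishing on $\mir$ lies in the kernel of a nondegenerate pairing, hence is zero. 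This argument is short and avoids any structural description of $\sigma\rest_\mir$, at the cost of requiring convergence of $\Bil_0$ at $s=0$, which is exactly why the \AT\ hypothesis enters. Your filtration approach, if it could be made to work, would presumably remove that hypothesis; but as you note, the necessary input is currently unavailable.
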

We will prove a special case in Corollary \ref{cor: inj} below.

We do not know whether in general, the restriction of $\sigma$ to $Q$ is of finite length.
(See Proposition \ref{prop: structrest} for a very special case.)
Recall that in the case $m=1$ this is known (for any $\pi\in\Irr$, not necessarily generic)
using the theory of derivatives of Bernstein--Zelevinsky \cite{MR0579172}.
It would be very interesting to have an analogous theory for $m>1$.

\section{Unitary structure} \label{sec: unit}

We take the Haar measure on $\GL_r$ to be the normalized Tamagawa measure with respect to $\psi$, i.e.
$\prod_{i=1}^r(1-q^{-i})^{-1}$ times the Haar measure associated to the standard gauge form
$\frac{\bigwedge_{i,j=1,\dots,r}dg_{i,j}}{\det g^r}$ on $\GL_r$ and the self-dual Haar measure on $F$ with respect to $\psi$.
Following our convention on Haar measures for unipotent groups (see \S\ref{sec: Haar}) we obtain a (right) Haar measure
on the $F$-points of any algebraic group whose reductive part is a product of $\GL_r$'s.
This will cover all algebraic groups considered here.

Let $\pi'\in\Irr_{\gen}\GL_n$ be another irreducible generic representation of $\GL_n$ and let $\sigma'=\Sp(\pi',m)$.

For any $0\le i\le (m-1)n$ and $s\in\C$ we define a bilinear form on
$\Whit^{\fpsi_{U_i}}(\sigma)\times\Whit^{\fpsi_{U_i}^{-1}}(\sigma')$ by
\[
\Bil_i(W_i,W_i',s)=\int_{\mir\cap U_i\bs\mir}W_i(g)W_i'(g)\abs{\det g}^s\ dg
\]
(assuming convergent).
In particular, for $\typZe\in\Whit^{\fpsi_N}(\sigma)$, $\typZe'\in\Whit^{\fpsi_N^{-1}}(\sigma')$
\[
\Bil_0(\typZe,\typZe',s)=\int_{N_\mir\bs\mir}\typZe(g)\typZe'(g)\abs{\det g}^s\ dg,
\]
and for $\typSh\in\Whit^{\fpsi_{U'}}(\sigma)$, $\typSh'\in\Whit^{\fpsi_{U'}^{-1}}(\sigma')$
\[
\Bil_{\Sh}(\typSh,\typSh',s):=\Bil_{(m-1)n}(\typSh,\typSh',s)=\int_{U'\bs\mir}\typSh(g)\typSh'(g)\abs{\det g}^s\ dg.
\]

It follows from Lemma \ref{lem: suppWsh} that $\abs{\det}$ is bounded above on the support of $\typSh\rest_{\mir}$.
Hence, if $\Bil_{\Sh}(\typSh,\typSh',s)$ converges absolutely at $s_0\in\R$
then it converges absolutely for any $s$ with $\Re s\ge s_0$.
A similar statement holds for any $\Bil_i$ although we will not use it.

We also write $\Bil_i(W_i,W_i')=\Bil(W_i,W_i',0)$ assuming the latter is well-defined
(either as a convergent integral, or by analytic continuation), in which case it is $\mir$-invariant.

In general, we do not know whether $\Bil_i(\cdot,\cdot)$ is always defined.

\begin{proposition} \label{prop: unit}
The integral defining $\Bil_i(W_i,W_i',s)$ converges for $\Re s+\expo(\pi)+\expo(\pi')+1>0$.
Moreover, for all $0\le i<(m-1)n$, $W_i\in\Whit^{\fpsi_{U_i}}(\sigma)$,
$W_i'\in\Whit^{\fpsi_{U_i}^{-1}}(\sigma')$ we have
\begin{equation} \label{eq: Bilii+1}
\Bil_{i+1}(\trns_i^\psi W_i,\trns_i^{\psi^{-1}}W_i',s)=\Bil_i(W_i,W_i',s).
\end{equation}
Finally, there exist $W_i\in\Whit^{\fpsi_{U_i}}(\sigma)$ and
$W_i'\in\Whit^{\fpsi_{U_i}^{-1}}(\sigma')$ such that $\Bil_i(W_i,W_i',s)\equiv1$ for all $s\in\C$.
\end{proposition}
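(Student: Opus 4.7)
My plan is to establish the three claims in the order (3), (1), (2), with (1) and (2) intertwined: absolute convergence in some half-plane is needed to justify Fubini in (2), which then propagates the convergence to all $i$. I would handle (3) first. By Corollary~\ref{C: restomir2}, the restriction map $\Whit^{\fpsi_{U_i}}(\sigma)\to\Ind_{U_i\cap\mir}^\mir\fpsi_{U_i}$ contains $\ind_{U_i\cap\mir}^\mir\fpsi_{U_i}$ in its image, and similarly for $\sigma'$ with $\fpsi_{U_i}^{-1}$. Pick compact open neighborhoods $\Omega,\Omega'\subset\mir\cap G(\OOO)$ of the identity, small enough that $(U_i\cap\mir)\times\Omega\to\mir$ and $(U_i\cap\mir)\times\Omega'\to\mir$ are injective; take $W_i,W_i'$ whose restrictions to $\mir$ are the $\fpsi_{U_i}$- and $\fpsi_{U_i}^{-1}$-equivariant characteristic functions of $(U_i\cap\mir)\Omega$ and $(U_i\cap\mir)\Omega'$ respectively. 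Since $\abs{\det g}=1$ on $G(\OOO)$, the integrand of $\Bil_i(W_i,W_i',s)$ on $U_i\cap\mir\bs\mir$ is just the characteristic function of the image of $\Omega\cap\Omega'$, so $\Bil_i(W_i,W_i',s)=\operatorname{vol}(\Omega\cap\Omega')$ independently of $s$; rescale to normalize this to $1$.

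\medskip

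For (1) at $i=0$, by \eqref{eq: restW} we have $\typZe\rest_M=\delta_P^{1/2}{\delta'}^{-1}\cdot W$ for some Whittaker function $W$ of $\pi^{\otimes m}$, and analogously for $\typZe'$. I would apply an Iwasawa-type decomposition $\mir=(M\cap\mir)(U\cap\mir)K_\mir$ with $M\cap\mir\simeq\mir_{1,n}^m$ (one classical $\GL_n$-mirabolic per block) and $K_\mir=\mir\cap G(\OOO)$. Since $U\cap\mir\subset N_\mir$, the $(U\cap\mir)$-integration is subsumed into the quotient $N_\mir\bs\mir$, and $K$-finiteness turns the $K_\mir$-integration into a finite sum. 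The remaining integral reduces to a sum of products over $j=1,\dots,m$ of classical Jacquet--Piatetski-Shapiro--Shalika Whittaker--mirabolic pairings for $\pi\times\pi'$ on $\GL_n$. After the $\delta_P^{1/2}{\delta'}^{-1}$-shifts on both sides combine with the modular function of the ``parabolic'' $\mir\cap P$ of $\mir$, the uniform condition $\Re s+\expo(\pi)+\expo(\pi')+1>0$ is obtained from the standard asymptotic bounds on Whittaker functions.

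\medskip

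For (2), I would use the inverse form of the transition map,
\[
\trns_i^{-1}W_{i+1}(g)=\int_{N_\mir\cap U_{i+1}\bs N_\mir\cap U_i}W_{i+1}(ug)\fpsi_{U_i}(u)^{-1}\,du,
\]
from Proposition~\ref{prop: trans2}, whose integration domain lies inside $\mir\cap U_i$. Substituting into $\Bil_i(\trns_i^{-1}W_{i+1},\trns_i^{-1}W_{i+1}',s)$ and exchanging the order of integration via Fubini (justified in the absolute-convergence half-plane from (1)), the inner integrations combine with the outer $\mir\cap U_i\bs\mir$ integration, and the $\fpsi_{U_i}$-characters cancel pairwise via the Fourier duality of Lemma~\ref{lem: splitD}, rewriting the combined integral as $\Bil_{i+1}(W_{i+1},W_{i+1}',s)$. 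The half-plane of convergence in (1) then propagates from $i=0$ to all $i$ by induction.

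\medskip

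The hardest step will be the precise measure-theoretic bookkeeping in (2)---verifying that the Fourier duality of Lemma~\ref{lem: splitD} together with the self-dual Haar normalization of \S\ref{sec: Haar} yields the correct identification of integration domains without extraneous volume constants. A secondary subtlety is the uniform JPSS threshold in (1), which relies on a delicate cancellation among the $\delta$-factors from \eqref{eq: restW} and the modular function of $\mir\cap P$ to prevent any single block from dominating and forcing a more restrictive half-plane.
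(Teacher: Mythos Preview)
Your treatment of the last claim (existence of $W_i,W_i'$ with $\Bil_i\equiv1$) via Corollary~\ref{C: restomir2} is correct and matches the paper.

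For the convergence claim there is a genuine gap: the ``Iwasawa-type decomposition'' $\mir=(M\cap\mir)(U\cap\mir)K_\mir$ you invoke is \emph{false} in general. Take $m=n=2$, choose $k^{-1}\in G(\OOO)$ with second row $(\varpi,0,1,0)$ and fourth row $e_4$, and $p\in P$ with the same second and fourth rows (e.g.\ $p=\sm{A}{B}{0}{I_2}$ with $A=\sm{0}{1}{\varpi}{0}$, $B=\sm{0}{0}{1}{0}$). Then $g=pk\in\mir$, but in any factorization $g=p'k'$ with $p'\in P$, $k'\in K$ the element $q=p'^{-1}p=k'k^{-1}\in P\cap K$ must have second row $(\varpi,0,1,0)$, forcing the second row of its upper-left $2\times2$ block to be $(\varpi,0)$, which is impossible for a matrix in $\GL_2(\OOO)$. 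So $g\notin(P\cap\mir)K_\mir$. The paper sidesteps this entirely: after twisting and the inequality $|xy|\le\tfrac12(|x|^2+|y|^2)$ to reduce to $s=0$, $\pi'=\bar\pi$, $\typZe'=\overline{\typZe}$, it proves instead the convergence of
\[
\int_{N_\mir\bs G}|\typZe(g)|^2\,\Phi(\eta g)\,|\det g|^m\,dg
\]
for $0\le\Phi\in\swrz(\Mat_{m,nm}(F))$, which majorizes $\Bil_0$ since $\mir=\operatorname{Stab}_G(\eta)$ with modulus $|\det|^m$. Now the \emph{ordinary} Iwasawa decomposition of $G$ applies: the integral splits over $P\bs G$, $N_M\bs M$ and $U_\mir\bs U$, and via \eqref{eq: restW} and \eqref{eq: relPhiPsi} the inner part becomes a finite sum of products of genuine Rankin--Selberg integrals $Z^{\GL_n}$ for $\pi\times\bar\pi$ at the points $1,\dots,m$, whose convergence is standard.

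For the transition identity your plan (substitute $\trns_i^{-1}$, apply Fubini, invoke Fourier duality) is in the right direction but imprecise: the phrase ``the $\fpsi_{U_i}$-characters cancel pairwise'' hides the mechanism, and the Fubini step would require absolute convergence of a triple integral not directly majorized by either $\Bil_i$ or $\Bil_{i+1}$. The paper's argument is cleaner and avoids this: by \eqref{eq: FT}, $\trns_i^\psi W_i$ restricted to the fiber $\mir\cap U_iU_{i+1}/(\mir\cap U_{i+1})$ is literally the Fourier transform of $W_i$ on the dual fiber $\mir\cap U_iU_{i+1}/(\mir\cap U_i)$, and Plancherel gives the unconditional fiber identity \eqref{eq: unitstep} between compactly supported integrals. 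One then concludes that if either of $\Bil_i$, $\Bil_{i+1}$ converges absolutely so does the other and they agree---exactly what propagates convergence from $i=0$ to all $i$.
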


\begin{remark}
In the next section we prove that $\Bil_i(W_i,W_i',s)$ admits meromorphic continuation in $s$ to a rational function in $q^s$.
\end{remark}

\begin{proof}
First note that the last statement follows from Corollary \ref{C: restomir2}.

Next, we show the convergence of the integral defining $\Bil_0$.
Upon twisting $\pi$ and $\pi'$ by $\abs{\cdot}^{(s+\expo(\pi')-\expo(\pi))/2}$
and $\abs{\cdot}^{(s+\expo(\pi)-\expo(\pi'))/2}$ respectively and using the inequality
$\abs{xy}\le(\abs{x}^2+\abs{y^2})/2$
we may assume without loss of generality that $\pi'=\overline{\pi}$, $\typZe'=\overline{\typZe}$ and $s=0$.
Thus, we need to show the convergence of
\[
\int_{N_\mir\bs\mir}\abs{\typZe(g)}^2\ dg
\]
provided that $\expo(\pi)>-\frac12$.
In fact, we show a slightly stronger assertion, namely the convergence of
\begin{equation} \label{eq: strconv}
\int_{\mir\bs G}\int_{N_\mir\bs\mir}\abs{\typZe(lg)}^2\ dl\ \Phi(\eta g)\abs{\det g}^m\ dg
\end{equation}
for any $0\le\Phi\in\swrz(\Mat_{m,nm}(F))$
where $\eta\in\Mat_{m,nm}(F)$ is the matrix whose $i$-th row is $e_{ni}$, $i=1,\dots,m$.
Note that the stabilizer of $\eta$ under the right $G$-action on $\Mat_{m,nm}(F)$ is $\mir$.
Since the modulus character of $\mir$ is $\abs{\det}^m$, \eqref{eq: strconv} is formally well-defined
and can be rewritten as
\begin{multline} \label{eq: dcmp}
\int_{N_\mir\bs G}\abs{\typZe(g)}^2\Phi(\eta g)\abs{\det g}^{m}\ dg\\=
\int_{P\bs G}\int_{N\bs P}\abs{\typZe(lg)}^2\int_{N_\mir\bs N}\Phi(\eta ulg)\ du
\ \abs{\det l}^m\delta_P(l)^{-1}\ dl\ \abs{\det g}^m\ dg\\=
\int_{P\bs G}\int_{N_M\bs M}\abs{\typZe(lg)}^2\int_{U_\mir\bs U}\Phi(\eta ulg)\ du
\ \abs{\det l}^m\delta_P(l)^{-1}\ dl\ \abs{\det g}^m\ dg.
\end{multline}
We may identify the vector space $\Mat_{m,nm}(F)$ with $\Mat_{m,m}(F^n)$.
Observe that for any $l=\diag(g_1,\dots,g_m)\in M$, $g\in G$ we have
\begin{equation} \label{eq: relPhiPsi}
\abs{\det l}^{\frac{m-1}2}\int_{U_\mir\bs U}\Phi(\eta ulg)\ du=\tilde\Phi_g(e_ng_1,\dots,e_ng_m)\delta'(l)
\end{equation}
where $\tilde\Phi_g\in\swrz((F^n)^m)$ is the function
\begin{equation} \label{def: Psi}
\tilde\Phi_g(v_1,\dots,v_m)=\int\Phi(Xg)\ dX,\ \ v_1,\dots,v_m\in F^n,
\end{equation}
where the integral is taken over the $n{m\choose 2}$-dimensional affine space of upper triangular $F^n$-valued $m\times m$-matrices
whose diagonal entries are $v_1,\dots,v_m$.
Thus, \eqref{eq: dcmp} is equal to
\begin{multline*}
\int_{P\bs G}\int_{(N_n\bs\GL_n)^m}\abs{\delta_P^{-\frac12}\delta'(l)\typZe(lg)}^2
\tilde\Phi_g(e_ng_1,\dots,e_ng_m)\prod_{i=1}^m\abs{\det g_i}^i\ dg_1\dots dg_m
\\\abs{\det g}^m\ dg
\end{multline*}
where $l=\diag(g_1,\dots,g_m)\in M$.
Thus, by \eqref{eq: restW} the inner integral is
a finite linear combination of products of Rankin--Selberg integrals for $\pi\times\bar\pi$ at $i$, $i=1,\dots,m$.
The assumption that $\expo(\pi)>-\frac12$ guarantees that these Rankin--Selberg integrals converge.
Since the outer integral is a finite sum, we obtain the convergence of \eqref{eq: strconv}.

Now let $0\le i<(m-1)n$, $W_i\in\Whit^{\fpsi_{U_i}}(\sigma)$, $W_i'\in\Whit^{\fpsi_{U_i}^{-1}}(\sigma')$.
Recall that $W_i\rest_{U_{i+1}}$ (resp., $\trns_i^\psi W_i\rest_{U_i\cap\mir}$) is compactly supported modulo
$U_i\cap U_{i+1}$ (resp., $U_i\cap U_{i+1}\cap\mir$).

Moreover, by the unitarity of Fourier transform and the argument of Proposition \ref{prop: trans2}
(cf. \eqref{eq: FT}) we have
\begin{multline} \label{eq: unitstep}
\int_{\mir\cap U_{i+1}\bs\mir\cap U_iU_{i+1}}\trns_i^\psi W_i(u)\trns_i^{\psi^{-1}}W_i'(u)\ du=
\int_{N_\mir\cap U_{i+1}\bs N_\mir\cap U_i}\trns_i^\psi W_i(u)\trns_i^{\psi^{-1}}W_i'(u)\ du\\=
\int_{U_i\bs U_iU_{i+1}}W_i(u)W_i'(u)\ du=
\int_{\mir\cap U_i\bs\mir\cap U_iU_{i+1}}W_i(u)W_i'(u)\ du
\end{multline}
where the integrals are absolutely convergent.
(We can also write the integrals as
\[
\int_{U_i\cap U'\bs U_{i+1}\cap U'}W_i(u)W_i'(u)\ du=
\int_{U_i\cap\bar N\bs U_{i+1}\cap\bar N}W_i(u)W_i'(u)\ du.)
\]
It follows that if at least one of integrals
\[
\int_{\mir\cap U_i\bs\mir}\abs{W_i(g)}^2+\abs{W_i'(g)}^2\ dg
\text{ or }
\int_{\mir\cap U_{i+1}\bs\mir}\abs{\trns_iW_i(g)}^2+\abs{\trns_iW_i'(g)}^2\ dg
\]
converges then so is the other and %$\Bil_{i+1}(\trns_i^\psi W_i,\trns_i^{\psi^{-1}}W_i')=\Bil_i(W_i,W_i')$.
\begin{multline*}
\Bil_{i+1}(\trns_i^\psi W_i,\trns_i^{\psi^{-1}}W_i')=
\int_{\mir\cap U_iU_{i+1}\bs\mir}\int_{\mir\cap U_{i+1}\bs\mir\cap U_iU_{i+1}}
\trns_i^\psi W_i(ug)\trns_i^{\psi^{-1}}W_i'(ug)\ du\ dg\\=
\int_{\mir\cap U_iU_{i+1}\bs\mir}\int_{\mir\cap U_i\bs\mir\cap U_iU_{i+1}}W_i(ug)W_i'(ug)\ du\ dg=
\Bil_i(W_i,W_i').
\end{multline*}

We can now conclude the convergence for all $i$
and the identity \eqref{eq: Bilii+1} since they clearly reduce to the case $s=0$.
\end{proof}

\begin{proposition} \label{prop: inv}
Suppose that $\pi'=\pi^\vee$ (or equivalently, $\sigma'=\sigma^\vee$) and $\pi$ is \AT.
Then $\Bil_i(W_i,W_i')$ is a well-defined $G$-invariant pairing on
$\Whit^{\fpsi_{U_i}}(\sigma)\times\Whit^{\fpsi_{U_i}^{-1}}(\sigma^\vee)$.
\end{proposition}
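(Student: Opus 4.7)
The plan has three parts: convergence, reduction to the Zelevinsky model, and promotion of $\mir$-invariance to $G$-invariance by induction on $m$. Convergence of $\Bil_i(W_i,W_i') := \Bil_i(W_i,W_i',0)$ is immediate from Proposition~\ref{prop: unit}, since the \AT\ hypothesis on $\pi$ gives $\expo(\pi)+\expo(\pi^\vee)+1>0$. The transition maps $\trns_i^\psi$ of Proposition~\ref{prop: trans2} are $G$-equivariant isomorphisms and intertwine the pairings via \eqref{eq: Bilii+1}, so it suffices to prove $G$-invariance of $\Bil_0$ on the Zelevinsky model. The $\mir$-invariance of $\Bil_0$ is manifest from the definition, since the integration is over $N_\mir\bs\mir$; the real work lies in passing from $\mir$-invariance to full $G$-invariance.

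I would carry out this promotion by induction on $m$. The base case $m=1$ is Bernstein's classical theorem on the unitary structure of the Whittaker model of a generic \AT\ representation: under \AT, the space $\Hom_\mir(\pi\otimes\pi^\vee,\C)$ is one-dimensional, so any $\mir$-invariant pairing is automatically proportional to the canonical $G$-invariant pairing supplied by Schur's lemma for the irreducible $\sigma$. For the inductive step, the goal is analogously to show that $\Hom_\mir(\sigma\otimes\sigma^\vee,\C)$ is one-dimensional. The key structural input is the restriction formula \eqref{eq: maxrest}, which identifies the restriction of a Zelevinsky-model function to $\GL_{(m-1)n}\times\GL_n$ (up to explicit central twists) with an element of the tensor-product Whittaker model for $\Sp(\pi,m-1)\otimes\pi$. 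The inductive hypothesis for $\Sp(\pi,m-1)$, combined with the $m=1$ case for $\pi$, should pin down the $\mir$-invariant pairings to a one-dimensional space and match $\Bil_0$ with the canonical $G$-invariant pairing; the matching constant can be read off from the product formula for Rankin--Selberg integrals that already appeared inside the proof of Proposition~\ref{prop: unit}.

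The main obstacle is the lack of a Bernstein--Zelevinsky theory of derivatives in the \hmgns\ setting: as the paper itself remarks, the restriction of $\sigma$ to $Q$ is not known to be of finite length, so there is no off-the-shelf filtration of $\sigma\rest_\mir$ whose graded pieces can be analyzed as in the generic case. I would try to sidestep this by applying Bernstein's theorem on $\mir$-invariant distributions directly rather than through such a filtration — arguing, in effect, that on the open $\mir$-orbit the invariant pairing is uniquely determined by the lower-rank data provided by \eqref{eq: maxrest}, while on the closed pieces any extraneous $\mir$-invariant form is ruled out by the \AT\ condition combined with the inductive hypothesis. Making this last orbit-by-orbit analysis precise, without an explicit description of $\sigma\rest_\mir$, is the most delicate part of the argument.
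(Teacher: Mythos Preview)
Your convergence argument and the reduction to $i=0$ are exactly right, and you correctly identify that the promotion from $\mir$-invariance to $G$-invariance is where the content lies. But your inductive step aims at the wrong target. You propose to prove that $\Hom_{\mir}(\sigma\otimes\sigma^\vee,\C)$ is one-dimensional (where $\mir=\mir_{m,n}$). This statement is \emph{not} known; in fact the paper poses it immediately after this proposition as an open conjecture. So the ``orbit-by-orbit analysis'' you sketch at the end would have to establish a multiplicity-one result that is genuinely beyond current technology, for precisely the reason you yourself flag: there is no derivative theory controlling $\sigma\rest_\mir$.

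The paper's actual argument avoids this entirely. The induction hypothesis is used not to get a multiplicity statement but to get honest $\GL_{(m-1)n}$-\emph{invariance} of a partial integral. Concretely, one introduces the subgroup $Q'$ of the standard parabolic of type $((m-1)n,n)$ whose lower-right $n\times n$ block is upper unitriangular, and factors
\[
\Bil_0(\typZe,\typZe')=\int_{\mir\cap Q'\bs\mir}\Big(\int_{\mir_{m-1,n}\cap N_{m-1,n}\bs\mir_{m-1,n}}\typZe(qg)\typZe'(qg)\abs{\det q}^{1-n}\,dq\Big)\,dg.
\]
By \eqref{eq: maxrest} the inner integral is exactly $\Bil_0$ for $\Sp(\pi,m-1)$, so the induction hypothesis makes it left $(Q',\abs{\det}^{n-1})$-equivariant in $g$. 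That equivariance lets one enlarge the outer domain from $\mir\cap Q'\bs\mir$ to $Q'\bs\mir_{1,mn}$, where $\mir_{1,mn}$ is the \emph{standard} mirabolic (stabilizer of a single vector). Now $\Bil_0$ is $\mir_{1,mn}$-invariant, and the classical Bernstein theorem applies verbatim to give $G$-invariance. No multiplicity-one statement for $\mir_{m,n}$ is needed or proved.
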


\begin{proof}
By Proposition \ref{prop: unit} $\Bil_i(\cdot,\cdot)$ is well-defined and not identically zero.
To show invariance it suffices to consider $i=0$.
We use induction on $m$.
The case $m=1$ (in which $\mir$ is the standard mirabolic subgroup) is well known and follows from Bernstein's theorem \cite{MR748505}.
For the induction step, let $m>1$ and let $Q'$ be the subgroup of
the standard maximal parabolic subgroup of $G$ of type $((m-1)n,n)$
consisting of the matrices whose lower right $n\times n$ corner is upper unitriangular.
Write
\begin{multline*}
\Bil_0(\typZe,\typZe')=\int_{\mir\cap Q'\bs\mir}\int_{N_\mir\bs\mir\cap Q'}\typZe(qg)\typZe'(qg)\abs{\det q}^{1-n}\ dq\ dg\\=
\int_{\mir\cap Q'\bs\mir}\int_{\mir_{m-1,n}\cap N_{m-1,n}\bs\mir_{m-1,n}}\typZe(qg)\typZe'(qg)\abs{\det q}^{1-n}\ dq\ dg.
\end{multline*}
Here we consider $\GL_{(m-1)n}$ (and hence, $\mir_{m-1,n}$) as a subgroup of $G$.
(Note that $\delta_{\mir}=\abs{\det}^m$ while $\delta_{\mir\cap Q'}=\abs{\det}^{n+m-1}$.)
By \eqref{eq: maxrest} and the induction hypothesis, the inner integral is left $(Q',\abs{\det}^{n-1})$-equivariant in $g$.
Hence, we can replace the domain of outer integration by $Q'\bs\mir_{1,mn}$
where $\mir_{1,mn}$ is the standard mirabolic subgroup of $G$ (the stabilizer of $e_{mn}$).
(Note that $\delta_{\mir_{1,mn}}=\abs{\det}$ and $\delta_{Q'}=\abs{\det}^n$.)
It follows that $\Bil_0(\cdot,\cdot)$ is $\mir_{1,mn}$-invariant. By Bernstein's theorem, it is $G$-invariant as required.
\end{proof}

%Alternatively, can we use the argument of Jacquet-Shalika?

We immediately deduce a special case of Conjecture \ref{conj: injvtve}.
\begin{corollary} \label{cor: inj}
Conjecture \ref{conj: injvtve} holds for any $\pi\in\Irr_{\AT}\GL_n$.
In particular, it holds for any unitarizable $\pi\in\Irr_{\gen}\GL_n$.
\end{corollary}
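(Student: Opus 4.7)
The plan is to deduce injectivity of the restriction map $\typSh\mapsto\typSh\rest_\mir$ directly from the existence of the $G$-invariant pairing constructed in Proposition~\ref{prop: inv}. Concretely, fix $\pi\in\Irr_{\AT}\GL_n$ and set $\sigma=\Sp(\pi,m)$. Since $\pi^\vee$ is also \AT\ (as $\expo(\pi^\vee)=-\expo(\pi)$ and the \AT\ condition is symmetric in $\pi,\pi^\vee$), Proposition~\ref{prop: inv} applies and yields a well-defined $G$-invariant bilinear pairing
\[
\Bil_{\Sh}:\Whit^{\fpsi_{U'}}(\sigma)\times\Whit^{\fpsi_{U'}^{-1}}(\sigma^\vee)\to\C,\qquad
\Bil_{\Sh}(\typSh,\typSh')=\int_{U'\bs\mir}\typSh(g)\typSh'(g)\,dg.
\]

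Next I would observe that this pairing is non-degenerate. Indeed, by Proposition~\ref{prop: unit} (specialized to $i=(m-1)n$ via \eqref{eq: Bilii+1}) there exist vectors for which the pairing is nonzero, so $\Bil_{\Sh}$ is not identically zero. The left radical
\[
\{\typSh\in\Whit^{\fpsi_{U'}}(\sigma):\Bil_{\Sh}(\typSh,\typSh')=0\ \forall\,\typSh'\}
\]
is a $G$-invariant subspace of $\Whit^{\fpsi_{U'}}(\sigma)$, and since $\sigma$ is irreducible and the radical is proper, it must be zero. The analogous statement holds for the right radical.

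Now suppose $\typSh\in\Whit^{\fpsi_{U'}}(\sigma)$ satisfies $\typSh\rest_\mir\equiv0$. Then the integrand in the definition of $\Bil_{\Sh}(\typSh,\typSh')$ vanishes identically for every $\typSh'\in\Whit^{\fpsi_{U'}^{-1}}(\sigma^\vee)$, so $\typSh$ lies in the left radical of $\Bil_{\Sh}$. By the previous step, $\typSh=0$, which is exactly the content of Conjecture~\ref{conj: injvtve} for $\sigma$. The ``In particular'' clause is immediate from the already-recalled fact that every unitarizable $\pi\in\Irr_{\gen}$ belongs to $\Irr_{\AT}$.

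There is essentially no obstacle beyond having Proposition~\ref{prop: inv} in hand; the only subtlety is ensuring that the right-hand model carries an honest pairing (not merely a family $\Bil_{\Sh}(\cdot,\cdot,s)$ which might have required analytic continuation), but this is precisely what the \AT\ hypothesis guarantees via the convergence statement in Proposition~\ref{prop: unit} at $s=0$.
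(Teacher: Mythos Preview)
Your argument is correct and is exactly the immediate deduction the paper has in mind (the paper records no details beyond ``We immediately deduce''). One minor quibble: the parenthetical claim $\expo(\pi^\vee)=-\expo(\pi)$ is false in general (one has $\expo(\pi^\vee)=-\max_i\expo(\tau_i)$), but this aside is unnecessary anyway since Proposition~\ref{prop: inv} only assumes $\pi$ is \AT\ and takes $\pi'=\pi^\vee$.
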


\begin{remark}
By analytic continuation it is easy to prove Conjecture \ref{conj: injvtve}
for $\pi$ of the form $\pi=\tau_1\abs{\cdot}^{\lambda_1}\times\dots\times\tau_k\abs{\cdot}^{\lambda_k}$
where $\tau_i\in\Irr_{\sqr}$ are fixed and $(q^{\lambda_1},\dots,q^{\lambda_k})$ is in general position.
\end{remark}

In view of Proposition \ref{prop: inv} and Bernstein's theorem it is natural to make following related conjecture.

\begin{conjecture}
For any \hmgns\ $\sigma\in\Irr G$, every $\mir$-invariant bilinear form on $\sigma\times\sigma^\vee$ is $G$-invariant.
\end{conjecture}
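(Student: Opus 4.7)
The plan is induction on $m$, following the structural blueprint of Proposition~\ref{prop: inv}. For the base case $m=1$, the subgroup $\mir=\mir_{1,n}$ is the standard mirabolic of $\GL_n$ and $\sigma$ is generic, so the statement reduces to Bernstein's classical theorem~\cite{MR748505}, which asserts that every $\mir_{1,n}$-invariant bilinear form on $\sigma\times\sigma^\vee$ is $\GL_n$-invariant.

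For $m>1$ the strategy has two stages. First, promote the given $\mir$-invariance of $B:\sigma\times\sigma^\vee\to\C$ to invariance under the subgroup $Q'$ of the standard maximal parabolic of type $((m-1)n,n)$ whose lower-right $n\times n$ block is upper unitriangular (the same auxiliary subgroup that appears in Proposition~\ref{prop: inv}). Second, combine $\mir$- and $Q'$-invariance to reach full $G$-invariance by replicating the change-of-integration-domain argument used there to pass to $\mir_{1,mn}$-invariance, followed by a final application of Bernstein's theorem.

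The first stage---promoting $\mir$-invariance to $Q'$-invariance---would rely on the Zelevinsky model realization of $\sigma$ and $\sigma^\vee$ and on the restriction identity~\eqref{eq: maxrest}, which says that if $\typZe\in\Whit^{\fpsi_N}(\sigma)$ then, after an explicit central twist, $\typZe\rest_{\GL_{(m-1)n}\times\GL_n}$ is a Whittaker function for $\Sp(\pi,m-1)\otimes\pi$. The goal is to disintegrate $B$ as a double integral over $\mir\cap Q'\bs\mir$ whose inner factor is an $\mir_{m-1,n}$-invariant pairing on the Zelevinsky models of $\Sp(\pi,m-1)$ and $\Sp(\pi^\vee,m-1)$ (tensored with a manifestly equivariant factor in the $\GL_n$-direction). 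The induction hypothesis applied to $\Sp(\pi,m-1)$ then lifts the inner $\mir_{m-1,n}$-invariance to $\GL_{(m-1)n}$-invariance, which is exactly the extra input needed to establish $Q'$-invariance of $B$.

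The hard part---and the main obstacle to a full proof---is to justify such a disintegration for an \emph{abstract} $\mir$-invariant bilinear form. For the concrete integral $\Bil_0$ the relevant double-integral structure is manifest from construction, but in general it would require a Bernstein-style localization applied to the matrix coefficient distribution $g\mapsto B(g\cdot v,v^\vee)\rest_\mir$, coupled with a multiplicity-one analysis of the Jacquet module $J_{Q'}(\sigma)\otimes J_{Q'}(\sigma^\vee)$. The key sub-statement is that no irreducible subquotient of $J_{Q'}(\sigma)\otimes J_{Q'}(\sigma^\vee)$ other than the canonical one $(\Sp(\pi,m-1)\otimes\pi)\otimes(\Sp(\pi^\vee,m-1)\otimes\pi^\vee)$ (up to unramified central twists dictated by $\delta_{Q'}$) supports an $\mir_{m-1,n}$-invariant bilinear form. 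This multiplicity-one assertion is intimately connected with the open questions at the end of~\S\ref{sec: models} concerning the finite-length decomposition of $\sigma\rest_Q$ and the existence of a Bernstein--Zelevinsky-type derivative calculus in the \hmgns\ setting, and is where the bulk of the technical work would concentrate.
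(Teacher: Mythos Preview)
The statement you are attempting to prove is stated in the paper as a \emph{conjecture}, not a theorem; the paper offers no proof whatsoever. It is introduced immediately after Proposition~\ref{prop: inv} with the phrase ``it is natural to make the following related conjecture,'' and the only evidence given is Proposition~\ref{prop: inv} itself (which establishes the conclusion for the \emph{specific} bilinear form $\Bil_0$ under the \AT\ hypothesis) together with the classical Bernstein theorem for $m=1$. There is therefore nothing in the paper to compare your proposal against.

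As for the proposal itself: you correctly identify that the argument of Proposition~\ref{prop: inv} does not generalize to an arbitrary $\mir$-invariant form, because that argument relies on the explicit integral expression of $\Bil_0$ to produce the disintegration over $\mir\cap Q'\bs\mir$. Your candidate replacement---a Bernstein-style localization plus a multiplicity-one analysis of $J_{Q'}(\sigma)\otimes J_{Q'}(\sigma^\vee)$---is a reasonable line of attack, but as you yourself note, the required multiplicity-one input is tied to the unresolved questions about $\sigma\rest_Q$ raised at the end of \S\ref{sec: models} (finite length, existence of a derivative calculus for $m>1$). In other words, the gap you flag is genuine and is precisely why the statement remains a conjecture in the paper; your outline does not close it.
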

Perhaps even more is true.
\begin{conjecture}
For any \hmgns\ $\sigma,\sigma'\in\Irr G$, there is a unique up to scalar $\mir$-invariant bilinear form on $\sigma\times\sigma'$.
\end{conjecture}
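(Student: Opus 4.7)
The plan is to reduce the uniqueness question, via the Shalika models, to a Frobenius reciprocity computation together with the uniqueness of the Shalika functional.

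First I would realize $\sigma$ and $\sigma'$ in their Shalika models $\Whit^{\fpsi_{U'}}(\sigma)$ and $\Whit^{\fpsi_{U'}^{-1}}(\sigma')$, chosen with opposite $U'$-characters so that the pointwise product $W\cdot W'$ becomes $U'$-invariant on $G$. A given $\mir$-invariant bilinear form $B$ then translates into an $\mir$-invariant pairing of these function spaces. By Corollary \ref{cor: compind}, each model contains the compactly induced $\mir$-module $\ind_{U'}^\mir\fpsi_{U'}^{\pm 1}$, which is irreducible by Lemma \ref{lem: compind}; so at the very least, any candidate form restricts to the standard Haar pairing $\int_{U'\bs\mir}W(g)W'(g)\,dg$ on these subspaces.

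Next I would reformulate $B$ as an $\mir$-equivariant map $T_B\colon\sigma'\to\sigma^\vee$, defined by $T_B(v')(v)=B(v,v')$. Since $\sigma^\vee=\Sp(\pi^\vee,m)$ is again \hmgns, it admits a unique Shalika functional, producing a Shalika embedding $\sigma^\vee\hookrightarrow\Ind_{U'}^G\fpsi_{U'}$. Restricting to $\mir$ and invoking Conjecture \ref{conj: injvtve} (established for approximately tempered $\pi$ by Corollary \ref{cor: inj}), one obtains an injective $\mir$-equivariant map $\sigma^\vee\hookrightarrow\Ind_{U'}^\mir\fpsi_{U'}$. Composing with $T_B$ yields a $\mir$-equivariant map $\sigma'\to\Ind_{U'}^\mir\fpsi_{U'}$, and by Frobenius reciprocity (right adjunction of smooth induction),
\[
\Hom_\mir(\sigma',\Ind_{U'}^\mir\fpsi_{U'})\simeq\Hom_{U'}(\sigma',\fpsi_{U'}),
\]
a one-dimensional space by uniqueness of the Shalika functional on $\sigma'$. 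Since $B\mapsto T_B$ is manifestly injective, the composite embeds the space of $\mir$-invariant bilinear forms into a one-dimensional space, giving $\dim\{B\}\le 1$. For the existence half of the statement (if the assertion is read to include it), a non-zero $B$ should arise by meromorphic continuation of $\Bil_\Sh(W,W',s)$ to $s=0$ using the rationality in $q^s$ discussed after Proposition \ref{prop: unit}, modulo extracting a leading term should poles occur.

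The main obstacle is twofold. Technically, the functional $B(\cdot,v')$ \emph{a priori} lies only in the $\mir$-smooth dual of $\sigma$ rather than in the $G$-smooth dual $\sigma^\vee$, because the stabilizer in $\mir$ of a smooth vector is open in $\mir$ but not in $G$ (as $\mir$ itself is not open in $G$). Making the map $T_B\colon\sigma'\to\sigma^\vee$ land in the correct $G$-smooth dual requires a careful argument, plausibly bootstrapping from the $G$-irreducibility of $\sigma$ and an analogue of Bernstein's theorem for $\mir$. Substantively, the argument depends on Conjecture \ref{conj: injvtve} for $\sigma^\vee$, which in full generality remains open. A strategy bypassing both difficulties would be a Gelfand--Kazhdan-style orbit analysis of $(U',\fpsi_{U'})\times(U',\fpsi_{U'})$-biequivariant distributions on $\mir$ (or on $G$, after reducing $\mir$-invariance on $G\times G$ by the change of variables $(g_1,g_2)\mapsto g_1g_2^{-1}$), aiming to show that only one double coset supports such a distribution and thereby pinning down the pairing directly.
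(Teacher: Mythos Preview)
The statement you are attempting to prove is labeled a \emph{Conjecture} in the paper, and the paper offers no proof; indeed, the authors immediately remark that they do not even know whether the case $m=1$ is settled. So there is no proof in the paper to compare your attempt against.

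As for your proposal itself, the obstacle you flag in the penultimate paragraph is fatal, not merely technical. The functional $B(\cdot,v')$ is fixed by the stabilizer of $v'$ in $\mir$, which is open in $\mir$ but not in $G$; hence $T_B$ lands only in the $\mir$-smooth dual of $\sigma$, which is in general strictly larger than $\sigma^\vee$ (since $\mir$ is of positive codimension in $G$). Without an independent argument forcing $T_B(v')$ to be $G$-smooth, the composition with the Shalika embedding $\sigma^\vee\hookrightarrow\Ind_{U'}^G\fpsi_{U'}$ is simply undefined, and the Frobenius reciprocity step never gets off the ground. Note that forcing $G$-smoothness of $T_B(v')$ is essentially equivalent to the \emph{previous} conjecture in the paper (that $\mir$-invariance implies $G$-invariance when $\sigma'=\sigma^\vee$), so you would be assuming what is to be shown in the key case.

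Even granting that step, your argument would still hinge on Conjecture~\ref{conj: injvtve} for $\sigma^\vee$, which the paper proves only under the \AT\ hypothesis. So at best your outline would yield the conjecture conditionally on two other open statements. The Gelfand--Kazhdan-style distributional approach you sketch at the end is the more promising direction, but carrying it out would require controlling $(U',\fpsi_{U'})$-equivariant distributions on $\mir$ under the two-sided action, and nothing in the paper suggests this orbit analysis is tractable for general $m,n$.
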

(We do not know whether this is known in the case $m=1$.)

\section{Local zeta integrals} \label{sec: locint}

Throughout this section let $\pi$, $\pi'\in\Irr_{\gen}\GL_n$ and $\sigma=\Sp(\pi,m),\sigma'=\Sp(\pi',m)\in\Irr G$.

\subsection{}
Recall the $\GL_n\times\GL_n$ local Rankin-Selberg integrals studied by Jacquet--Piatetski-Shapiro--Shalika \cite{MR701565}.
They are given by
\[
Z^{\GL_n}(W,W',\Phi,s)=\int_{N_n\bs\GL_n}W(g)W'(g)\Phi(e_ng)\abs{\deg g}^s\ dg
\]
where $W\in\Whit^{\fpsi_{N_n}}(\pi)$, $W'\in\Whit^{\fpsi_{N_n}^{-1}}(\pi')$, $\Phi\in\swrz(F^n)$ and $s\in\C$.
The integral converges for $\Re s+\expo(\pi)+\expo(\pi')>0$ and admits a meromorphic continuation in $s$
to a rational function in $q^s$.
The quotient
\[
\frac{Z^{\GL_n}(W,W',\Phi,s)}{L(s,\pi\times\pi')}
\]
is a Laurent polynomial in $q^{\pm s}$ which can be made non-zero at any given $s\in\C$ by an appropriate choice of
$W$, $W'$, $\Phi$. Moreover, we have a functional equation
\[
Z^{\GL_n}(\widehat{W},\widehat{W'},\hat\Phi,1-s)=\omega_{\pi'}(-1)^{n-1}\gamma(s,\pi\times\pi',\psi)Z^{\GL_n}(W,W',\Phi,s)
\]
where $\widehat{W}\in\Whit^{\fpsi_{N_n}^{-1}}(\pi^\vee)$, $\widehat{W'}\in\Whit^{\fpsi_{N_n}}(\pi'^\vee)$ are given by
\[
\widehat{W}(g)=W(w_n\,^tg^{-1}),\ \widehat{W'}(g)=W'(w_n\,^tg^{-1})
%,\ \ w_n=\left(\begin{smallmatrix}&&&1\\&&1&\\&\iddots&&\\1&&&\end{smallmatrix}\right)
\]
and $\hat\Phi$ is the Fourier transform of $\Phi$ given by
\[
\hat\Phi(y)=\int_{F^n}\Phi(x)\psi(\sprod xy)\ dx
\]
where $\sprod{(x_1,\dots,x_n)}{(y_1,\dots,y_n)}=\sum_ix_iy_i$ denotes the standard pairing on $F^n$.

Slightly more generally,
for $W\in\Whit^{\fpsi_{N_M}}(\pi^{\otimes m})$, $W'\in\Whit^{\fpsi_{N_M}^{-1}}(\pi'^{\otimes m})$,
$\tilde\Phi\in\swrz((F^n)^m)$ and $(s_1,\dots,s_m)\in\C^m$ we write
\[
Z^M(W,W',\tilde\Phi,(s_1,\dots,s_m))=\int_{N_M\bs M}W(l)W'(l)\tilde\Phi(e_ng_1,\dots,e_ng_n)\prod_{i=1}^m\abs{\det g_i}^{s_i}\ dl
\]
where $l=\diag(g_1,\dots,g_m)\in M$. This is a linear combination of products
$\prod_{i=1}^mZ^{\GL_n}(W_i,W_i',\Phi_i,s_i)$ where $W_i\in\Whit^{\fpsi_{N_n}}(\pi)$,
$W_i'\in\Whit^{\fpsi_{N_n}^{-1}}(\pi')$ and $\Phi_i\in\swrz(F^n)$.
Thus,
\begin{multline} \label{eq: convZM}
\text{the integral defining $Z^M(W_1,W_2,\Phi,(s_1,\dots,s_m))$ is absolutely convergent}
\\\text{provided that $\Re s_i+\expo(\pi)+\expo(\pi')>0$ for all $i$.}
\end{multline}
Moreover, we have a functional equation
\begin{multline} \label{eq: FEM}
Z^M(\widehat W^M,\widehat W'^M,\widehat{\tilde\Phi}^M,(1-s_1,\dots,1-s_m))=
\omega_{\pi'}(-1)^{m(n-1)}\prod_{i=1}^m\gamma(s_i,\pi\times\pi',\psi)\\
Z^M(W,W',\tilde\Phi,(s_1,\dots,s_m))
\end{multline}
where $\widehat W^M(l)=W(\diag(\overbrace{w_n,\dots,w_n}^m)\,^tl^{-1})$ and
\[
\widehat{\tilde\Phi}^M(X_1,\dots,X_m)=\int_{(F^n)^m}\Phi(Y_1,\dots,Y_m)\psi(\sum_i\sprod{X_i}{Y_i})\ dY_1\dots\ dY_m.
\]

\subsection{}
We write an analog of the Rankin-Selberg integral for $\sigma\times\sigma'$ on the Shalika model as follows.
Recall that $\eta\in\Mat_{m,nm}(F)$ is the matrix whose $i$-th row is $e_{ni}$, $i=1,\dots,m$,
so that $\mir$ is the stabilizer of $\eta$ in $G$.
For any $\typSh\in\Whit^{\fpsi_{U'}}(\sigma)$, $\typSh'\in\Whit^{\fpsi_{U'}^{-1}}(\sigma')$, $\Phi\in\swrz(\Mat_{m,nm}(F))$
consider
\[
Z(\typSh,\typSh',\Phi,s)=\int_{U'\bs G}\typSh(g)\typSh'(g)\Phi(\eta g)\abs{\det g}^s\ dg.
\]

This expression was already considered in some form in the proof of Proposition \ref{prop: unit}.

Note that in the case $n=1$ (where $U'=1$) $Z$ reduces to the generalized Tate integral for $\GL_m$
considered by Godement--Jacquet.

In general, one can relate the integrals $Z(\typSh,\typSh',\Phi,s)$ to the usual Rankin-Selberg integrals as follows.

\begin{proposition} \label{prop: intermsofZe}
For any $\typZe\in\Whit^{\fpsi_N}(\sigma)$, $\typZe'\in\Whit^{\fpsi_N^{-1}}(\sigma')$
and $\Phi\in\swrz(\Mat_{m,nm}(F))$ we have
\begin{multline} \label{eq: ZtoZM}
Z(\trns^\psi\typZe,\trns^{\psi^{-1}}\typZe',\Phi,s)=\\
\int_{P\bs G}Z^M((\typZe)_g,(\typZe')_g,\tilde\Phi_g,(s-m+1,\dots,s))\abs{\det g}^s\ dg
\end{multline}
where $(\typZe)_g=\delta_P^{-\frac12}\delta'\typZe(\cdot g)\in\Whit^{\fpsi_{N_M}}(\pi^{\otimes m})$,
$(\typZe')_g=\delta_P^{-\frac12}\delta'\typZe'(\cdot g)\in\Whit^{\fpsi_{N_M}^{-1}}(\pi'^{\otimes m})$
and $\tilde\Phi_g$ is given by \eqref{def: Psi}.
The integral on the right-hand side, as well as the integral defining $Z(W_1,W_2,\Phi,s)$ is absolutely convergent
for $\Re s+\expo(\pi)+\expo(\pi')+1>m$.
\end{proposition}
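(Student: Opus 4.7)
The plan is to unfold $Z(\trns^\psi\typZe,\trns^{\psi^{-1}}\typZe',\Phi,s)$ in two stages. First I would pass from the Shalika realization back to the Zelevinsky one, exploiting the $G$-equivariance of the transition operators $\trns_i^\psi$; second, I would decompose the resulting integral via the Iwasawa-type factorization through the parabolic $P=P_{m,n}$ so as to expose the $M$-Rankin--Selberg integrand $Z^M$.

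\emph{Step 1.} The factor $\Phi(\eta g)$ is left-$\mir$-invariant since $\mir$ is the stabilizer of $\eta$. Decomposing $U'\bs G$ as $(\mir\bs G)\cdot(U'\bs\mir)$ in parallel with the passage from \eqref{eq: strconv} to \eqref{eq: dcmp} in the proof of Proposition~\ref{prop: unit} (with the outer measure on $\mir\bs G$ understood as the $\delta_\mir=\abs{\det}^m$-semi-invariant one), the inner integration becomes $\Bil_\Sh$ evaluated on the right-$g$-translates of $\typSh,\typSh'$. Because $\trns^\psi$ is $G$-equivariant, iterated application of \eqref{eq: Bilii+1} identifies this with $\Bil_0$ of the corresponding right-translates of $\typZe,\typZe'$, and reassembling gives
\[
Z(\trns^\psi\typZe,\trns^{\psi^{-1}}\typZe',\Phi,s)=\int_{N_\mir\bs G}\typZe(g)\typZe'(g)\Phi(\eta g)\abs{\det g}^s\,dg.
\]

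\emph{Step 2.} Decompose $\int_{N_\mir\bs G}=\int_{P\bs G}\int_{N_\mir\bs P}\delta_P^{-1}(p)\,dp\,dg$ via Iwasawa. Writing $p=ul$ with $u\in U$ and $l=\diag(g_1,\dots,g_m)\in M$ and using $N_\mir=N_M\ltimes U_\mir$, the $U$-integration only hits $\Phi$ (as $\fpsi_N$ is trivial on $U$, so $\typZe(ulg)=\typZe(lg)$) and by \eqref{eq: relPhiPsi} produces a factor $\delta'(l)\prod_i\abs{\det g_i}^{1-i}\tilde\Phi_g(e_ng_1,\dots,e_ng_m)$. Substituting $\typZe(lg)=\delta_P(l)^{1/2}\delta'(l)^{-1}(\typZe)_g(l)$, likewise for $\typZe'$, and using $\delta_P=(\delta')^{2n}$, a direct bookkeeping of exponents collapses the remaining $M$-integrand to
\[
(\typZe)_g(l)(\typZe')_g(l)\tilde\Phi_g(e_ng_1,\dots,e_ng_m)\prod_{i=1}^m\abs{\det g_i}^{s-m+i},
\]
which is exactly the integrand of $Z^M((\typZe)_g,(\typZe')_g,\tilde\Phi_g,(s-m+1,\dots,s))$. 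Combined with the outer $\abs{\det g}^s$ this yields the claimed identity.

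For convergence, since $P\bs G$ is compact the outer integration is harmless, and by \eqref{eq: convZM} the Rankin--Selberg integrals $Z^{\GL_n}$ composing $Z^M$ at the points $s-m+i$ converge provided $\Re(s-m+i)+\expo(\pi)+\expo(\pi')>0$ for every $i=1,\dots,m$; the tightest constraint, at $i=1$, is precisely $\Re s+\expo(\pi)+\expo(\pi')+1>m$. Absolute convergence of $Z(\typSh,\typSh',\Phi,s)$ under the same hypothesis follows by running the same manipulations with absolute values throughout. The main technical obstacle is not conceptual but computational: one must keep careful track of the modular characters $\delta_P$, $\delta'$ and of the powers of $\abs{\det g_i}$ in Step~2 so that they combine to produce the exponents $s-m+i$ (rather than a shifted variant); this is routine and parallels the bookkeeping already performed in the proof of Proposition~\ref{prop: unit}.
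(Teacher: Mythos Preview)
Your proposal is correct and follows essentially the same route as the paper: first pass from the Shalika to the Zelevinsky model via \eqref{eq: Bilii+1} to obtain $\int_{N_\mir\bs G}\typZe(g)\typZe'(g)\Phi(\eta g)\abs{\det g}^s\,dg$, then decompose through $P$ and invoke \eqref{eq: relPhiPsi} to extract $Z^M$. One small slip in your Step~2: the $U_\mir\bs U$-integration via \eqref{eq: relPhiPsi} yields $\abs{\det l}^{-(m-1)/2}\delta'(l)\tilde\Phi_g(\dots)=\prod_i\abs{\det g_i}^{1-i}\tilde\Phi_g(\dots)$, not $\delta'(l)\prod_i\abs{\det g_i}^{1-i}\tilde\Phi_g(\dots)$; once this is corrected the exponents line up exactly as you claim.
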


\begin{proof}
Write $Z(\trns^\psi\typZe,\trns^{\psi^{-1}}\typZe',\Phi,s)$ as
\[
\int_{\mir\bs G}\int_{U'\bs\mir}
\trns^\psi\typZe(lg)\trns^{\psi^{-1}}\typZe'(lg)\abs{\det l}^{s-m}\ dl\ \Phi(\eta g)\abs{\det g}^s\ dg.
\]
By Proposition \ref{prop: unit} %(cf. Remark \ref{rem: gens})
we get
\begin{multline*}
\int_{\mir\bs G}\int_{N_\mir\bs\mir}
\typZe(lg)\typZe'(lg)\abs{\det l}^{s-m}\ dl\ \Phi(\eta g)\abs{\det g}^s\ dg\\=
\int_{N_\mir\bs G}\typZe(g)\typZe'(g)\Phi(\eta g)\ dg\abs{\det g}^s\ dg.
\end{multline*}
We write it as
\[
\int_{P\bs G}\int_{N_M\bs M}\typZe(lg)\typZe'(lg)\int_{U_\mir\bs U}\Phi(\eta ulg)\ du
\ \abs{\det l}^s\delta_P(l)^{-1}\ dl\ \abs{\det g}^s\ dg.
\]
The required identity now follows from \eqref{eq: relPhiPsi}.
For convergence, as in the proof of Proposition \ref{prop: unit},
we may assume that $\Phi\ge0$, $s\in\R$, $\pi'=\bar\pi$ and $W_2=\overline{W_1}$,
%(using the inequality $\abs{xy}\le\frac12(\abs{x}^2+\abs{y}^2)$).
so that all the integrands considered above are non-negative. Therefore, the manipulations are justified
for $s+2\expo(\pi)+1>m$ by \eqref{eq: convZM}.
\end{proof}

\begin{corollary} \label{cor: unram}
For any $\typSh\in\Whit^{\fpsi_{U'}}(\sigma)$, $\typSh'\in\Whit^{\fpsi_{U'}^{-1}}(\sigma')$
and $\Phi\in\swrz(\Mat_{m,nm}(F))$ the function
\[
\big(\prod_{i=0}^{m-1}L(s-i,\pi\times\pi')\big)^{-1}Z(\typSh,\typSh',\Phi,s)
\]
is a Laurent polynomial in $q^{\pm s}$, hence entire.

Moreover, if $\sigma,\sigma'$ are unramified, $\typSh\in\Whit^{\fpsi_{U'}}(\sigma)$, $\typSh'\in\Whit^{\fpsi_{U'}^{-1}}(\sigma)$
are the unramified vectors such that $\typSh(e)=\typSh(e)=1$, $\Phi$ is the characteristic function of $\Mat_{m,nm}(\OOO)$ and
$\psi$ has conductor $\OOO$ then
\[
Z(\typSh,\typSh',\Phi,s)=\prod_{i=0}^{m-1}L(s-i,\pi\times\pi')=L(s-\frac{m-1}2,\pi\times\sigma')=L(s-\frac{m-1}2,\sigma\times\pi').
\]
\end{corollary}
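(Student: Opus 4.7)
The plan is to apply Proposition~\ref{prop: intermsofZe}. Since $\trns^\psi$ is an isomorphism, write $\typSh = \trns^\psi \typZe$ and $\typSh' = \trns^{\psi^{-1}} \typZe'$ for suitable $\typZe,\typZe'$, so that both assertions reduce to an analysis of the right-hand side of \eqref{eq: ZtoZM}.

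For the rationality statement, the crucial point is that $P\bs G \simeq (P\cap K)\bs K$ is compact. By \eqref{eq: restW} and the definition of $Z^M$, for each $g$ in a set of representatives in $K$ the quantity $Z^M((\typZe)_g,(\typZe')_g,\tilde\Phi_g,(s-m+1,\dots,s))$ is a finite linear combination of products $\prod_{i=1}^m Z^{\GL_n}(W_i,W_i',\Phi_i,s-m+i)$, with data depending locally constantly on $g$. The classical Jacquet--Piatetski-Shapiro--Shalika theorem implies that each factor, divided by $L(s-m+i,\pi\times\pi')$, is a Laurent polynomial in $q^{\pm s}$. Since $\prod_{i=1}^m L(s-m+i,\pi\times\pi') = \prod_{i=0}^{m-1} L(s-i,\pi\times\pi')$ and the outer integral reduces to a finite sum, the first claim follows.

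For the unramified computation, Remark~\ref{rem: unrmtrns} guarantees that $\typSh=\trns^\psi\typZe$ is the normalized unramified vector if $\typZe$ is (and similarly for $\typSh'$). The right $K$-invariance of $\typZe,\typZe'$, the invariance of $\Phi = \one_{\Mat_{m,nm}(\OOO)}$ under right multiplication by $K$, and the fact that $\abs{\det k}=1$ on $K$ together imply that the integrand in \eqref{eq: ZtoZM} is right $K$-invariant in $g$. With the Tamagawa normalizations of \S\ref{sec: unit} (under which $P\bs G$ has total volume $1$) the outer integral collapses to its value at $g=e$. There $(\typZe)_e$ and $(\typZe')_e$ are the normalized unramified Whittaker functions on $M$ of $\pi^{\otimes m}$ and $\pi'^{\otimes m}$, while a direct computation from \eqref{def: Psi} gives
\[
\tilde\Phi_e(v_1,\dots,v_m) = \prod_{i=1}^m \one_{\OOO^n}(v_i),
\]
the $\binom m2$ strictly upper triangular entries of $X$ each contributing volume $1$ since $\psi$ has conductor $\OOO$. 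Hence $Z^M$ factorizes as $\prod_{i=1}^m Z^{\GL_n}(W_\pi,W_{\pi'},\one_{\OOO^n},s-m+i)$, and the classical unramified identity of Shintani and Casselman--Shalika, $Z^{\GL_n}(W_\pi,W_{\pi'},\one_{\OOO^n},s)=L(s,\pi\times\pi')$, yields the product $\prod_{i=0}^{m-1} L(s-i,\pi\times\pi')$.

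The identification with $L(s-(m-1)/2,\pi\times\sigma') = L(s-(m-1)/2,\sigma\times\pi')$ is a direct consequence of Remark~\ref{rem: unr}: the Satake parameters of $\Sp(\pi',m)$ are the twists by $q^{j-(m-1)/2}$, $j=0,\dots,m-1$, of those of $\pi'$, so $L(s,\pi\times\sigma') = \prod_{j=0}^{m-1} L(s+(m-1)/2-j,\pi\times\pi')$, and the substitution $s\mapsto s-(m-1)/2$ matches. The main subtlety to keep track of is the convergence and interchange of integrals in Proposition~\ref{prop: intermsofZe}, which are valid only for $\Re s + \expo(\pi) + \expo(\pi') + 1 > m$; the unramified identity then extends to all $s$ by meromorphic continuation, justified by the Laurent polynomial property established in the previous step.
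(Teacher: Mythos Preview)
Your proof is correct and follows exactly the approach the paper indicates: reduce via Proposition~\ref{prop: intermsofZe} to the classical Rankin--Selberg theory and invoke Remark~\ref{rem: unrmtrns} for the unramified normalization. The paper's own argument is a single sentence to this effect, so you have simply spelled out the details (compactness of $P\bs G$, the explicit form of $\tilde\Phi_e$, and the Satake-parameter computation for the last $L$-function identities) that the paper leaves to the reader.
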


Indeed, this follows from the analogous statements for the usual Rankin--Selberg integrals together with
Remark \ref{rem: unrmtrns}.

\begin{remark}
In general, without the assumption that $\sigma$ and $\sigma'$ are unramified, the equality
\[
L(s-\frac{m-1}2,\pi\times\sigma')=L(s-\frac{m-1}2,\sigma\times\pi')
\]
does not always hold.
\end{remark}

For any $\typSh\in\Whit^{\fpsi_{U'}}(\sigma)$ let $\widehat\typSh\in\Whit^{\fpsi_{U'}^{-1}}(\sigma^\vee)$
be given by $\widehat\typSh(g)=\typSh(w_{nm}\,^tg^{-1})$.

\begin{corollary}
For any $\pi,\pi'\in\Irr_{\gen}\GL_n$ we have a local functional equation
\begin{equation} \label{eq: FE}
Z(\widehat{\typSh},\widehat{\typSh'},\hat\Phi,m-s)=
\omega_{\pi'}(-1)^{(n-1)m}\left(\prod_{i=0}^{m-1}\gamma(s-i,\pi\times\pi',\psi)\right)Z(\typSh,\typSh',\Phi,s)
\end{equation}
where $\widehat\Phi$ is the Fourier transform
\[
\widehat\Phi(X)=\int_{\Mat_{m,nm}(F)}\Phi(Y)\psi(\tr \,^tY w_m X)\ dY.
\]
\end{corollary}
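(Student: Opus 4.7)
The plan is to reduce the functional equation \eqref{eq: FE} to the known one \eqref{eq: FEM} for $Z^M$ via the identity \eqref{eq: ZtoZM}. The first preparation is to write $\typSh = \trns^\psi \typZe$ and $\typSh' = \trns^{\psi^{-1}} \typZe'$, and to establish the compatibility
\[
\widehat{\trns^\psi \typZe} = \trns^{\psi^{-1}} \widehat\typZe, \qquad \widehat\typZe(g) := \typZe(w_{nm}\,^tg^{-1}) \in \Whit^{\fpsi_N^{-1}}(\sigma^\vee).
\]
This follows from the integral expression for $\trns$ in Lemma~\ref{L: support} once one checks that conjugation by $w_{nm}\,^t(\cdot)^{-1}$ preserves $U' \cap \bar U$ and carries $\fpsi_N$ to $\fpsi_N^{-1}$; the character computation $u_{i,i+1} \mapsto -u_{nm-i,\,nm+1-i}$ preserves the condition $n \nmid i$ and globally flips the sign of the character.

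Apply Proposition~\ref{prop: intermsofZe} to both sides of \eqref{eq: FE}. For the right-hand side,
\[
Z(\typSh, \typSh', \Phi, s) = \int_{P\bs G} Z^M((\typZe)_g, (\typZe')_g, \tilde\Phi_g, (s-m+1, \dots, s))\,\abs{\det g}^s\, dg,
\]
and for each fixed $g$, apply \eqref{eq: FEM} with $(s_1, \dots, s_m) = (s-m+1, \dots, s)$. The resulting constant is precisely $\omega_{\pi'}(-1)^{m(n-1)}\prod_{i=1}^m \gamma(s-m+i, \pi\times\pi',\psi) = \omega_{\pi'}(-1)^{(n-1)m}\prod_{i=0}^{m-1}\gamma(s-i, \pi\times\pi', \psi)$, the scalar on the right-hand side of \eqref{eq: FE}. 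For the left-hand side, Proposition~\ref{prop: intermsofZe} and the compatibility above give
\[
Z(\widehat\typSh, \widehat\typSh', \hat\Phi, m-s) = \int_{P\bs G} Z^M((\widehat\typZe)_g, (\widehat\typZe')_g, (\hat\Phi)^\sim_g, (1-s, \dots, m-s))\,\abs{\det g}^{m-s}\, dg.
\]

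To match the two resulting expressions, write $w_{nm} = c_{m,n}\diag(w_n, \dots, w_n)$, where $c_{m,n}$ is the block-reversal Weyl element of $G$. A direct calculation using the definition $\widehat W^M(l) = W(\diag(w_n, \dots, w_n)\,^tl^{-1})$, together with the cancellation of the characters $\delta_P^{-1/2}\delta'(l)$ and $\delta_P^{1/2}\delta'^{-1}(\widehat{\tau(l)}^M)$ (with $\tau$ the block-reversal automorphism of $M$), yields
\[
(\widehat\typZe)_g \;=\; \widehat{(\typZe)_{c_{m,n}\,^tg^{-1}}}^M \circ \tau,
\]
and analogously for $\typZe'$. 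A parallel but more delicate identification matches $(\hat\Phi)^\sim_g$ with $\widehat{\tilde\Phi}^M_{c_{m,n}\,^tg^{-1}}$ up to a factor $\abs{\det g}^m$ arising from comparing the ``fibered'' vs.\ ``full'' Fourier transform in \eqref{def: Psi}. The change of variables $g \mapsto c_{m,n}\,^tg^{-1}$ on $P\bs G$ is well-defined since $c_{m,n}\,^tPc_{m,n}^{-1} = P$, is measure-preserving by unimodularity of $G$, and transforms $\abs{\det g}^{m-s}$ into $\abs{\det g}^{s-m}$; combined with the invariance of $Z^M$ under simultaneous reversal $(s_1, \dots, s_m) \leftrightarrow (s_m, \dots, s_1)$ of parameters and block-order, this substitution converts the second integral above into the first.

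The principal obstacle is the identification $(\hat\Phi)^\sim_g \;=\; \abs{\det g}^m\,\widehat{\tilde\Phi}^M_{c_{m,n}\,^tg^{-1}}$ (or the corresponding precise relation), whose exact form provides the compensating $\abs{\det g}^m$ that converts $\abs{\det g}^{s-m}$ into $\abs{\det g}^s$; this is a careful unfolding of the two Fourier transforms, relying on the fact that the pairing $\langle X, Y\rangle = \tr(\,^tY w_m X)$ in the definition of $\hat\Phi$ is precisely the one compatible with the block structure imposed by $w_{nm} = c_{m,n}\diag(w_n, \dots, w_n)$. All manipulations are initially valid in the absolute convergence range $\Re(s) + \expo(\pi) + \expo(\pi') + 1 > m$ furnished by Proposition~\ref{prop: intermsofZe}, and \eqref{eq: FE} then extends to all $s$ by the rationality in $q^s$ established in Corollary~\ref{cor: unram}.
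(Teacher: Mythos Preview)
Your approach is essentially the same as the paper's: reduce to \eqref{eq: FEM} via Proposition~\ref{prop: intermsofZe}, use the compatibility $\widehat{\trns\typZe}=\trns(\widehat\typZe)$, establish the relations $(\widehat\typZe)_g(l)=\widehat{(\typZe)_{g'}}^M(\ww l\ww^{-1})$ and $\widetilde{(\widehat\Phi)}_g(v_1,\dots,v_m)=\abs{\det g}^{-m}\widehat{\tilde\Phi_{g'}}^M(v_m,\dots,v_1)$ with $g'=\ww\,^tg^{-1}$ (your $c_{m,n}$ is the paper's $\ww$), and then perform the change of variable $g\mapsto g'$. The only caveat is that the precise Fourier-transform identity carries the factor $\abs{\det g}^{-m}$ in the \emph{original} variable $g$ (equivalently $\abs{\det g'}^{m}$ in the new one), so your bookkeeping of the power of $\abs{\det g}$ should be read accordingly; with that understood, your argument is correct.
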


\begin{proof}
For any $\typZe\in\Whit^{\fpsi_N}(\sigma)$ define
$\widehat\typZe\in\Whit^{\fpsi_N^{-1}}(\sigma^\vee)$ by $\widehat\typZe(g)=\typZe(w_{nm}\,^t{g}^{-1})$.
Then $\widehat{\trns\typZe}=\trns(\widehat\typZe)$.
Note $w_{mn}=\diag(\overbrace{w_n,\dots,w_n}^m)\ww$ where $\ww$ is as in Remark \ref{rem: w0}; write $g'=\ww\,^tg^{-1}$, $g\in G$.
Then for any $g\in G$ we have
\[
(\widehat\typZe)_g(l)=\widehat{(\typZe)_{g'}}^M(\ww l\ww^{-1}),\ \ l\in M
\]
and by Fourier inversion
\[
\widetilde{(\widehat\Phi)}_g(v_1,\dots,v_m)=\abs{\det g}^{-m}\widehat{\tilde\Phi_{g'}}^M(v_m,\dots,v_1),\ \ v_1,\dots,v_m\in F^n.
\]
The corollary therefore follows from Proposition \ref{prop: intermsofZe} and the functional equation \eqref{eq: FEM}
using the change of variable $g\mapsto g'$ in the integral on the right-hand side of \eqref{eq: ZtoZM}.
\end{proof}

\begin{corollary} \label{cor: absconv}
Suppose that $\pi$ is \AT\ and let $\pi'=\pi^\vee$.
Then for any $\typSh\in\Whit^{\fpsi_{U'}}(\sigma)$, $\typSh'\in\Whit^{\fpsi_{U'}^{-1}}(\sigma')$ we have
\[
Z(\typSh,\typSh',\Phi,m)=c\Bil_{\Sh}(\typSh,\typSh')\hat\Phi(0)
\]
where both sides are well-defined. Here $c$ is a constant which depends only on $F$
\end{corollary}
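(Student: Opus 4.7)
The plan is to rewrite $Z(\typSh,\typSh',\Phi,m)$ as an iterated integral along the tower $U'\subset\mir\subset G$, use the $G$-invariance of $\Bil_{\Sh}$ guaranteed by Proposition~\ref{prop: inv} to pull the pairing out of the inner integral, and then identify the residual integral over $\mir\bs G$ with a Lebesgue integral on $\Mat_{m,nm}(F)$ via the orbit map $g\mapsto\eta g$.

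First I would verify that both sides are absolutely convergent under the hypotheses $\pi'=\pi^\vee$ and $\pi$ \AT. The form $\Bil_{\Sh}(\typSh,\typSh')=\Bil_{\Sh}(\typSh,\typSh',0)$ converges by Proposition~\ref{prop: unit} since $\expo(\pi)+\expo(\pi^\vee)+1>0$, and $Z(\typSh,\typSh',\Phi,m)$ converges by Proposition~\ref{prop: intermsofZe} since the condition $\Re s+\expo(\pi)+\expo(\pi')+1>m$ at $s=m$ reduces to the same inequality. The standard majorization used in the proofs of those propositions (reduce to $\Phi\ge0$, $s\in\R$ real, and $\typSh'=\overline{\typSh}$) will moreover give absolute convergence of the double integral, so that Fubini applies.

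Next, with the measure conventions of \eqref{eq: dcmp}, I rewrite
\[
Z(\typSh,\typSh',\Phi,m)=\int_{\mir\bs G}\Big(\int_{U'\bs\mir}\typSh(lg)\typSh'(lg)\,dl\Big)\Phi(\eta g)\abs{\det g}^m\,dg.
\]
For each fixed $g$, the inner integral is $\Bil_{\Sh}(\sigma(g)\typSh,\sigma^\vee(g)\typSh')$, which by the $G$-invariance of $\Bil_{\Sh}$ (Proposition~\ref{prop: inv}) equals the $g$-independent quantity $\Bil_{\Sh}(\typSh,\typSh')$. Pulling this constant out of the outer integral leaves
\[
Z(\typSh,\typSh',\Phi,m)=\Bil_{\Sh}(\typSh,\typSh')\int_{\mir\bs G}\Phi(\eta g)\abs{\det g}^m\,dg.
\]

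For the remaining integral, since $\mir$ is the stabilizer of $\eta$ under the right action of $G$ on $\Mat_{m,nm}(F)$, the orbit map $g\mapsto\eta g$ realizes $\mir\bs G$ as the dense open subset of rank-$m$ matrices, whose complement has Lebesgue measure zero. The Jacobian of right translation $Y\mapsto Yg_0$ on $\Mat_{m,nm}(F)$ is $\abs{\det g_0}^m$, which is exactly the transformation factor of the quasi-invariant measure $\abs{\det g}^m\,dg$ on $\mir\bs G$ (consistent with $\delta_\mir=\abs{\det}^m$). Hence the pushforward of $\abs{\det g}^m\,dg$ is a constant $c$ multiple of Lebesgue measure on $\Mat_{m,nm}(F)$, with $c$ determined by the fixed normalizations of Haar measures and therefore depending only on $F$. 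Consequently
\[
\int_{\mir\bs G}\Phi(\eta g)\abs{\det g}^m\,dg=c\int_{\Mat_{m,nm}(F)}\Phi(Y)\,dY=c\,\hat\Phi(0),
\]
which yields the claimed identity. The essential input is the $G$-invariance of $\Bil_{\Sh}$ supplied by Proposition~\ref{prop: inv}; the main (minor) technical care is ensuring the measure normalizations match so that $c$ is genuinely independent of $\pi$ and $\typSh,\typSh',\Phi$.
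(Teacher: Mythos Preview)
Your proof is correct and follows essentially the same approach as the paper: decompose $Z(\typSh,\typSh',\Phi,m)$ as an iterated integral over $U'\bs\mir$ and $\mir\bs G$ using $\delta_{\mir}=\abs{\det}^m$, invoke the $G$-invariance of $\Bil_{\Sh}$ from Proposition~\ref{prop: inv} to extract the pairing, and identify the residual integral $\int_{\mir\bs G}\Phi(\eta g)\abs{\det g}^m\,dg$ with $c\,\hat\Phi(0)$. You supply more detail than the paper on convergence and on the orbit-map identification of measures, but the argument is the same.
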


\begin{proof}
Since the modulus function of $\mir$ is $\abs{\det}^m$, we have
\begin{multline*}
Z(\typSh,\typSh',\Phi,m)=\int_{U'\bs G}\typSh(g)\typSh'(g)\Phi(\eta g)\abs{\det g}^m\ dg=
\\\int_{\mir\bs G}\int_{U'\bs\mir}\typSh(pg)\typSh'(pg)\ dp\ \Phi(\eta g)\abs{\det g}^m\ dg.
\end{multline*}
For $\pi'=\pi^\vee$, by Proposition \ref{prop: inv} we get
\[
\Bil_{\Sh}(\typSh,\typSh')\int_{\mir\bs G}\ \Phi(\eta g)\abs{\det g}^m\ dg=c\hat\Phi(0)\Bil_{\Sh}(\typSh,\typSh')
\]
as required.
\end{proof}

\begin{corollary} \label{cor: ors=0}
Suppose that $\pi$ is \AT\ and let $\pi'=\pi^\vee$.
Then the highest possible order of pole of
$Z(\typSh,\typSh',\Phi,s)$ at $s=0$ is the order $\ord$ of the zero of the product of $\gamma$-factors
on the right-hand side of \eqref{eq: FE} at $s=0$.
%More precisely, if $\pi=\tau_1\times\dots\times\tau_k$ with
\end{corollary}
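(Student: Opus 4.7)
The plan is to read off the assertion directly from the functional equation \eqref{eq: FE} once one knows that the ``dual'' side of that equation is finite at the point in question.

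First I would unwind the dualities. Since $\pi$ is \AT\ and $\expo(\pi^\vee)=-\expo(\pi)$, the representation $\pi^\vee$ is also \AT; moreover $(\pi^\vee)^\vee=\pi$, so the hypothesis of Corollary~\ref{cor: absconv} is symmetric in the pair $(\pi,\pi^\vee)$. Next, the operation $\typSh\mapsto\widehat\typSh$ interchanges $\Whit^{\fpsi_{U'}}(\sigma)$ with $\Whit^{\fpsi_{U'}^{-1}}(\sigma^\vee)$ (and similarly on the primed side); thus $\widehat{\typSh}$ and $\widehat{\typSh'}$ are a legitimate pair of Shalika vectors for the pair $(\sigma^\vee,\sigma)=(\Sp(\pi^\vee,m),\Sp(\pi,m))$. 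Consequently Corollary~\ref{cor: absconv}, applied with $(\pi,\pi^\vee)$ replaced by $(\pi^\vee,\pi)$, gives
\[
Z(\widehat{\typSh},\widehat{\typSh'},\hat\Phi,m)=c\,\Bil_{\Sh}(\widehat{\typSh'},\widehat{\typSh})\,\hat{\hat\Phi}(0),
\]
which is a finite scalar.

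Now I would substitute $s=0$ into the functional equation \eqref{eq: FE}. The left-hand side becomes $Z(\widehat{\typSh},\widehat{\typSh'},\hat\Phi,m)$, which we have just seen is finite. Hence the right-hand side
\[
\omega_{\pi'}(-1)^{(n-1)m}\Bigl(\prod_{i=0}^{m-1}\gamma(s-i,\pi\times\pi',\psi)\Bigr)Z(\typSh,\typSh',\Phi,s)
\]
is regular at $s=0$. By definition the $\gamma$-factor product has a zero of order exactly $\ord$ at $s=0$, so $Z(\typSh,\typSh',\Phi,s)$ can have at most a pole of order $\ord$ at $s=0$, as claimed. (Its meromorphicity in $s$, required to make this inequality meaningful, is supplied by Corollary~\ref{cor: unram}.)

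There is essentially no obstacle beyond a careful bookkeeping of duals: the only nontrivial input is Corollary~\ref{cor: absconv}, and the only thing to verify is that its hypothesis is preserved under the map $(\typSh,\typSh',\Phi)\mapsto(\widehat\typSh,\widehat{\typSh'},\hat\Phi)$, which is immediate from $\expo(\pi^\vee)=-\expo(\pi)$ and $(\pi^\vee)^\vee=\pi$.
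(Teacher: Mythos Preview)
Your argument is correct and is exactly the one the paper has in mind: the corollary is stated immediately after Corollary~\ref{cor: absconv} with no separate proof precisely because it is meant to be read off from the functional equation \eqref{eq: FE} together with the finiteness of $Z(\widehat\typSh,\widehat{\typSh'},\hat\Phi,m)$ supplied by Corollary~\ref{cor: absconv}.

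One small remark: as written you only establish the upper bound $\le\ord$. If the statement is read as an equality (``the highest possible order \emph{is} $\ord$''), you also need that $Z(\widehat\typSh,\widehat{\typSh'},\hat\Phi,m)$ can be made nonzero for some choice of data. This is immediate from Corollary~\ref{cor: absconv}: by Proposition~\ref{prop: inv} the pairing $\Bil_{\Sh}$ is a nonzero $G$-invariant form on the irreducible pair $\sigma^\vee\times\sigma$, hence nondegenerate, and $(\hat\Phi)\widehat{\ }(0)$ can certainly be chosen nonzero. With that one sentence added your proof is complete.
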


\begin{example}
If $\pi\in\Irr_{\sqr}\GL_n$ corresponds to a segment of length $k$ and $\pi'=\pi^\vee$ then $\ord=\min(m,k)$.
Indeed, In this case $\big(\prod_{j=1}^k\frac{1-q^{f(s-j)}}{1-q^{-f(s+j-1)}}\big)\gamma(s,\pi\times\pi^\vee,\psi)$
is entire for a suitable integer $f>0$ depending on $\pi$.
\end{example}

\begin{corollary} \label{cor: polem-1}
Suppose that $\omega_\pi$ is unitary and let $\pi'=\overline{\pi}$.
Then for suitable $\typSh\in\Whit^{\fpsi_{U'}}(\sigma)$, $\typSh'\in\Whit^{\fpsi_{U'}^{-1}}(\sigma^\vee)$
and $\Phi\in\swrz(\Mat_{m,nm}(F))$, $Z(\typSh,\typSh',\Phi,s)$ has at least one pole for $\Re s\ge m-1$.
\end{corollary}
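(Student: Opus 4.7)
The plan is to apply Corollary~\ref{cor: unram} in the contrapositive: locate a pole of the product $\prod_{i=0}^{m-1}L(s-i,\pi\times\bar\pi)$ with real part at least $m-1$, and check that for some choice of data this pole is not cancelled by the Laurent-polynomial numerator of $Z$.

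First I would pinpoint the pole. Write $\pi=\tau_1\times\dots\times\tau_k$ with $\tau_j\in\Irr_{\esqr}$, and decompose $\tau_j=\tau_j^{0}\abs{\cdot}^{\expo(\tau_j)}$ with $\tau_j^{0}\in\Irr_{\sqr}$ unitary. Using $(\tau_j^{0})^\vee\simeq\overline{\tau_j^{0}}$ one finds $\overline{\tau_j}^\vee\simeq\tau_j\abs{\cdot}^{-2\expo(\tau_j)}$, so $L(s,\tau_j\times\overline{\tau_j})$ has a simple pole at $s=-2\expo(\tau_j)$. The unitarity of $\omega_\pi$ forces $\sum_j\deg(\tau_j)\,\expo(\tau_j)=0$, so $\expo(\pi)=\min_j\expo(\tau_j)\le 0$. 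Setting $s_0:=-2\expo(\pi)\ge 0$, the $L$-factor $L(s,\pi\times\bar\pi)=\prod_{i,j}L(s,\tau_i\times\overline{\tau_j})$ has a pole at $s=s_0$, hence $\prod_{i=0}^{m-1}L(s-i,\pi\times\bar\pi)$ has a pole at $s_\ast:=s_0+m-1\ge m-1$.

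Next I would argue non-cancellation via positivity. Take $\pi'=\bar\pi$, $\typSh'=\overline{\typSh}$ and $\Phi\ge 0$, so that the integrand of $Z(\typSh,\overline{\typSh},\Phi,s)$ is non-negative for real $s$. Suppose, for contradiction, that for every such input $Z$ is holomorphic at $s_\ast$. Since the integrand depends continuously on $s$, Fatou's lemma as $s\searrow s_\ast$ through real values forces absolute convergence of the defining integral at $s=s_\ast$. Writing $\typSh=\trns^\psi\typZe$ and combining Proposition~\ref{prop: intermsofZe} with Tonelli (legitimate because the integrand is non-negative), this convergence propagates to the inner $Z^M$-integral at the parameter $(s_0,s_0+1,\dots,s_0+m-1)$ for almost every $g\in P\bs G$, and in particular to the first-coordinate Rankin--Selberg factor $Z^{\GL_n}(W,\overline{W},\Phi_1,s_0)$ for the data $(W,\Phi_1)$ read off from $((\typZe)_g,\tilde\Phi_g)$. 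But the Jacquet--Piatetski-Shapiro--Shalika theory produces data for which this Rankin--Selberg integral diverges at $s=s_0$ --- the abscissa of convergence for $\pi\times\bar\pi$ --- and the divergence is stable under small perturbations, yielding the desired contradiction.

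The main obstacle is the realization step just invoked: one must arrange, by some global choice of $(\typZe,\Phi)$, that the localized inner data $((\typZe)_g,\tilde\Phi_g)$ produces a pole-creating Rankin--Selberg configuration on a positive-measure set of $g\in P\bs G$. I would localize $\Phi$ near $\eta$ so that the outer integration concentrates near a chosen $g_0$, and use the surjectivity of the restriction map \eqref{eq: restW} at $g_0$ (coming from the Whittaker uniqueness for $\pi^{\otimes m}$ via Corollary~\ref{C: restomir2}) together with the surjectivity of $\Phi\mapsto\tilde\Phi_{g_0}$ from $\swrz(\Mat_{m,nm}(F))$ onto $\swrz((F^n)^m)$, to freely prescribe the inner Whittaker and Schwartz data at $g_0$. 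Continuous dependence then propagates the divergence to a neighborhood of $g_0$, producing the required positive-measure set.
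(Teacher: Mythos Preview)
Your overall strategy---positivity plus divergence of the unfolded integral---is the same as the paper's, but you have made it harder than necessary and introduced a gap in the process.

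The paper does not first locate a pole of $\prod_{i=0}^{m-1}L(s-i,\pi\times\bar\pi)$; it simply targets $s=m-1$. With $\typSh'=\overline{\typSh}$ and $\Phi\ge0$, the right-hand side of \eqref{eq: ZtoZM} is a Laurent series in $q^{-s}$ with non-negative coefficients, so by Pringsheim/Landau its abscissa of convergence is itself a singularity. If $Z$ were holomorphic throughout $\Re s\ge m-1$, the integral would converge at $s=m-1$. But at $s=m-1$ the first coordinate in $Z^M$ is $0$, and separating out the center of the first $\GL_n$-factor produces (using unitarity of $\omega_\pi$) the Tate-type integral $\int_{F^*}\tilde\Phi_g(\lambda e_n,e_n,\dots,e_n)\abs{\lambda}^{n(s-m+1)}\,d^*\lambda$, which diverges at $s=m-1$ as soon as $\tilde\Phi_g(0,e_n,\dots,e_n)\ne0$. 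This is an elementary divergence requiring no control over the Whittaker data beyond non-vanishing, and no surjectivity statement.

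By contrast, your target $s_\ast=-2\expo(\pi)+m-1$ can be strictly larger than $m-1$ when $\pi$ is not tempered, and the Tate-integral trick no longer applies there: the central integral becomes $\int_{F^*}\abs{\lambda}^{ns_0}\tilde\Phi_g(\lambda e_n,\dots)\,d^*\lambda$ with $s_0>0$, which converges. You then need to \emph{force} the inner Rankin--Selberg integral to realize the $L$-function pole at $s_0$, and for this you invoke surjectivity of $\typZe\mapsto(\typZe)_{g_0}$ onto $\Whit^{\fpsi_{N_M}}(\pi^{\otimes m})$. That is the gap: Corollary~\ref{C: restomir2} concerns restriction to $\mir$, not to $M$, and neither it nor \eqref{eq: restW} asserts surjectivity of restriction to $M$. (One could try to argue this via irreducibility of $\pi^{\otimes m}$ and a Jacquet-module computation, but nothing in the paper supplies it, and it is unnecessary once you aim at $s=m-1$.)

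In short: drop the preliminary $L$-function pole-hunting, aim directly at $s=m-1$, and replace the realization step by the elementary divergence of the inner central integral. Your Fatou argument is fine, though the cleaner phrasing is via Landau's theorem on power series with non-negative coefficients.
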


Indeed, taking $\typSh'=\overline{\typSh}$ and $\Phi\ge0$, the right-hand side of \eqref{eq: ZtoZM}
is a power series in $q^{-s}$ with non-negative coefficients $a_k$ which vanish for $k\ll0$.
Assume on the contrary that $Z(\typSh,\typSh',\Phi,s)$ is holomorphic throughout $\Re s\ge m-1$.
Then the power series would converge at $s=m-1$.
However, the integral on the right-hand side of \eqref{eq: ZtoZM} diverges at $s=m-1$
since it contains $\int_{F^*}\tilde\Phi_g(\lambda e_n,e_n,\dots,e_n)\abs{\lambda}^{s-m+1}\ d\lambda$
as an inner integral.

\subsection{} \label{sec: fphi}
Recall that $Q=\mir\rtimes M'^{\fpsi}$, $\delta_Q\rest_{\mir}=\delta_{\mir}=\abs{\det}^m$ and $\delta_Q\rest_{M'^{\fpsi}}=1$.
Hence, we can write $Z(\typSh,\typSh',\Phi,s)$ as
\[
\int_{Q\bs G}\int_{U'\bs\mir}\int_{M'^{\fpsi}}\typSh(lpg)\typSh'(lpg)\Phi(\eta lg)\abs{\det l}^s\ dl\ \abs{\det p}^{s-m}\ dp\
\ \abs{\det g}^s\ dg.
\]
Using Lemma \ref{lem: equicchar} and the identification $\iota:\GL_m\rightarrow M'^\fpsi$ we get
\begin{equation} \label{eq: Zbileq}
Z(\typSh,\typSh',\Phi,s)=\int_{Q\bs G}\Bil_{\Sh}(\typSh(\cdot g),\typSh'(\cdot g),s-m)f_{\Phi,\omega_\pi\omega_{\pi'},s}(g)\ dg
\end{equation}
where for any character $\omega$ of $F^*$
\[
f_{\Phi,\omega,s}(g)=\int_{\GL_m}\Phi'_g(l)\omega(\det l)\abs{\det l}^{ns}\ dl\ \abs{\det g}^s
\]
and $\Phi'_g\in\swrz(\Mat_{m,m}(F))$ is given by $\Phi'_g(X)=\Phi(\mu(X)g)$ where $\mu(X)\in\Mat_{m\times nm}$
is the matrix whose $i$-th row is $\sum_{j=1}^mX_{i,j}e_{nj}$.
%the columns $n,2n,\dots,mn$ of $\Phi(\cdot g)$.
Note that $\Phi\mapsto f_{\Phi,\omega,s}$ is an intertwining map from $\swrz(\Mat_{m,nm}(F))\otimes\abs{\det}^s$
to $\Ind_Q^G\nu_s$ where $\nu_s$ is the character on $Q$ such that $\nu_s\rest_{\mir}=\abs{\det}^{s-m/2}$
and $\nu_s\circ\iota=\omega^{-1}\circ\det$.

\begin{corollary}
There exist $\typSh\in\Whit^{\fpsi_{U'}}(\sigma)$, $\typSh'\in\Whit^{\fpsi_{U'}^{-1}}(\sigma')$
and $\Phi\in\swrz(\Mat_{m,nm}(F))$ such that $Z(\typSh,\typSh',\Phi,s)\equiv1$.
\end{corollary}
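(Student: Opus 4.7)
My plan is to use formula \eqref{eq: Zbileq} to reduce the problem to a local computation near the identity in $Q\bs G$, with both factors in the integrand arranged to be independent of $s$ on their support.

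First I would apply the last assertion of Proposition \ref{prop: unit} at $i=(m-1)n$ to pick $\typSh\in\Whit^{\fpsi_{U'}}(\sigma)$ and $\typSh'\in\Whit^{\fpsi_{U'}^{-1}}(\sigma')$ with $\Bil_{\Sh}(\typSh,\typSh',s)\equiv1$. Since both are smooth vectors, both are right-invariant under some compact open subgroup $K\subset G$, and then for every $g\in K$ and every $s\in\C$,
\[
\Bil_{\Sh}(\typSh(\cdot g),\typSh'(\cdot g),s-m)=\Bil_{\Sh}(\typSh,\typSh',s-m)=1.
\]
In view of \eqref{eq: Zbileq} it would then suffice to exhibit $\Phi\in\swrz(\Mat_{m,nm}(F))$ for which $f_{\Phi,\omega_\pi\omega_{\pi'},s}$ is supported on $QK$ modulo $Q$, equals a nonnegative $s$-independent function on that support, and integrates to $1$.

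To construct such a $\Phi$, I would decompose $\Mat_{m,nm}(F)=V_1\oplus V_2$ where $V_1=\mu(\Mat_{m,m}(F))$ (identified with $\Mat_{m,m}(F)$ via $\mu$, sending $I$ to $\eta$) and $V_2$ is the complementary subspace. Taking $\Phi=\Phi_1\otimes\Phi_2$ with $\Phi_1\in\swrz(V_1)$ the characteristic function of a tiny neighborhood of $\eta$ and $\Phi_2\in\swrz(V_2)$ a suitably scaled characteristic function of a tiny neighborhood of $0$, the plan is the following: for these neighborhoods sufficiently small (with respect to $K$ and to the conductors of $\omega_\pi\omega_{\pi'}$), one expects $\abs{\det X}=1$ and $\omega_\pi\omega_{\pi'}(\det X)=1$ for all $X\in\mu^{-1}(\mathrm{supp}\,\Phi_1)$, which eliminates the $s$-dependent factor $\omega_\pi\omega_{\pi'}(\det X)\abs{\det X}^{ns}$ in the definition of $f_{\Phi,\omega,s}$. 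Moreover, the identity $\mu(X)g=\eta\iota(X)g$ combined with the fact that $\mir$ is the stabilizer of $\eta$ should force $\mathrm{supp}\,f_{\Phi,\omega_\pi\omega_{\pi'},s}\subset QK$, while on $K$ the factor $\abs{\det g}^s$ equals $1$. With these choices in place, $f_{\Phi,\omega_\pi\omega_{\pi'},s}$ becomes a genuinely $s$-independent nonnegative function on $Q\bs QK$, and a final scaling of $\Phi_2$ will make its integral equal to $1$.

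The main obstacle is coordinating the compact opens: one must shrink $K$ first so that $\typSh,\typSh'$ are right-invariant under it, then make the supports of $\Phi_1$ and $\Phi_2$ fine enough that both the support of $f_{\Phi,\omega_\pi\omega_{\pi'},s}$ lies inside $QK$ and the character $\omega_\pi\omega_{\pi'}$ becomes trivial on the relevant image of $\det$. These requirements should be mutually compatible, but verifying the precise shrinkage (particularly the support analysis for $f_{\Phi,\omega,s}$ via the $\mu(X)g=\eta\iota(X)g$ identity, which is where one truly uses the structure of $Q=\mir\rtimes M'^{\fpsi}$) is the one point that merits careful checking.
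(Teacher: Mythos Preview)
Your approach is correct and essentially the same as the paper's, which also localizes via \eqref{eq: Zbileq}: the paper picks $\typSh$ with $\typSh\rest_{\mir}$ supported in $U'\Omega$ for a small neighborhood $\Omega$ of $e$ (this is exactly what underlies the last assertion of Proposition~\ref{prop: unit} that you invoke, via Corollary~\ref{cor: compind}/\ref{C: restomir2}) and takes $\Phi$ supported in a small neighborhood of $\eta$. Your write-up simply unpacks these two localizations in more detail; the identity $\mu(X)=\eta\iota(X)$ you single out is indeed the mechanism behind the support claim, and the coordination of compact opens you worry about is routine since each constraint only requires further shrinking.
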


This follows from Corollary \ref{cor: compind} and \eqref{eq: Zbileq} by taking $\typSh$ such that
$\typSh\rest_{\mir}$ is supported in $U'\Omega$ for a small neighborhood $\Omega$ of $e$
and $\Phi$ supported in a small neighborhood of $\eta$.

Let $\ord_{\Bil}(s)=\ord_{\Bil;\sigma,\sigma'}(s)\ge0$ be the maximal order of pole
of $\Bil_i(W_i,W_i',\cdot)$ at $s$ for $i=0,\dots,(m-1)n$ as we vary $W_i\in\Whit^{\fpsi_{U_i}}(\sigma)$,
$W'_i\in\Whit^{\fpsi_{U_i}^{-1}}(\sigma')$. (Recall that this does not depend on $i$ by \eqref{eq: Bilii+1}.)
Similarly, let $\ord_Z(s)=\ord_{Z;\sigma,\sigma'}(s)\ge0$ be the maximal order of pole of
$Z(\typSh,\typSh',\Phi,\cdot)$ at $s$ as we vary $W\in\Whit^{\fpsi_{U'}}(\sigma)$, $W'\in\Whit^{\fpsi_{U'}^{-1}}(\sigma')$
and $\Phi\in\swrz(\Mat_{m,nm}(F))$.

\begin{corollary} \label{cor: cmprpole}
The bilinear form $\Bil_i(\cdot,\cdot,s)$ on $\Whit^{\fpsi_{U_i}}(\sigma)\times\Whit^{\fpsi_{U_i}^{-1}}(\sigma')$
admits meromorphic continuation in $s$ to a rational function in $q^s$.
Moreover, for every $s\in\C$ we have $\ord_{\Bil}(s-m)\le\ord_Z(s)$ with an equality unless $\omega_\pi\omega_{\pi'}=\abs{\cdot}^{j-ns}$
for some $j\in\{0,\dots,m-1\}$ in which case $\ord_Z(s)\le\ord_{\Bil}(s-m)+1$.
In particular, if $\pi'=\pi^\vee$ then $\Bil_0(\cdot,\cdot)$
is defined if and only if $Z(\cdot,\cdot,\cdot,s)$ is holomorphic at $s=m$ for all data.
\end{corollary}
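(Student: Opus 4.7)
The plan is to reduce everything to the Shalika model using \eqref{eq: Bilii+1} (which makes the analytic behaviour of $\Bil_i(\cdot,\cdot,s)$ independent of $i$), and then exploit the identity \eqref{eq: Zbileq}. The local zeta integral $Z(\typSh,\typSh',\Phi,s)$ is already a rational function of $q^s$: this follows from Proposition \ref{prop: intermsofZe}, the rationality of the classical $\GL_n\times\GL_n$ Rankin--Selberg integrals, and the compactness of $P\bs G$.

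For the rationality of $\Bil_{\Sh}(\typSh,\typSh',s-m)$, I would localize in \eqref{eq: Zbileq}. Since $Q\bs G$ is a compact flag variety and the intertwining $\Phi\mapsto f_{\Phi,\omega,s}$ produces a dense family of smooth sections of $\Ind_Q^G\nu_s$ at generic $s$ (via a Gelfand--Kazhdan type argument analogous to Corollary \ref{cor: compind}), I can arrange $f_{\Phi,\omega_\pi\omega_{\pi'},s}$ to be concentrated in an arbitrarily small neighbourhood of the identity coset. The right-hand side of \eqref{eq: Zbileq} then becomes $\Bil_{\Sh}(\typSh,\typSh',s-m)\cdot c(s)$ where $c(s)$ is a non-identically-zero regular function of $q^{\pm s}$. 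Since the left-hand side is rational, so is $\Bil_{\Sh}$.

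For the pole comparison, the critical input is the analytic structure of $f_{\Phi,\omega,s}$ in $s$. The inner integral $\int_{\GL_m}\Phi'_g(l)\omega(\det l)\abs{\det l}^{ns}\ dl$ is a Godement--Jacquet zeta integral for $\GL_m$ with one-dimensional data $\omega\circ\det$ at parameter $ns$, so its poles in $s$ are at most simple and occur exactly where $\omega=\abs{\cdot}^{j-ns}$ for some $j\in\{0,\dots,m-1\}$ (the zeros of $\prod_{j=0}^{m-1}(1-\omega(\varpi)q^{j-ns})$ in the unramified case; no poles in the ramified case). Off this exceptional locus, $f_{\Phi,\omega,s}$ is holomorphic, and the localization argument forces $\ord_Z(s)=\ord_{\Bil}(s-m)$. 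On the locus, the extra simple pole of $f$ can contribute at most one more pole order, giving $\ord_Z(s)\le\ord_{\Bil}(s-m)+1$; the converse inequality persists because one may pick $\Phi$ whose GJ integral is regular at the given $s_0$, reducing to the previous localization. The final assertion is then immediate: for $\pi'=\pi^\vee$, $\omega_\pi\omega_{\pi'}\equiv\one$, and $\one=\abs{\cdot}^{j-nm}$ would force $j=nm\notin\{0,\dots,m-1\}$ for $n,m\ge 1$, so at $s=m$ we are never on the exceptional locus and $\ord_{\Bil}(0)=\ord_Z(m)$.

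The main obstacle I anticipate is the careful pole analysis of the family $f_{\Phi,\omega,s}$ jointly in $s$ and $g\in G$: one must verify that the Godement--Jacquet poles of the inner integral genuinely propagate through the outer integration over $Q\bs G$ in \eqref{eq: Zbileq} to produce the claimed bounds (rather than merely a divisor bound), and establish the requisite surjectivity/density of $\Phi\mapsto f_{\Phi,\omega,s}$ at the exceptional values of $s$ so that the localization argument captures the exact pole order of $\Bil_{\Sh}$ rather than just an upper bound.
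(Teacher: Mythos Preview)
Your approach is essentially the same as the paper's: reduce to $\Bil_{\Sh}$ via \eqref{eq: Bilii+1}, then exploit \eqref{eq: Zbileq} together with the Godement--Jacquet pole structure of $f_{\Phi,\omega,s}$, using a localized $\Phi$ to recover $\Bil_{\Sh}$ from $Z$.

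The obstacle you flag is not a real difficulty, and the paper's proof dispatches it more directly than you suggest. Rather than arguing density of sections and tracking a rational $c(s)$, simply take $\Phi$ to be the characteristic function of a small neighborhood of $\eta$. Then $\Phi'_g$ is supported near $I_m\in\GL_m$ (for $g$ near $e$), so the Godement--Jacquet integral defining $f_{\Phi,\omega,s}(g)$ is over a compact neighborhood of the identity where $\abs{\det l}=1$; hence $f_{\Phi,\omega,s}$ is supported in $Q\Omega$ for small $\Omega\ni e$ and equals a nonzero constant independent of $s$ there. Substituting into \eqref{eq: Zbileq} gives $Z(\typSh,\typSh',\Phi,s)=c\cdot\Bil_{\Sh}(\typSh,\typSh',s-m)$ with $c\ne0$ constant. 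This yields rationality of $\Bil_{\Sh}$ and the inequality $\ord_{\Bil}(s-m)\le\ord_Z(s)$ at every $s$ simultaneously, with no separate analysis needed at the exceptional values. The reverse inequality follows as you say, from the fact that $\prod_{j=0}^{m-1}L(ns-j,\omega)^{-1}f_{\Phi,\omega,s}$ is entire and $Q\bs G$ is compact.
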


\begin{proof}
It is enough to prove the meromorphic continuation for $i=(m-1)n$, i.e., for $\Bil_{\Sh}$.
This case follows from the equality \eqref{eq: Zbileq}.
Indeed, $f_{\Phi,\omega,s}(g)$ is a generalized Tate integral with respect to $\GL_m$, and hence
\[
L(ns-\frac{m-1}2,\omega\circ\det{}_{\GL_m})f_{\Phi,\omega,s}=
\big(\prod_{i=0}^{m-1}L(ns-i,\omega)\big)f_{\Phi,\omega,s}
\]
is entire.
On the other hand, if $\Phi$ is the characteristic function of a small neighborhood of $\eta$ then
$f_{\Phi,\omega,s}$ is supported in $Q\Omega$ for a small neighborhood $\Omega$ of $e$ and hence
$Z(\typSh,\typSh',\Phi,s)$ is a constant multiple of $\Bil_{\Sh}(\typSh,\typSh',s-m)$.
The corollary follows.
\end{proof}

\begin{remark}
Note that if $\pi$ and $\pi'$ are tempered then $\ord_{Z}(s)=0$ unless $\Re s\in\frac12\Z$ and $\Re s\le m$.
Thus, in general, many poles of $f_{\Phi,\omega,s}$ do not contribute a pole for $Z(\cdot,\cdot,\cdot,s)$.
\end{remark}

\begin{corollary} \label{cor: divbil}
Suppose $n,m>1$, $\pi'=\overline{\pi}$ and $\omega_\pi$ is unitary.
Then any $\Bil_i$ admits a pole in the right half plane $\Re s\ge -1$.
Hence, there exists $\typSh\in\Whit^{\fpsi_{U'}}(\pi)$ such that the integral defining
$\Bil_{\Sh}(\typSh,\overline{\typSh},s)$ diverges for all $s\le -1$.
In particular, $\int_{U'M'^{\fpsi}\bs G}\abs{\typSh(g)}^2\ dg$ diverges.
\end{corollary}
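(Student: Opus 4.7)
The plan is to deduce both divergence statements by cascading the polar behavior from the local zeta integral $Z$ down to the bilinear form $\Bil_{\Sh}$, then converting a pole of the meromorphic continuation into honest divergence via positivity.

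First, invoke Corollary \ref{cor: polem-1} with $\typSh'=\overline{\typSh}$ and $\Phi\ge 0$, so that $Z(\typSh,\overline{\typSh},\Phi,s)$ is a power series in $q^{-s}$ with non-negative coefficients. The proof there exhibits divergence of the integral at the real point $s=m-1$; a $p$-adic analogue of Landau's theorem (the abscissa of convergence of a Dirichlet series with non-negative coefficients is a singular point of its rational-function meromorphic continuation) then furnishes a real pole at some $s_0\ge m-1$. Applying Corollary \ref{cor: cmprpole} at $s_0$: since $\pi'=\overline\pi$ and $\omega_\pi$ is unitary we have $\omega_\pi\omega_{\pi'}=\one_{F^*}$, and the exceptional condition $\omega_\pi\omega_{\pi'}=\abs{\cdot}^{j-ns}$ cuts out on $\R$ only the finite set $\{0,1/n,\dots,(m-1)/n\}$, which lies strictly below $m-1\le s_0$ since $n>1$. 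Hence $\ord_{\Bil}(s_0-m)=\ord_Z(s_0)\ge 1$, yielding a pole of $\Bil_{\Sh}(\cdot,\cdot,s)$ at the real point $s_0-m\ge -1$.

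Next I would polarize. The leading Laurent coefficient of $\Bil_{\Sh}(\cdot,\cdot,s)$ at $s_0-m$ is a non-zero bilinear form on $\Whit^{\fpsi_{U'}}(\sigma)\times\Whit^{\fpsi_{U'}^{-1}}(\overline\sigma)$; composing the second variable with the antilinear isomorphism $W'\mapsto\overline{W'}$ gives a non-zero sesquilinear form on $\Whit^{\fpsi_{U'}}(\sigma)$, whose associated Hermitian quadratic form $\typSh\mapsto\Bil_{\Sh}(\typSh,\overline{\typSh},s)_{\text{leading}}$ is then non-zero by the $\C$-polarization identity. For this $\typSh$ the integrand $\abs{\typSh(g)}^2\abs{\det g}^s$ is pointwise non-negative on $\R$, so the integral is a non-negative Dirichlet-type series; Landau again locates a real pole at the abscissa of convergence (where the integral equals $+\infty$), and the monotonicity recalled in \S\ref{sec: unit} (convergence at a real $s$ propagates upward in $\Re s$) yields divergence at every real $s\le -1$.

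For the ``in particular'' statement, suppose toward a contradiction that $\int_{U'M'^{\fpsi}\bs G}\abs{\typSh(g)}^2\ dg<\infty$. Using $\abs{\typSh(\iota(l)g)}^2=\abs{\typSh(g)}^2$ (Lemma \ref{lem: equicchar}) and $\Phi(\eta\iota(l)g)=\Phi(l\eta g)$, one integrates out $M'^{\fpsi}$ inside $Z$ to obtain
\[
Z(\typSh,\overline{\typSh},\Phi,s)=\int_{U'M'^{\fpsi}\bs G}\abs{\typSh(g)}^2\abs{\det g}^s F_\Phi(\eta g,s)\ dg,
\]
where $F_\Phi(x,s)=\int_{\GL_m}\Phi(lx)\abs{\det l}^{ns}\ dl$ is a Tate/Godement--Jacquet integral. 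Choose $\Phi\ge 0$ compactly supported; in the $M'^{\fpsi}$-representative $g=\mcp(I,g_2,\dots,g_n)k$, the condition $\Phi(\eta g)\ne 0$ forces $g_n$ to be bounded, and Lemma \ref{lem: suppWsh} with $g_1=I$ then forces all $g_i$ and their inverses to be bounded. Consequently $\abs{\det g}$ is bounded on the intersection of the supports, and $F_\Phi(\eta g,s_0)$ is bounded there (the Tate integral converges since $s_0>(m-1)/n$), so the integrand in $Z(\typSh,\overline{\typSh},\Phi,s_0)$ is pointwise dominated by a constant multiple of $\abs{\typSh(g)}^2$; this would give $Z(\typSh,\overline{\typSh},\Phi,s_0)<\infty$, contradicting the pole at $s_0$.

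The principal obstacle is turning the abstract pole of a meromorphic continuation into an honest $+\infty$-valued integral for a single vector, for which both polarization and the Landau abscissa argument are required---both relying essentially on positivity, which is why the unitarity of $\omega_\pi$ is indispensable.
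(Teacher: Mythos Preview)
Your first two paragraphs are correct and essentially reproduce the paper's argument, filling in details the paper leaves implicit: the Landau/Pringsheim step to locate the pole of $Z$ at a \emph{real} $s_0\ge m-1$, the verification that the exceptional set in Corollary~\ref{cor: cmprpole} misses $s_0$ because $(m-1)/n<m-1$ when $n>1$, and the polarization step to pass from a pole of the bilinear family $\Bil_{\Sh}(\cdot,\cdot,s)$ to a pole of $\Bil_{\Sh}(\typSh,\overline{\typSh},s)$ for a specific $\typSh$.

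For the ``in particular'' clause, however, your route through $Z$ is both more complicated than necessary and has a gap as written. The phrase ``contradicting the pole at $s_0$'' is ambiguous: the pole of $Z$ at $s_0$ was produced in Corollary~\ref{cor: polem-1} for \emph{some} data, whereas the $\typSh$ you are now using is the one selected by polarization. To close the argument you would need to invoke (as in the proof of Corollary~\ref{cor: cmprpole}) that a $\Phi$ supported near $\eta$ makes $Z(\typSh,\overline{\typSh},\Phi,s)$ a constant multiple of $\Bil_{\Sh}(\typSh,\overline{\typSh},s-m)$, so that $Z$ for \emph{this} $\typSh$ and this $\Phi$ inherits the pole at $s_0$. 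The paper avoids this detour: it simply unfolds
\[
\int_{U'M'^{\fpsi}\bs G}\abs{\typSh(g)}^2\,dg=\int_{Q\bs G}\int_{U'\bs\mir}\abs{\typSh(pg)}^2\abs{\det p}^{-m}\,dp\,dg,
\]
observes that the inner integral at $g=e$ is $\Bil_{\Sh}(\typSh,\overline{\typSh},-m)=+\infty$ (since $-m\le-1$), and uses smoothness of $\typSh$ under a compact open subgroup to see that this persists on a neighborhood in the compact space $Q\bs G$, forcing the outer integral to diverge.
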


Indeed, by Corollary \ref{cor: polem-1} we have $\ord_Z(s)>0$ for some $s$ with $\Re s\ge m-1$.
Hence, by Corollary \ref{cor: cmprpole} $\ord_{\Bil}(s-m)>0$ for that $s$ (since $n,m>1$).
Therefore, the integral defining $\Bil_{\Sh}(\typSh,\overline{\typSh},s)$ diverges for all $s\le -1$.
In particular,
\[
\int_{U'M'^{\fpsi}\bs G}\abs{\typSh(g)}^2\ dg=
\int_{Q\bs G}\int_{U'\bs\mir}\abs{\typSh(g)}^2\abs{\det g}^{-m}\ dg
\]
diverges.
%which diverges since the integral defining $\Bil_{\Sh}$ diverges at $s=-m$.

In general, we do not know what precisely is the fractional ideal of $\Z[q^{\pm s}]$ generated by
$Z(\typSh,\typSh',\Phi,s)$ where $\typSh\in\Whit^{\fpsi_{U'}}(\sigma)$, $\typSh'\in\Whit^{\fpsi_{U'}}(\sigma')$
and $\Phi\in\swrz(\Mat_{m,nm}(F))$.
If both $\pi$ and $\pi'$ are unitarizable then we expect that this ideal is generated by
$\prod_{i=0}^{m-1}L(s-i,\pi\times\pi')$, i.e., Corollary \ref{cor: unram} is tight in this case.

\begin{example} \label{ex: poleatm12}
Consider $n=m=2$ and $\pi=\abs{\cdot}\times\abs{\cdot}^{-1}\in\Irr_{\gen}\GL_2$.
Then $\pi=\pi^\vee$ and $L(s,\pi\times\pi^\vee)=L(s,\one_{F^*})^2L(s+2,\one_{F^*})L(s-2,\one_{F^*})$.
Therefore, $L(s,\pi\times\pi')L(s-1,\pi\times\pi')$ has a pole at $s=2$.
However, we do not know whether $Z(\cdot,\cdot,\cdot,s)$ is holomorphic at $s=2$, or equivalently
whether $\Bil_0(\cdot,\cdot)$ is well-defined. Recall that $\Sp(\pi,2)$ is not unramified in this case
(cf.~Remark \ref{rem: unr}).
\end{example}

\begin{corollary}
Suppose that $\pi$ is \AT\ and let $\pi'=\pi^\vee$.
Then $\ord_Z(0)=\ord_{\Bil}(-m)+1$. In particular, if $\pi$ is supercuspidal then
$\Bil$ is holomorphic at $s=-m$. Moreover, let
\[
Z^*(\typSh,\typSh',\Phi)=\lim_{s\rightarrow0}(q^s-1)^{\ord_Z(0)}Z(\typSh,\typSh',\Phi,s)
\]
and
\[
\Bil^*_{\Sh}(\typSh,\typSh')=\lim_{s\rightarrow-m}(q^{s+m}-1)^{\ord_{\Bil}(-m)}\Bil_{\Sh}(\typSh,\typSh',s).
\]
Then there exists a constant $c$ such that
\[
Z^*(\typSh,\typSh',\Phi)=c\Phi(0)\int_{Q\bs G}\Bil_{\Sh}^*(\typSh(\cdot g),\typSh'(\cdot g),-m)\ dg
\]
for all $\typSh\in\Whit^{\fpsi_{U'}}(\sigma)$, $\typSh'\in\Whit^{\fpsi_{U'}^{-1}}(\sigma^\vee)$
and $\Phi\in\swrz(\Mat_{m,nm}(F))$.
\end{corollary}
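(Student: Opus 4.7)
The proof hinges on the identity \eqref{eq: Zbileq}. Since $\pi'=\pi^\vee$, the central characters cancel, $\omega_\pi\omega_{\pi'}=\one_{F^*}$, so that
\[
Z(\typSh,\typSh',\Phi,s)=\int_{Q\bs G}\Bil_{\Sh}(\typSh(\cdot g),\typSh'(\cdot g),s-m)f_{\Phi,\one_{F^*},s}(g)\ dg.
\]
First I would analyze the behavior of $f_{\Phi,\one_{F^*},s}(g)$ at $s=0$. By the factorization $L(ns-\frac{m-1}2,\one_{F^*}\circ\det_{\GL_m})=\prod_{i=0}^{m-1}L(ns-i,\one_{F^*})$, the only factor with a pole at $s=0$ is the $i=0$ factor $L(ns,\one_{F^*})$, which is simple. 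By the standard Godement--Jacquet residue computation, the leading Laurent coefficient of the inner zeta integral at this leftmost pole is proportional to $\Phi'_g(0)=\Phi(\mu(0)g)=\Phi(0)$, independent of $g$. Combining with $\abs{\det g}^s\to1$ as $s\to0$, this yields
\[
\lim_{s\to0}(q^s-1)f_{\Phi,\one_{F^*},s}(g)=c\Phi(0)
\]
for a constant $c$ depending only on the Haar measure normalizations.

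Next I would establish $\ord_Z(0)=\ord_{\Bil}(-m)+1$. The upper bound $\ord_Z(0)\le\ord_{\Bil}(-m)+1$ is already contained in Corollary \ref{cor: cmprpole} (the exceptional case applies since $\one_{F^*}=\abs{\cdot}^{0-n\cdot 0}$). For the reverse inequality, I would run a localization argument: choose $\typSh,\typSh'$ that realize the maximal pole order $\ord_{\Bil}(-m)$ of $\Bil_{\Sh}(\cdot,\cdot,s-m)$ at $s=0$, and pick $\Phi$ supported in a small neighborhood of $\eta$ in $\Mat_{m,nm}(F)$ with $\Phi(0)\ne0$. Then $f_{\Phi,\one_{F^*},s}(g)$ is supported in $Q\Omega$ for a small neighborhood $\Omega\ni e$ in $G$, so that on this support $\Bil_{\Sh}(\typSh(\cdot g),\typSh'(\cdot g),s-m)$ is essentially (after shrinking $\Omega$) equal to $\Bil_{\Sh}(\typSh,\typSh',s-m)$, and the integral reduces to a nonzero multiple of $\Bil_{\Sh}(\typSh,\typSh',s-m)f_{\Phi,\one_{F^*},s}(e)$. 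Since the two factors have poles of orders $\ord_{\Bil}(-m)$ and $1$ in disjoint variables, there is no possible cancellation and $\ord_Z(0)\ge\ord_{\Bil}(-m)+1$. The supercuspidal statement follows immediately: the example preceding the corollary gives $\ord=1$ for a segment of length $k=1$, so Corollary \ref{cor: ors=0} yields $\ord_Z(0)\le1$, hence $\ord_{\Bil}(-m)\le0$.

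Finally, for the limit formula I would multiply \eqref{eq: Zbileq} by $(q^s-1)^{\ord_Z(0)}=(q^s-1)^{\ord_{\Bil}(-m)+1}$, factor as $(q^s-1)^{\ord_{\Bil}(-m)}\cdot(q^s-1)$, and take $s\to0$. The left-hand side converges to $Z^*(\typSh,\typSh',\Phi)$ by definition. Inside the integral on the right, the first factor tends pointwise in $g$ to $\Bil^*_{\Sh}(\typSh(\cdot g),\typSh'(\cdot g),-m)$, while the second converges to $c\Phi(0)$ uniformly in $g$ (since the limit does not depend on $g$). Interchanging limit and integration produces the desired identity
\[
Z^*(\typSh,\typSh',\Phi)=c\Phi(0)\int_{Q\bs G}\Bil^*_{\Sh}(\typSh(\cdot g),\typSh'(\cdot g),-m)\ dg,
\]
after absorbing the constant $c$.

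The main obstacle will be justifying this interchange of limit and integration, since $Q\bs G$ is non-compact and the integrand is a priori only a meromorphic extension rather than an absolutely convergent integral near $s=0$. I would handle it by using \eqref{eq: Zbileq} for $\Re s\gg0$ where the integral converges absolutely, splitting off the principal parts of both $\Bil_{\Sh}$ and $f_{\Phi,\one_{F^*},s}$ as rational functions in $q^s$, and applying dominated convergence on each Laurent coefficient; the support estimates of Lemma \ref{lem: suppWsh} together with the convergence argument of Proposition \ref{prop: intermsofZe} (valid at least for $\Re s$ large) should supply a majorant, and the final identity then holds as an equality of meromorphic functions evaluated at their common leading coefficient.
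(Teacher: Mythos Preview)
Your overall strategy via \eqref{eq: Zbileq} is the right one, and the computation of $\lim_{s\to0}(q^s-1)f_{\Phi,\one_{F^*},s}(g)=c\Phi(0)$, the upper bound $\ord_Z(0)\le\ord_{\Bil}(-m)+1$ from Corollary \ref{cor: cmprpole}, and the supercuspidal consequence are all fine. However, your lower bound argument has a genuine gap.

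You write: ``pick $\Phi$ supported in a small neighborhood of $\eta$ in $\Mat_{m,nm}(F)$ with $\Phi(0)\ne0$.'' These two conditions are incompatible: $\eta$ is a rank-$m$ matrix while $0$ is the zero matrix, so any $\Phi$ supported near $\eta$ automatically has $\Phi(0)=0$. In that case the Godement--Jacquet integral $f_{\Phi,\one_{F^*},s}(e)$ is \emph{holomorphic} at $s=0$ (its residue is proportional to $\Phi'_e(0)=\Phi(0)=0$), and your localization only yields $\ord_Z(0)\ge\ord_{\Bil}(-m)$, not the required $+1$. So the ``two factors have poles of orders $\ord_{\Bil}(-m)$ and $1$'' sentence is simply false for such $\Phi$.

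The paper closes this gap with an extra ingredient you did not use: the functional equation \eqref{eq: FE} together with Corollary \ref{cor: absconv}. At $s=m$ one has $Z(\widehat\typSh,\widehat{\typSh'},\hat\Phi,m)=c\,\Bil_{\Sh}(\widehat\typSh,\widehat{\typSh'})\,\hat{\hat\Phi}(0)$, which is a constant multiple of $\Phi(0)$. Feeding this into \eqref{eq: FE} shows that whenever $\Phi(0)=0$ the leading Laurent coefficient $Z^*(\typSh,\typSh',\Phi)$ vanishes, i.e.\ the pole order for such data is strictly less than $\ord_Z(0)$. Now the localization (with $\Phi$ near $\eta$, hence $\Phi(0)=0$) produces data with pole order exactly $\ord_{\Bil}(-m)$; since this must be $<\ord_Z(0)$, one gets $\ord_Z(0)\ge\ord_{\Bil}(-m)+1$, and equality follows from the upper bound. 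The same vanishing statement $Z^*=0$ for $\Phi(0)=0$ is also what makes the limit formula clean: it tells you a priori that $Z^*$ depends on $\Phi$ only through $\Phi(0)$, so the passage to the limit in \eqref{eq: Zbileq} is forced rather than requiring a delicate dominated-convergence argument.
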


\begin{proof}
By the local functional equation and Corollary \ref{cor: absconv}, $Z^*(\typSh,\typSh',\Phi)=0$ if $\Phi(0)=0$.
%is a bilinear form in $\typSh$, $\typSh'$ times $\Phi(0)$.
Therefore, the argument of Corollary \ref{cor: divbil} (taking $\Phi$ supported near $\eta$, which localizes
$f_{\Phi,\one_{F^*},s}$ near $Q$)
shows that $\ord_Z(0)=\ord_{\Bil}(-m)+1$. Since $\Res_{s=0}f_{\Phi,\one_{F^*},s}=c\Phi(0)$ we get the required relation
from \eqref{eq: Zbileq}.
\end{proof}

We may view
\[
\int_{Q\bs G}\Bil_{\Sh}^*(\typSh(\cdot g),\typSh'(\cdot g),-m)\ dg
\]
as a regularization of
\[
\int_{M'^\fpsi U'\bs G}\typSh(g)\typSh'(g)\ dg.
\]
(Recall that the latter diverges for $\typSh'=\overline{\typSh}$ if $m>1$.)

\begin{comment}
\section{}
We have
\[
\int_{N\cap\mir^*}\Bil_0(\typZe(\cdot u),\typZe')\psi_N(u)^{-1}\ du=\typZe(e)\typZe'(e)
\]
for any $\typZe\in\Whit^{\fpsi_N}(\sigma)$, $\typZe'\in\Whit^{\fpsi_N^{-1}}(\sigma')$ and
\[
\int_{U'}\Bil_{\Sh}(\typSh(\cdot u),\typSh')\psi_{U'}(u)^{-1}\ du=\typSh(e)\typSh'(e).
\]
for any $\typSh\in\Whit^{\fpsi_{U'}}(\sigma)$, $\typSh'\in\Whit^{\fpsi_{U'}^{-1}}(\sigma')$.
\end{comment}

\section{The case $n=m=2$} \label{sec: n=m=2}

Given $\sigma=\Sp(\pi,m)$ it is natural to ask what is the asymptotic behavior of a
function in $\Whit^{\fpsi_{U'}}(\sigma)$ or (what is essentially the same thing) in $\Kir^\psi(\sigma)$.
In the case $n=2$ or if $n=3$ and $m=2$, $P'^{\fpsi}$ is a spherical subgroup of $G$
and the problem can in principle be analyzed by the methods of \cite{MR3764130}.
We will only treat the case where $m=n=2$ and $\pi$ is supercuspidal.
For $n>2$ (excluding the case $n=3$ and $m=2$) $P'^{\fpsi}$ is no longer a spherical subgroup and
the problem seems to be more difficult than
the analogous problem for $\Whit^{\fpsi_N}(\sigma)$. We have little to say about it.

We note that in the case where $n=2$ and $\sigma$ is unramified, an explicit formula for the unramified $\typSh$ was given by
F.~Sato \cite{MR2153954}. This is a special case of a formula of Sakellaridis \cite{MR2260513}.
In general, it would be an interesting problem to obtain such an explicit formula in the unramified case for any $m,n$.
Once again, this goes beyond the scope of \cite{MR3117308}.

For the rest of this section we consider the very special case where $n=m=2$.
Fix an infinite-dimensional $\pi\in\Irr\GL_2$ and $\sigma=\Sp(\pi,2)$.
The transition map $\trns:\Whit^{\fpsi_N}(\sigma)\rightarrow\Whit^{\fpsi_{U'}}(\sigma)$ is
\[
\typZe\mapsto\typSh=\int_F\typZe(\bar u(x)\cdot) \ dx\ \ \text{where }
\bar u(x)=\begin{pmallmatrix}1&&&\\&1&&\\&x&1&\\&&&1\end{pmallmatrix}.
\]

Recall that $\pi^\vee\simeq\pi\omega_\pi^{-1}$.
Let $\pi_s=\pi\abs{\cdot}^s$.
Fix a pairing $\pairing:\pi_{\frac12}\otimes\pi_{-\frac12}\rightarrow\C$
such that $\pairing(\pi_{\frac12}(g)v_1\otimes\pi_{-\frac12}(g)v_2)=\omega_\pi(\det g)\pairing(v_1\otimes v_2)$
for all $g\in\GL_2$.
For any $v\in\pi_{\frac12}\otimes\pi_{-\frac12}$ let $\mc_v:\GL_2\times\GL_2\rightarrow\C$ be the twisted matrix coefficient
$\mc_v(g_1,g_2)=\pairing((\pi_{\frac12}(g_1)\otimes\pi_{-\frac12}(g_2))(v))$.
Thus, $v\mapsto\mc_v$ defines an equivariant map from $\pi_{\frac12}\otimes\pi_{-\frac12}$ to
$\Ind_{(Z\times Z)\GL_2^{\diag}}^{\GL_2\times\GL_2}\chi$
where $Z$ is the center of $\GL_2$, $\GL_2^{\diag}$ is $\GL_2$ diagonally embedded in $\GL_2\times\GL_2$
and $\chi((\lambda_1 I_2,\lambda_2 I_2)(g,g))=\abs{\frac{\lambda_1}{\lambda_2}}\omega_\pi(\lambda_1\lambda_2\det g)$.
If $\pi$ is supercuspidal then the image is contained in $\ind_{(Z\times Z)\GL_2^{\diag}}^{\GL_2\times\GL_2}\chi$.
If $\pi$ is \AT\ then upon identifying
$\pi_{\frac12}\otimes\pi_{-\frac12}$ with $\Whit^{\fpsi_{N_M}}(\pi_{\frac12}\otimes\pi_{-\frac12})$ we may realize $\pairing$ as
\begin{equation} \label{eq: realpair}
\pairing(W)=\int_{F^*}W(\diag(1,-t,1,t))\omega_\pi(t)^{-1}\ d^*t,\ \ W\in\Whit^{\fpsi_{N_M}}(\pi_{\frac12}\otimes\pi_{-\frac12}).
\end{equation}

It follows from the Schur orthogonality relations that if $\pi,\pi'\in\Irr_{\cusp}\GL_2$ with $\omega_\pi\omega_{\pi'}=1$ then
$\pi'$ is equivalent to $\pi^\vee$ if and only if
\begin{equation} \label{eq: orth}
\int_{Z\bs\GL_2}\mc_v(g,1)\mc_{v'}(g,1)\frac{dg}{\abs{\det g}}\ne0
\end{equation}
for some $v\in\pi_{\frac12}\otimes\pi_{-\frac12}$, $v'\in\pi'_{\frac12}\otimes\pi'_{-\frac12}$.

Recall that in the case at hand, $Q=P'=P^w$ where $w=\begin{pmallmatrix}1&&&\\&&1&\\&1&&\\&&&1\end{pmallmatrix}$
and that $\omega_\pi^{\fpsi}$ is the character of $P'^{\fpsi}$
whose restriction to $U'$ is $\fpsi_{U'}$ and whose composition with $\iota$ is $\omega_\pi\circ\det$.
Also, $\norm{\left(\begin{smallmatrix}a&b\\c&d\end{smallmatrix}\right)}=\max(\abs{a},\abs{b},\abs{c},\abs{d})$.

\begin{proposition} \label{prop: structrest}
Suppose that $\pi\in\Irr_{\cusp}\GL_2$. Then we have a short exact sequence of $Q$-modules
\[
0\rightarrow\ind_{P'^{\fpsi}}^Q\omega_\pi^{\fpsi}\rightarrow\sigma\rest_Q\xrightarrow{A}
\pi_{\frac12}\otimes\pi_{-\frac12}\rightarrow0
\]
where $Q$ acts on $\pi_{\frac12}\otimes\pi_{-\frac12}$ through $M'$ (identified with $\GL_2\times\GL_2$
via $\mcp$).
Upon identifying $\sigma\rest_Q$ with $\Kir^{\psi}(\sigma)$, the map $A$ is characterized by the property that
for any $\typKir\in\Kir^{\psi}(\sigma)$ there exists $c>0$ such that
\begin{equation} \label{eq: asympKS}
\typKir(\mcp(g_1,g_2))=\mc_\varphi(g_1,g_2)
\text{ for all $g_1,g_2\in\GL_2(F)$ such that $\norm{g_2^{-1}g_1}\le c$}
\end{equation}
where $\varphi=A(\typKir)$.
Moreover,
\begin{equation} \label{eq: csp}
\text{$\typKir(\mcp(\cdot,1))$ is compactly supported in $\{g\in\GL_2(F):\norm{g}\ge c\}$.}
\end{equation}
\end{proposition}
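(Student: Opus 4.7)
The plan is to analyze $\sigma\rest_Q$ through the Jacquet functor for $U_Q$, exploiting the coincidence $U_Q=U'$ that holds in the case $n=m=2$. Under this coincidence $P'^{\fpsi}=U_Q\rtimes M'^{\fpsi}$ sits inside $Q=U_Q\rtimes M'$ with $M'^{\fpsi}=\iota(\GL_2)$ the diagonal in $M'\cong\GL_2\times\GL_2$. The coset space $Q/P'^{\fpsi}\cong M'/M'^{\fpsi}$ is parametrized by $\GL_2$ via the section $g\mapsto\mcp(g,1)$, so restriction of a function on $Q$ to this section identifies $\Ind_{P'^{\fpsi}}^Q\omega_\pi^{\fpsi}$ with a space of smooth functions on $\GL_2$ and $\ind_{P'^{\fpsi}}^Q\omega_\pi^{\fpsi}$ with $C_c^\infty(\GL_2)$.

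First I would compute the unnormalized Jacquet module $\sigma_{U_Q}$: since $\sigma=\Sp(\pi,2)$ is a ladder representation with $\pi$ supercuspidal and $Q=P^w$ is conjugate to the standard parabolic $P_{2,2}$ by the Weyl element $w=(23)$ swapping the two blocks, the Speh Jacquet-module formula from \S\ref{sec: mhmgns} combined with the $\delta_Q^{1/2}$-twist yields $\sigma_{U_Q}=\pi_{\frac12}\otimes\pi_{-\frac12}$ as an $M'$-module via $\mcp$. Next I would verify that $(\ind_{P'^{\fpsi}}^Q\omega_\pi^{\fpsi})_{U_Q}=0$: under the section identification, for $u\in U_Q\subset P'^{\fpsi}$ the right translation on $F\in C_c^\infty(\GL_2)$ becomes pointwise multiplication $(R_uF)(r)=\fpsi_{U_Q}(rur^{-1})F(r)$, using that $M'$ normalizes $U_Q$ and $\omega_\pi^{\fpsi}\rest_{U_Q}=\fpsi_{U_Q}$. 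For every $r\in\GL_2$ the map $u\mapsto rur^{-1}$ is a bijection on $U_Q$, so $u\mapsto\fpsi_{U_Q}(rur^{-1})$ remains a non-trivial character of $U_Q$; locally one picks $u_0$ with $\fpsi_{U_Q}(ru_0r^{-1})-1$ non-vanishing on a neighborhood, multiplies by its inverse, and applies a partition of unity to write any $F\in C_c^\infty(\GL_2)$ as a sum of elements $R_uG-G$, forcing the coinvariants to vanish.

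Exactness of the Jacquet functor, combined with surjectivity of $\Kir^\psi(\sigma)\twoheadrightarrow\sigma_{U_Q}$ and Corollary~\ref{cor: inj} (giving $\Kir^\psi(\sigma)\cong\sigma\rest_Q$ since $\pi$ is tempered), then yields a surjection $\Kir^\psi(\sigma)/\ind_{P'^{\fpsi}}^Q\omega_\pi^{\fpsi}\twoheadrightarrow\pi_{\frac12}\otimes\pi_{-\frac12}$. To upgrade this to the asserted short exact sequence I would show the kernel of the Jacquet map on $\Kir^\psi(\sigma)$ equals $\ind_{P'^{\fpsi}}^Q\omega_\pi^{\fpsi}$ exactly, using Lemma~\ref{lem: compind} (irreducibility of $\ind_{U'}^\mir\fpsi_{U'}$) together with an analogue of Bernstein's theorem for $\mir$-equivariant distributions and the supercuspidality of $\pi$ to rule out intermediate sub-quotients. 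Defining $A$ as the resulting quotient map, one realizes $\pi_{\frac12}\otimes\pi_{-\frac12}$ concretely through the matrix-coefficient embedding $\varphi\mapsto\mc_\varphi$ (injective with image in $\ind_{(Z\times Z)\GL_2^{\diag}}^{\GL_2\times\GL_2}\chi$ for supercuspidal $\pi$): $\varphi=A(\typKir)$ is characterized as the unique matrix coefficient agreeing with $\typKir$ modulo $\ind_{P'^{\fpsi}}^Q\omega_\pi^{\fpsi}$. Equality on a neighborhood $\|g_2^{-1}g_1\|\le c$, i.e.~\eqref{eq: asympKS}, and the support statement \eqref{eq: csp} then follow by combining Lemma~\ref{lem: suppWsh} (support bound $\|g\|\le C$) with the fact that the difference $\typKir-\mc_\varphi$, being in $\ind_{P'^{\fpsi}}^Q\omega_\pi^{\fpsi}$, is compactly supported on $\GL_2$ in the section variable.

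The main obstacle is the kernel identification: proving that the kernel of the Jacquet-quotient map on $\Kir^\psi(\sigma)$ equals $\ind_{P'^{\fpsi}}^Q\omega_\pi^{\fpsi}$ \emph{exactly}, rather than merely containing it. This requires an analogue of Bernstein--Zelevinsky derivative theory adapted to the non-standard mirabolic $\mir_{2,2}$, and supercuspidality of $\pi$ enters crucially to exclude intermediate sub-quotients between $\ind_{P'^{\fpsi}}^Q\omega_\pi^{\fpsi}$ and the Jacquet kernel.
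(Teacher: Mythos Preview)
Your approach is genuinely different from the paper's, and it has a real gap that you yourself flag.

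The paper does not go through the Jacquet functor at all. It works directly in the Zelevinsky model: writing $\typKir=\trns(\typZe)\rest_Q$ as the one–variable integral $\int_F\typZe(\bar u(x)\mcp(g,1))\,dx$, it uses the Bruhat--type factorization
\[
\bar u(x)=u(x^{-1})\diag(1,1,x,x)\diag(1,-x^{-1},1,x^{-1})\,w\,u(x^{-1})
\]
to rewrite the integrand for large $\abs{x}$, and then invokes supercuspidality of $\pi$ (the compact-mod-center support of Whittaker functions) to show that for $g$ outside a compact set with $\norm{g}\le C_1$ both sides of this rewriting vanish for small $\abs{x}$ as well. Integrating in $x$ gives the explicit formula $\typKir(\mcp(g,1))=\mc_\varphi(g,1)$ with $\varphi=(1-q^{-1})\typZe(\cdot\,w)\rest_M$, and the short exact sequence then drops out of this asymptotic together with Corollary~\ref{cor: compind}. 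So in the paper the asymptotic \eqref{eq: asympKS} is the \emph{input}, not a consequence of the exact sequence.

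Your route reverses this: establish the exact sequence abstractly, then read off the asymptotic. The obstacle you name---identifying the Jacquet kernel $\sigma(U')$ with $\ind_{P'^{\fpsi}}^Q\omega_\pi^{\fpsi}$ inside $\Kir^\psi(\sigma)$---is genuine, and the tools you invoke (an analogue of Bernstein--Zelevinsky derivatives for $\mir_{2,2}$) are precisely what the paper says are unavailable in general; see the discussion after Conjecture~\ref{conj: injvtve}. There is also a second gap you gloss over: even granting the abstract isomorphism $\Kir^\psi(\sigma)/\ind\simeq\pi_{\frac12}\otimes\pi_{-\frac12}$, you still need to know that the image of $\typKir$ in this quotient is realized \emph{as a function on $\GL_2$} by the matrix coefficient $\mc_\varphi$. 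Both $\Kir^\psi(\sigma)/\ind$ and $\mc(\pi_{\frac12}\otimes\pi_{-\frac12})$ sit inside $\Ind_{P'^{\fpsi}}^Q\omega_\pi^{\fpsi}/\ind$, and you would need to argue they coincide there (or at least overlap) to get \eqref{eq: asympKS}; an abstract $Q$-module isomorphism does not do this for you. The paper's explicit computation produces the matrix-coefficient identification directly and avoids both issues.
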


\begin{proof}
First note that the property \eqref{eq: asympKS} determines $\varphi$ uniquely (if it exists).
It then also follows that if \eqref{eq: asympKS} is satisfied then $A$ necessarily intertwines the $Q$-action.
Moreover, if \eqref{eq: csp} is satisfied then $\varphi=0$ if and only if $\typKir$ is compactly supported modulo $P'^{\fpsi}$.
Also note that in the relation \eqref{eq: asympKS} it is enough to consider $g_2=1$ since both sides are
$(\GL_2^{\diag},\omega_\pi\circ\det)$-equivariant. (For simplicity write $g=g_1$.)

Recall that by Lemma \ref{lem: suppWsh} there exists a constant $C_1>1$ such that $\typKir(\mcp(g,1))=0$ unless $\norm{g}\le C_1$.

Suppose that $\typKir=\trns(\typZe)\rest_Q$.
Write $g=u'(y)\diag(t_1,t_2)k$ where $u'(y)=\begin{pmallmatrix}1&y\\&1\end{pmallmatrix}$ with $k\in\GL_2(\OOO)$.
We claim that there exists $C_3$ such that
\begin{equation} \label{eq: Weq}
\typZe(\bar u(x)\mcp(g,1))=
\abs{x}^{-1}\omega_\pi(x)\typZe(\diag(1,-x^{-1},1,x^{-1})\diag(g,I_2)w)
\end{equation}
for all $x\in F$ such that $\abs{x}>C_3\abs{t_2}$.

Indeed, write
\[
\bar u(x)=u(x^{-1})\diag(1,1,x,x)\diag(1,-x^{-1},1,x^{-1})wu(x^{-1})\ \ \text{where }
u(y)=\begin{pmallmatrix}1&&&\\&1&y&\\&&1&\\&&&1\end{pmallmatrix}.
\]
Then
\[
\bar u(x)\mcp(g,1)=u(x^{-1})\diag(1,1,x,x)\diag(1,-x^{-1},1,x^{-1})w\mcp(g,1)u(t_2x^{-1})^{\mcp(k,1)}
\]
where the superscript denotes conjugation. Our claim follows since $w\mcp(g,1)w^{-1}=\diag(g,I_2)$.

Next, we show that there exists a compact set $C$ of $\GL_2(F)$ such that if $\norm{g}\le C_1$ and $g\notin C$ then
both sides of \eqref{eq: Weq} vanish if $\abs{x}\le C_3\abs{t_2}$.

First note that the condition $\norm{g}\le C_1$ means that $\abs{t_1}, \abs{t_2}, \abs{t_2y}\le C_1$.
Now,
\[
\diag(1,-x^{-1},1,x^{-1})\diag(g,I_2)\in N\diag(t_1,-x^{-1}t_2,1,x^{-1})\GL_4(\OOO).
\]
Therefore, if the right-hand side of \eqref{eq: Weq} is non-zero then by the supercuspidality of $\pi$,
$x$ and $t_1t_2^{-1}$ are confined to a compact subset of $F^*$. Since $\abs{x}\le C_3\abs{t_2}$ we infer
that $t_2$ belongs to a compact set of $F^*$, and hence also $t_1$. Finally, $\abs{y}$ is bounded since $\abs{t_2y}\le C_1$.
Hence, $g$ belongs to a compact set.

On the other hand,
\[
\bar u(x)\mcp(g,1)\in N\diag(t_1,1,t_2,1)\bar u(t_2^{-1}x)K
\]
and since $\abs{t_2^{-1}x}\le C_3$ we infer from the supercuspidality of $\pi$ that if the left-hand side of \eqref{eq: Weq}
is non-zero then $t_1$, $t_2$ belong to a compact subset of $F^*$. As before, $g$ belongs to a compact set. Our claim follows.

In conclusion, \eqref{eq: Weq} holds for all $x\in F$ provided that $\norm{g}\le C_1$ and $g\notin C$.
Integrating \eqref{eq: Weq} over $x\in F$ we conclude that if $\norm{g}\le C_1$ and $g\notin C$ then
\[
\typKir(\mcp(g,1))=(1-q^{-1})\int_{F^*}\omega_\pi(t^{-1})\typZe(\diag(1,-t,1,t)\diag(g,I_2)w)\ d^*t.
\]
By \eqref{eq: realpair} this is equal to $\mc_\varphi(g,1)$
where $\varphi\in\Whit^{\fpsi_{N_M}}(\pi_{\frac12}\otimes\pi_{-\frac12})$ is $(1-q^{-1})$
times the restriction of $\typZe(\cdot w)$ to $M$.
Thus, \eqref{eq: asympKS} and \eqref{eq: csp} hold.
In view of Corollary \ref{cor: compind} this proves the proposition.
(Note that $\typZe\mapsto\varphi$ is $Q$-equivariant since $w$ conjugates $P$ to $Q$.)
\end{proof}

\begin{remark}
%We can easily find $0\ne\varphi\in\sigma\subset\pi\abs{\det}^{-\frac12}\times\pi\abs{\det}^{\frac12}$
%such that $0=\varphi(e)\in\pi\otimes\pi$. Indeed, $\sigma$ is the image of the intertwining operator
%\[
%M\varphi(g)=\int_U\varphi(wug)\ du.
%\]
%For any $X\in\Mat_{2,2}(F)$ let $u(X)=\begin{pmatrix}I_2&X\\&I_2\end{pmatrix}\in U$.
%Take $\varphi$ supported in $PwU$ such that $\varphi(wu(X))=\phi(X)v$ for some $\phi\in\swrz(\Mat_{2,2}(F))$ and $v\in\pi\otimes\pi^\vee$.
%Then $M\varphi(e)=(\int_{\Mat_{2,2}(F)}\phi(X)\ dX)v$. On the other hand,
%\begin{multline*}
%M\varphi(w)=\int_{\bar U}\varphi(\bar u)\ d\bar u=
%\int_{\Mat_{2,2}(F)}\varphi(\begin{pmatrix}Y^{-1}&*\\&Y\end{pmatrix}wu(Y^{-1}))\ dY
%\\=\int_{\Mat_{2,2}(F)}(\pi(Y^{-1})\otimes\pi(Y))(v)\abs{\det Y}^{-3}\phi(Y^{-1})\ dY\\=
%\int_{\Mat_{2,2}(F)}(\pi(Y)\otimes\pi(Y^{-1}))(v)\abs{\det Y}^{-1}\phi(Y)\ dY.
%\end{multline*}
%Take an arbitrary $0\ne v\in\pi\otimes\pi$ and let $Y_0\in\GL_2(F)$ be such that $(\pi(Y_0)\otimes\pi(Y_0^{-1}))v$
%is not proportional to $v$. By taking $\phi$ supported in the union of small neighborhoods of the identity
%and $Y_0$ we can arrange that $M\varphi(e)=0$ while $M\varphi(w)\ne0$.
%Note that $\typSh\rest_{\mir}$ is compactly supported modulo $U'$ if and only if $\varphi=0$ in the notation
%of Lemma \ref{lem: asymp}.
It follows from (the proof of) Proposition \ref{prop: structrest} that there exists a non-zero $\typZe\in\Whit^{\fpsi_N}(\sigma)$
that vanishes on $M$ (in which case $\trns\typZe(\cdot w)\rest_Q\in\Kir^{\psi}(\sigma)$
is compactly supported modulo $P'^{\fpsi}$).
This can be also shown directly by realizing $\sigma$ as the image of the intertwining operator
\[
\pi_{\frac12}\times\pi_{-\frac12}\rightarrow\pi_{-\frac12}\times\pi_{\frac12}
\]
and taking the image of a suitable vector in $\pi_{\frac12}\times\pi_{-\frac12}$ that is supported in the big cell.
\end{remark}

\begin{corollary}
Suppose that $\pi\in\Irr_{\cusp}\GL_2$ and let $\pi'=\pi^\vee$.
Then the poles of the bilinear form $\Bil_i(\typSh,\typSh',s)$, as we vary $\typSh\in\Whit^{\fpsi_{U'}}(\sigma)$
and $\typSh'\in\Whit^{\fpsi_{U'}^{-1}}(\sigma')$, coincide with those of $L(s+1,\pi\times\pi^\vee)$.
\end{corollary}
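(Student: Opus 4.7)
The plan is to combine the general upper bound from Corollary~\ref{cor: cmprpole} with a direct Mellin computation of $\Bil_{\Sh}$ via Proposition~\ref{prop: structrest}. Since $\Bil_i$ has an $i$-independent pole structure by Proposition~\ref{prop: unit}, I may focus on $\Bil_{\Sh}$. Writing $t=s-m=s-2$, Corollaries~\ref{cor: cmprpole} and~\ref{cor: unram} give $\ord_{\Bil}(t)\le\ord_Z(t+2)\le\ord_tL(t+2,\pi\times\pi^\vee)+\ord_tL(t+1,\pi\times\pi^\vee)$, with equality in the first step outside the exceptional set $E=\{t:t+2\equiv 0\text{ or }1/2\pmod{\pi i/\log q}\}$. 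For $\pi\in\Irr_{\cusp}\GL_2$ the poles of $L(\cdot,\pi\times\pi^\vee)$ lie on $\Re=0$, so any $t\notin E$ with a pole of $L(t+1,\cdot)$ necessarily has $\Re t=-1$, at which $L(t+2,\cdot)$ is automatically regular. Hence $\ord_{\Bil}(t)\le\ord_tL(t+1,\pi\times\pi^\vee)$ already holds outside $E$, and only exceptional $t\in E$ require direct analysis.

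For the direct argument I would use Proposition~\ref{prop: structrest} to split $\typKir(\mcp(\cdot,1))=\mc_\varphi(\cdot,1)\chi_{\{\|g\|\le c\}}+\typKir_0$, with $\typKir_0$ compactly supported in $\GL_2$, and similarly for $\typKir'$. Since the two summands have disjoint supports, all cross terms vanish and $\Bil_{\Sh}(\typSh,\typSh',s)=I_1(s)+P(q^{\pm s})$ with $P$ a Laurent polynomial and $I_1(s)=\int_{\|g\|\le c}\mc_\varphi(g,1)\mc_{\varphi'}(g,1)\abs{\det g}^s\,dg$. Because $\pi'=\pi^\vee$, this integrand is $(Z,\abs{\cdot}^{2+2s})$-equivariant for the centre $Z$ of $\GL_2$, and compact support modulo $Z$ of matrix coefficients of the supercuspidal $\pi$ lets me carry out a Mellin transform along $Z$, obtaining
\[
I_1(s)=\frac{c^{2+2s}J(s)}{1-q^{-(2+2s)}},\qquad J(s)=\int_{Z\bs\GL_2}\mc_\varphi(g,1)\mc_{\varphi'}(g,1)\abs{\det g}^s\|g\|^{-(2+2s)}\,d\bar g,
\]
with $J\in\C[q^s,q^{-s}]$ (the integrand is visibly $Z$-invariant). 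The denominator vanishes only at $s\equiv-1\pmod{\pi i/\log q}$, which is disjoint from $E$; hence $I_1$ is regular on $E$, pinning down $\ord_{\Bil}=0$ there and completing the upper bound.

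For the converse, I evaluate $J$ at the potential poles $s_k=-1+\pi i k/\log q$: one checks $\|g\|^{-(2+2s_k)}=1$ and $\abs{\det g}^{s_k}=\abs{\det g}^{-1}\chi_0(\det g)^k$, where $\chi_0$ is the unramified quadratic character of $F^*$. Thus $J(s_k)$ reduces to the $\chi_0^k\circ\det$-twisted Schur integral of matrix coefficients of $\pi$ and $\pi^\vee$, which by Schur orthogonality is non-zero for appropriate $\varphi,\varphi'$ (realised via the surjectivity of $A$ in Proposition~\ref{prop: structrest}) precisely when $\chi_0^k\circ\det$ is a self-twist character of $\pi$. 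By the Jacquet--Shalika criterion, this same condition characterises $s_k+1$ as a pole of $L(\cdot,\pi\times\pi^\vee)$, i.e., $s_k$ as a pole of $L(s+1,\pi\times\pi^\vee)$, so the realisable poles of $\Bil$ coincide with those of $L(s+1,\pi\times\pi^\vee)$. The main obstacle is the exceptional set $E$: the general bound from Section~\ref{sec: locint} permits one extraneous order of pole at each $t\in E$, and only the explicit Mellin analysis of $I_1$ — using $\pi'=\pi^\vee$ in an essential way to produce the $Z$-equivariance with character $\abs{\cdot}^{2+2s}$ — rules it out.
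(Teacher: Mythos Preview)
Your approach is correct and, at its core, the same as the paper's: both arguments use Proposition~\ref{prop: structrest} to replace $\typKir(\mcp(g,1))$ by the matrix coefficient $\mc_\varphi(g,1)$ near $\|g\|\to 0$, perform the Mellin transform along the center $Z$ of $\GL_2$ to extract the factor $(1-q^{-(2+2s)})^{-1}$, and then identify the residues at $q^{2+2s}=1$ with (twisted) Schur orthogonality integrals, which are non-vanishing precisely when $\pi\simeq\pi\chi_0^k$ --- exactly the pole condition for $L(s+1,\pi\times\pi^\vee)$.

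The one difference is that your first paragraph, which routes the upper bound outside the exceptional set $E$ through Corollaries~\ref{cor: cmprpole} and~\ref{cor: unram}, is unnecessary. Your own direct computation $\Bil_{\Sh}=I_1+P$ with $P$ a Laurent polynomial and $I_1=c^{2+2s}J(s)/(1-q^{-(2+2s)})$ already shows that the only possible poles lie at $s\in -1+\tfrac{\pi i}{\log q}\Z$, for \emph{all} $s$ and all choices of data, not merely for $s\in E$. The paper dispenses with the Section~\ref{sec: locint} machinery and goes straight to this computation. Your detour is not wrong, just redundant; the heart of both proofs is identical.
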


\begin{proof}
We may assume without loss of generality that $\pi$ is unitary. Then
\[
\Bil(\typSh,\overline{\typSh},s-1)=\int_{\GL_2}\abs{\det g}^{s-1}\abs{\typSh(\mcp(g,1))}^2\ dg.
\]
By Proposition \ref{prop: structrest}, the analytic properties are governed by those of
\[
\int_{\GL_2:\norm{g}\le1}\abs{\det g}^{s-1}\abs{\mc_\varphi(g,1)}^2\ dg
\]
which can be written as
\begin{multline*}
\int_{Z\bs\GL_2}(\int_{F^*:\abs{\lambda}\le\norm{g}^{-1}}\abs{\lambda}^{2s}\ d\lambda)\abs{\mc_\varphi(g,1)}^2\abs{\det g}^{s-1}\ dg
\\=(1-q^{-2s})^{-1}\int_{Z\bs\GL_2}\norm{g}^{-2s}\abs{\mc_\varphi(g,1)}^2\abs{\det g}^{s-1}\ dg.
\end{multline*}
Thus, the poles are simple and are confined to $q^{2s}=1$.
If $q^s=1$ then the residue is clearly non-zero. If $q^s=-1$ then the residue is a constant multiple of
\[
\int_{Z\bs\GL_2}\abs{\mc_\varphi(g,1)}^2\omega(\det g)\abs{\det g}^{-1}\ dg
\]
where $\omega$ is the non-trivial quadratic unramified character of $F^*$.
Thus, by \eqref{eq: orth} the residue is non-zero if and only if $\pi\simeq\pi\omega$.
This matches exactly with the poles of $L(s,\pi\times\pi^\vee)$ (\cite[Proposition 8.1]{MR701565}).
\end{proof}

\begin{comment}
\subsection{}
\Erez{Not clear whether should be included. Just wrote it in order to remember..}
Consider $\pi=\overbrace{\one_{F^*}\times\dots\times\one_{F^*}}^n\in\Irr_{\temp}\GL_n$
so that $\sigma=\Ind_{P'}^G\one_{M'}$.
Let $\typSh$ be the unramified vector in $\Whit^{\fpsi_{U'}}(\sigma)$ such that $\typSh(e)=1$.
Consider the restriction of $\typSh$ to $\mir\cap M'$.
Let $\varphi$ be the unramified element in $\Ind_{P'}^G1$ with $\varphi(e)=1$.
Then for any $l_1,\dots,l_{n-1}\in\GL_m$
\[
\typSh(\diag(l_1,\dots,l_{n-1},1))
\]
is $\delta_{P'}(l)^{\frac12}$ times the Fourier transform $\hat f$ of $\varphi(w\cdot)\rest_{U'}$ at
$l_2^{-1}l_1,l_3^{-1}l_2,\dots,l_{n-1}$.
Note that $\delta_{P'}\rest_{\mir\cap P'}=\delta_{P'\cap\mir}\abs{\det}^m$.
Therefore
\[
\Bil_{\Sh}(\typSh,\overline{\typSh})=
\int_{\GL_m^{n-1}}\abs{\hat f(l_2^{-1}l_1,\dots,l_{n-1})}^2\abs{\det l_1\dots l_{n-1}}^m\ dl_1\dots dl_{n-1}.
\]
Making change of variables $l_1\mapsto l_2l_1$, $l_2\mapsto l_3l_2$, \dots $l_{n-2}\mapsto l_{n-1}l_{n-2}$ we get
\begin{multline*}
\int_{\GL_m^{n-1}}\abs{\hat f(l_1,\dots,l_{n-1})}^2\prod_{i=1}^{n-1}\abs{\det l_i}^{mi}\ dl_1\dots\ dl_{n-1}\\=
\int_{\Mat_{m,m}^{n-1}}\abs{\hat f(X_1,\dots,X_{n-1})}^2\prod_{i=1}^{n-1}\abs{\det X_i}^{m(i-1)}\ dX_1\dots\ dX_{n-1}.
\end{multline*}
\begin{question}
Does the integral
\[
\int_{\Mat_{m,m}^{n-1}}\abs{\hat f(X_1,\dots,X_{n-1})}^2\prod_{i=1}^{n-1}\ dX_1\dots\ dX_{n-1}
\]
converge?
\end{question}
\end{comment}

\section{Global heuristics} \label{sec: global}

Let $F$ be a number field with ring of adeles $\A$.
We consider $G=\GL_{nm}$ as a group over $F$ and write $G(\A)^1=\{g\in G(\A):\abs{\det g}=1\}$.
As before, let $Q$ be the stabilizer of $\spn\{e_{ni}:i=1,\dots,m\}$ in $G$ -- a maximal non-standard parabolic subgroup of $G$
of type $((n-1)m,m)$.
For any $\Phi\in\swrz(M_{m,nm}(\A))$ and a Hecke character $\omega$ of $F^*\bs\A^*$
consider the degenerate normalized Eisenstein series that is given by
\begin{multline*}
\Eisen_{\Phi,\omega}(g,s)=\int_{\GL_m(F)\bs\GL_m(\A)}\sum_{\gamma\in M_{m,mn}(F):\rk\gamma=m}\Phi(z^{-1}\gamma g)
\abs{\det z}^{-ns}\omega(\det z)^{-1}\abs{\det g}^s\ dz\\=
\sum_{\gamma\in Q\bs G}f_{\Phi,\omega,s}(\gamma g)
\end{multline*}
for $\Re s\gg0$ (more precisely, $\Re s>m$ if $\omega$ is unitary) where as in \S\ref{sec: fphi}
\[
f_{\Phi,\omega,s}(g)=\int_{\GL_m(\A)}\Phi'_g(l)\omega(\det l)\abs{\det l}^{ns}\ dl\ \abs{\det g}^s
\]
and $\Phi'_g\in\swrz(\Mat_{m,m}(\A))$ is given by $\Phi'_g(X)=\Phi(\mu(X)g)$ where $\mu(X)\in\Mat_{m\times nm}(\A)$
is the matrix whose $i$-th row is $\sum_{j=1}^mX_{i,j}e_{nj}$.
By the method of Tate's thesis (which goes back to Riemann) $\Eisen_{\Phi,\omega}$ admits a meromorphic continuation
with finitely many (simple) poles and a functional equation
\[
\Eisen_{\Phi,\omega}(g,s)=\Eisen_{\hat\Phi,\omega^{-1}}(\,^tg^{-1},m-s).
\]

As before, let $P=M\ltimes U$ be the standard maximal parabolic subgroup of $G$ of type $(\overbrace{n,\dots,n}^m)$
and let $\abs{\cdot}_M:M(\A)\rightarrow \R_{>0}^m$ be the homomorphism
\[
\abs{\diag(l_1,\dots,l_m)}=(\abs{\det l_1},\dots,\abs{\det l_m}).
\]
We extend $\abs{\cdot}_M$ to a left $U(\A)$ and right $K$-invariant function $\abs{\cdot}_P$ on $G(\A)$
where $K$ is the standard maximal compact subgroup of $G(\A)$.
For any $x=(x_1,\dots,x_m)\in\R_{>0}^m$ and $\lambda=(\lambda_1,\dots,\lambda_m)\in\C^m$ we write
$x^\lambda=\prod_ix_i^{\lambda_i}$.

Let $\pi=\otimes\pi_v$ be an irreducible cuspidal representations of $\GL_n(\A)$.
Let $\phi:G(\A)\rightarrow\C$ be a smooth function such that for all $g\in G(\A)$
the function $l\in M(\A)\mapsto\delta_P(l)^{-\frac12}\phi(l g)$ belongs to the space
of $\overbrace{\pi\otimes\dots\otimes\pi}^m$. The Eisenstein series
\[
E(\phi,\lambda,g)=\sum_{\gamma\in P(F)\bs G(F)}\phi(\gamma g)\abs{\gamma g}_P^\lambda
\]
converges if $\Re(\lambda_i-\lambda_{i+1})>n$ for all $i=1,\dots,m-1$ and admits a meromorphic continuation to $\C^n$.
The limit
\[
\varphi(g)=\lim_{\lambda\rightarrow(\frac{m-1}2,\dots,\frac{1-m}2)}(\lambda_1-\lambda_2-1)\dots
(\lambda_{m-1}-\lambda_m-1)E(\phi,\lambda,g)
\]
exists and is a square-integrable automorphic form on $G(F)\bs G(\A)^1$ which is non-zero for a suitable $\phi$.
As we vary $\phi$, we obtain an irreducible automorphic representation of $G(\A)$ whose local components
are $\Sp(\pi_v,m)$.
Similarly, let $\pi'$ be another irreducible cuspidal representation of $\GL_n(\A)$ and let $\phi'$ and $\varphi'$
be analogous functions with respect to $\pi'$.

Formally, we would have liked to consider the integral
\begin{equation} \label{eq: globint}
\int_{G(F)\bs G(\A)^1}\varphi(g)\varphi'(g)\Eisen_{\Phi,\omega}(g,s)\ dg
\end{equation}
where $\omega=\omega_{\pi}\omega_{\pi'}$.
For $m=1$, this is of course the classical Rankin--Selberg integral. Unfortunately, for $m>1$ this integral does not converge
as none of the functions that appear in the integrand is rapidly decreasing.
A suitable regularization is therefore needed in order to make sense of \eqref{eq: globint}.
We will not pursue this matter here. Instead, we will be content with a \emph{purely heuristic} argument,
anticipating what a possible regularization of \eqref{eq: globint} would yield.

\begin{comment}
Unfortunately the integral doesn't converge for any $s$.
At $s=0$ the worst exponent of $E_\Phi$ is $\lambda-\rho$ where
\[
\lambda=(\overbrace{\frac{m(n-1)}2}^m,\overbrace{-\frac m2}^{(n-1)m})
\]
Its projection to the parabolic of type $(\overbrace{n,\dots,n}^m)$ is (at least if $n\ge m$)
\[
\mu_1=({m\choose 2},\overbrace{-\frac m2}^{m-1})
\]
On the other hand, the exponent of the Speh representation is
$\mu_2=(-\frac{m-1}2,\dots,\frac{m-1}2)$.
We would need $2\mu_2+\mu_1$ to be negative which is not the case.
In the case $m=2$ it is $0$ which would suggest that we could replace $E_\Phi$ by the pseudo-Eisenstein series.
However, this is no longer the case for $m>2$. \Erez{Can we carry out the computation with a pseudo-Eisenstein series
instead of one of the Speh representations? What would that mean?}
\end{comment}

As in the case $m=1$ we unfold (formally) the expression \eqref{eq: globint}.
For any $i=1,\dots,m$ let $Q_i=L_i\ltimes V_i$ be the stabilizer of the flag
\[
(\spn\{e_{nj-k}:j=1,\dots,m,\ k=0,\dots,r-1\})_{r=1,\dots,i}
\]
in $G$. Thus, $Q_1=Q\supset Q_2\supset\dots\supset Q_{n-1}=Q_n=P'$ and $L_i\simeq\GL_{m(n-i)}\times L_i'$ with $L_i'\simeq\overbrace{\GL_m\times\dots\times\GL_m}^i$.
Let $p_i:Q_i\rightarrow L_i'$ be the resulting projection
and let $Q_i'$ be the inverse image of $\GL_m$ diagonally embedded in $L_i'$.
In particular, $Q_1'=Q_1=Q$ and $Q_n'=M'^{\fpsi}\ltimes U'$.
Note that for all $i=1,\dots,n-1$, $Q_{i+1}'$ is the stabilizer in $Q_i$ of the character $\fpsi_{V_i}$
and $V_i/V_{i-1}$ is abelian (and can be identified with $\Mat_{m,(n-i)m}$) where for consistency we let $V_0=0$.

In the first step we unfold \eqref{eq: globint} to write it as
\[
\int_{Q(F)\bs G(\A)^1}\varphi(g)\varphi'(g)f_{\Phi,\omega,s}(g)\ dg=
\int_{Q_1(F)\bs G(\A)^1}\int_{V_1(F)\bs V_1(\A)}\varphi(vg)\varphi'(vg)\ dv\ f_{\Phi,\omega,s}(g)\ dg
\]
and expand
\[
\int_{V_1(F)\bs V_1(\A)}\varphi(vg)\varphi'(vg)\ dv=
\sum_{\chi\in\PD(V_1(F)\bs V_1(\A))}\varphi^{V_1,\chi}(g)\varphi'^{V_1,\chi^{-1}}(g)
\]
where
\[
\varphi^{V_1,\chi}(g)=\int_{V_1(F)\bs V_1(\A)}\varphi(vg)\chi(v)^{-1}\ dv.
\]
The Pontryagin dual of the compact abelian group $V_1(F)\bs V_1(\A)$ is isomorphic to
$\Mat_{m,(n-1)m}(F)$.
We consider only the contribution from the non-degenerate $\chi$'s, i.e. those corresponding
to matrices of rank $m$ (anticipating that the degenerate ones will not contribute, either by the cuspidality of $\pi$
or by the regularization procedure itself).
The non-degenerate characters form a single orbit under $Q=Q_1$, namely the orbit of $\fpsi_{V_1}$,
and the stabilizer of $\fpsi_{V_1}$ is $Q_2'$.
We thus get
\[
\int_{Q_2'(F)\bs G(\A)^1}\varphi^{V_1,\fpsi_{V_1}}(g)\varphi'^{V_1,\fpsi_{V_1}^{-1}}(g)f_{\Phi,\omega,s}(g)\ dg
\]
which we write as
\[
\int_{Q_2'(F)\bs G(\A)^1}\int_{V_2(F)\bs V_2(\A)}\varphi^{V_1,\fpsi_{V_1}}(ug)\varphi'^{V_1,\fpsi_{V_1}^{-1}}(ug)\ du
\ f_{\Phi,\omega,s}(g)\ dg.
\]
Once again, we expand the inner integral according to characters of the compact abelian group $V_2(\A)/V_1(\A)V_2(F)$
and consider only the non-degenerate characters.
Continuing this way we get for $k=1,\dots,n$
\[
\int_{Q_k'(F)\bs G(\A)^1}\varphi^{V_{k-1},\fpsi_{V_{k-1}}}(g){\varphi'}^{V_{k-1},\fpsi_{V_{k-1}}^{-1}}(g)f_{\Phi,\omega,s}(g)\ dg.
\]
For $k=n$ we obtain
\[
\int_{M'^{\fpsi}(F)U'(\A)\bs G(\A)^1}\varphi^{U',\fpsi_{U'}}(g)\varphi'^{U',\fpsi_{U'}^{-1}}(g)f_{\Phi,\omega,s}(g)\ dg.
\]
Now, $\varphi^{U',\fpsi_{U'}}$ is $(M'^{\fpsi}(\A),\omega_\pi\circ\det)$-equivariant (taking into account the identification
$\iota:\GL_m\rightarrow M'^{\fpsi}$). Therefore, up to a volume factor we get
\begin{equation} \label{eq: finexp}
\int_{M'^{\fpsi}(\A)U'(\A)\bs G(\A)}\varphi^{U',\fpsi_{U'}}(g){\varphi'}^{U',\fpsi_{U'}^{-1}}(g)f_{\Phi,\omega,s}(g)\ dg.
\end{equation}
This integral (which actually converges for $\Re s>m$ if $\omega$ is unitary) is Eulerian.
%More precisely, assume that $\varphi$ is a factorizable vector in $\pi$.
%We may write
%\[
%\varphi^{U',\fpsi_{U'}}(g)=\prod_v(\typSh)_v(g_v),\ \ (\typSh)_v\in\Whit^{\fpsi_{U'}}(\pi_v).
%\]
%Similarly assume that $\varphi'$ is a factorizable vector in $\pi'$ and write
%\[
%\ \varphi'^{U',\fpsi_{U'}^{-1}}(g)=\prod_v(\typSh')_v(g_v)\ \ (\typSh')_v\in\Whit^{\fpsi_{U'}^{-1}}(\pi_v).
%\]
%Finally assume that $\Phi(X)=\prod_v\Phi_v(X_v)$, $X=(X_v)\in\Mat_{m,nm}(\A)$.
Let $S$ be a finite set of places of $F$ containing all the archimedean ones such that for all $v\notin S$
$\varphi$ and $\varphi'$ are $G(\OOO_v)$-invariant (and in particular, $\pi_v$ and $\pi'_v$ are unramified)
$\psi_v$ has conductor $\OOO_v$, $\Phi$ is invariant under translation by $\Mat_{m,mn}(\OOO_v)$
and $\Phi(X)=0$ unless $X_v\in\Mat_{m,mn}(\OOO_v)$.
Using \eqref{eq: Zbileq} and Corollary \ref{cor: unram} the integral \eqref{eq: finexp} is equal to
\[
\big(\prod_{i=0}^{m-1}L^S(s-i,\pi\times\pi')\big)Z_S(\varphi^{U',\fpsi_{U'}}\rest_{G(F_S)},
{\varphi'}^{U',\fpsi_{U'}^{-1}}\rest_{G(F_S)},\Phi\rest_{\Mat_{m,mn}(F_S)},s)
\]
where $L^S(s,\pi\times\pi')$ is the partial Rankin-Selberg $L$-function and
for any $\typSh\in\Whit^{\fpsi_{U'}}(\Sp(\pi_S,m))$ and $\typSh'\in\Whit^{\fpsi_{U'}^{-1}}(\Sp(\pi'_S,m))$
\[
Z_S(\typSh,\typSh',\Psi,s)=\int_{U'(F_S)\bs G'(F_S)}\typSh(g)\typSh'(g)\Phi(\eta g)\abs{\det g}^s\ dg,
\]
which is essentially the product over $v\in S$ of the integrals considered in \S\ref{sec: locint}.
(We tacitly assume that the analysis of sections \ref{sec: models}--\ref{sec: locint} carries over to the archimedean case.)

\appendix
\section{Relation to intertwining operators}  \label{sec: apend}
For this appendix assume that $\pi\in\Irr_{\AT}\GL_n$.
Let
\[
\Pi=\pi\abs{\cdot}^{\frac{m-1}2}\times\pi\abs{\cdot}^{\frac{m-3}2}\times\dots\times\pi\abs{\cdot}^{\frac{1-m}2}
\]
be the standard module which admits $\sigma=\Sp(\pi,m)$ as the Langlands quotient.
We realize $\Pi$ in the subspace $\Whit^{\fpsi_N}(\Pi)$ of $\Ind_N^G\fpsi_N$
consisting of functions $W$ such that $l\in M\mapsto\delta_P^{-\frac12}(l)\delta'^{-1}(l)W(lg)\in\Whit^{\fpsi_{N_M}}(\pi^{\otimes m})$
for all $g\in G$.
Define an intertwining operator on $\Whit^{\fpsi_N}(\Pi)$ by
\begin{equation}
\label{eq: defM}
W\mapsto MW(\cdot)=\int_{U}W(\ww u\cdot)\ du
\end{equation}
where $\ww$ is as in Remark \ref{rem: w0}.
The integral defining $MW$ is absolutely convergent and its image is $\Whit^{\fpsi_N}(\sigma)$.
Similarly, define $\Whit^{\fpsi_N^{-1}}(\Pi^\vee)\simeq\Pi^\vee$ to be the subspace of $\Ind_N^G\fpsi_N^{-1}$
consisting of functions $W^\vee$ such that
$l\in M\mapsto\delta_P^{-\frac12}(l)\delta'(l)W^\vee(lg)\in\Whit^{\fpsi_{N_M}^{-1}}((\pi^\vee)^{\otimes m})$
for all $g\in G$. Then the bilinear form
\[
\sprod{W}{W^\vee}=\int_{P\bs G}\int_{N_M\bs\mir_M}\delta_P(l)^{-1}W(lg)W^\vee(lg)\ dl\ dg,
\ \ \ W\in \Whit^{\fpsi_N}(\Pi),\ W^\vee\in \Whit^{\fpsi_N^{-1}}(\Pi^\vee)
\]
converges absolutely and defines a $G$-invariant pairing on $\Whit^{\fpsi_N}(\Pi)\times\Whit^{\fpsi_N^{-1}}(\Pi^\vee)$
where $\mir_M=\mir\cap M$ is the product of $m$ copies of the mirabolic subgroup of $\GL_n$.
Since $\Whit^{\fpsi_N^{-1}}(\sigma^\vee)$ is the socle of $\Whit^{\fpsi_N^{-1}}(\Pi^\vee)$, for any
$\typZe^\vee\in \Whit^{\fpsi_N^{-1}}(\sigma^\vee)$ the linear form $W\mapsto\sprod{W}{\typZe^\vee}$ factors through $MW$
and it is a scalar multiple (independently of $W$) of $\Bil_0(MW,\typZe^\vee)$.
In the rest of the appendix we prove the following identity.

\begin{proposition} \label{prop: appmain}
For any $W\in\Whit^{\fpsi_N}(\Pi)$ and $\typZe^\vee\in\Whit^{\fpsi_N^{-1}}(\sigma^\vee)$ we have
\begin{equation}\label{eq: innercomp}
\sprod{W}{\typZe^\vee}=\omega_\pi(-1)^{m\choose2}\Bil_0(MW,\typZe^\vee).
\end{equation}
\end{proposition}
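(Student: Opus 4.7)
The plan is to realize both sides of \eqref{eq: innercomp} as $G$-invariant bilinear pairings on $\Whit^{\fpsi_N}(\Pi)\times\Whit^{\fpsi_N^{-1}}(\sigma^\vee)$, invoke uniqueness to get proportionality, and then pin down the constant. For the left-hand side $\sprod{W}{\typZe^\vee}$: the inner integral over $N_M\bs\mir_M$ is an $m$-fold product of the classical Bernstein mirabolic pairings on $\Whit^{\fpsi_{N_n}}(\pi)\times\Whit^{\fpsi_{N_n}^{-1}}(\pi^\vee)$, each $\GL_n$-invariant by Bernstein's theorem \cite{MR748505}; integrating over $P\bs G$ then gives a $G$-invariant pairing on $\Pi\times\Pi^\vee$, and its restriction to $\sigma^\vee\subset\Pi^\vee$ (the socle) is $G$-invariant. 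For the right-hand side, $M\colon\Pi\to\sigma$ is $G$-equivariant by construction, and $\Bil_0$ is $G$-invariant on $\sigma\times\sigma^\vee$ by Proposition~\ref{prop: inv} (applicable since $\pi$ is \AT).

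Since $\pi$ is \AT, the standard module $\Pi$ has $\sigma$ as its unique irreducible quotient (the Langlands quotient), and $\sigma$ appears with multiplicity one in the Jordan--H\"older series of $\Pi$. Hence $\dim\Hom_G(\Pi,\sigma)=1$, so the space of $G$-invariant bilinear forms on $\Pi\times\sigma^\vee$ is one-dimensional, which forces
\[
\sprod{W}{\typZe^\vee}=c\cdot\Bil_0(MW,\typZe^\vee)
\]
for some nonzero constant $c=c(\pi,m)$ (nondegeneracy of both pairings on $\sigma\times\sigma^\vee$ gives $c\ne 0$).

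It remains to identify $c=\omega_\pi(-1)^{\binom{m}{2}}$. I would evaluate at a convenient pair of test vectors: take $\typZe^\vee$ whose restriction to $\mir$ is compactly supported modulo $N_\mir$ in a small neighborhood of $e$ (available via Corollary~\ref{C: restomir2}), so that the right-hand side localizes and reduces essentially to $MW(e)$ times an explicit model integral. For the left-hand side, unfold using the Iwasawa decomposition $G=P\bar U$ on the open cell (with $\bar U$ parametrizing $P\bs G$) together with the birational factorization $\mir\cong(P\cap\mir)(\bar U\cap\mir)$, and substitute $\bar u\leftrightarrow\ww u\ww^{-1}$ to match the $\int_U$ appearing in the definition of $MW$. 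The scalar $\omega_\pi(-1)^{\binom{m}{2}}$ emerges from commuting the block-reversal element $\ww=\iota(w_m)$ across the data, yielding a factor $\omega_\pi(\det w_m)=\omega_\pi((-1)^{\lfloor m/2\rfloor})=\omega_\pi(-1)^{\binom{m}{2}}$ via Lemma~\ref{lem: equicchar} (using $\binom{m}{2}\equiv\lfloor m/2\rfloor\pmod 2$). The main obstacle lies precisely here: keeping track of Haar-measure Jacobians under the conjugation $u\mapsto\ww u\ww^{-1}\in\bar U$, identifying the correct dense open subset of $\mir$ obtained from the factorization above, and transporting the Shalika-model equivariance of Lemma~\ref{lem: equicchar} back to the Zelevinsky setting via (the $\sigma^\vee$-analogue of) the transition map $\trns$ of Proposition~\ref{prop: trans2}.
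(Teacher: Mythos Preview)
Your strategy matches the paper's: both sides are $G$-invariant pairings on $\Pi\times\sigma^\vee$, the space of such pairings is one-dimensional (via $\dim\Hom_G(\Pi,\sigma)=1$), so proportionality is automatic and only the constant needs work. The paper even states this reduction explicitly.

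Where you and the paper diverge is in pinning down $c$. You propose localizing $\typZe^\vee$ near $e$ and then unfolding the left-hand side over $P\bs G\simeq\bar U$ with a substitution $\bar u\leftrightarrow\ww u\ww^{-1}$. The paper instead establishes the key identity (its Lemma~\ref{L: A1})
\[
\int_{\bar U_\mir}\typZe(\bar u)\ d\bar u=\omega_\pi(-1)^{\binom{m}{2}}\typZe(\ww)
\]
for any $\typZe$ in the Zelevinsky model, proved precisely by passing through the Shalika model and invoking the $M'^{\fpsi}$-equivariance of Lemma~\ref{lem: equicchar}---exactly the device you allude to in your final sentence. With this in hand, the paper does \emph{not} localize; rather it integrates both $\sprod{W}{\typZe^\vee(\cdot\bar v)}$ and $\Bil_0(MW,\typZe^\vee(\cdot\bar v))$ over $\bar v\in\bar U_\mir$ (as an iterated integral), choosing $W$ supported in the big cell $P\bar U$ and $\typZe^\vee\rest_\mir$ supported in $P\bar U\cap\mir$, and shows both equal $\int_{N_M\bs\mir_M}MW(\ww l)\typZe^\vee(\ww l)\delta_{\bar U_\mir}^{-1}(l)\ dl$ (Lemmas~\ref{L: A3} and~\ref{L: A4}). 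Your localization-at-$e$ approach is plausible but you have not explained how the left-hand side, which involves $W$ (not $MW$), produces $MW(e)$ after localization---one still needs to perform the $U$-integral defining $M$, and a naive Bruhat unfolding of $\sprod{\cdot}{\cdot}$ gives an integral over $\bar U$, not $U$. The paper's extra integration over $\bar U_\mir$ together with Lemma~\ref{L: A1} is what bridges this, and your proposal, while pointing toward the right equivariance, does not yet contain that bridge.
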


The identity will follow from a series of identities proved below.

For $i=1,\ldots,m-1$, let $U^i$ be the unipotent radical of the standard parabolic subgroup $P^i$ of $G$ of type $(in,n,n,\ldots,n)$.
Let $\bar U^i=\,^tU^i$ be its opposite.
\begin{lemma}\label{L: A1}
% The function $\typZe$ is compactly supported on $\mir\cap \bar U$ and we have \Mao{selfdual measure}
Let $\tau\in \Irr_{\gen}\GL_n$. Then for any $\typZe\in \Whit^{\fpsi_N}(\Sp(\tau,m))$ we have
\begin{equation}\label{eq: A1}
\int_{\mir\cap \bar U^i} \typZe(\bar u )\ d\bar u=\omega_\tau(\det\wi) \typZe(\iota(\wi))
\end{equation}
where the integrand on the left-hand side is compactly supported.
Here $\wi=\sm{}{w_{m-i}}{I_i}{}$ where $w_{m-i}$ is the longest Weyl element in $\GL_{m-i}$.
Thus,
\[
\int_{\mir\cap \bar U^i} \typZe(\bar u\bar v)\ d\bar u=\int_{\mir\cap \bar U^i} \typZe(\bar u )\ d\bar u
\]
for all $\bar v\in\bar U^i$.
In particular for $i=1$
\[
\int_{\bar U_\mir} \typZe(\bar u )\ d\bar u=\omega_\tau(-1)^{m\choose2} \typZe(\ww)
\]
where $\bar U_\mir=\bar U\cap\mir$.
\end{lemma}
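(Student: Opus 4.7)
My plan has three parts: verifying convergence, establishing a conjugation identity, and completing the proof via the inverse transition formula.

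First, I would check that the integrand is compactly supported by observing $\bar U^i \subseteq \bar N$: in the block structure of $\bar U^i$ every off-diagonal block lies strictly below the block-diagonal with block-offset at least $n$, while intra-block positions span at most $n - 1$. Hence every non-identity entry of $\bar u \in \bar U^i$ lies strictly below the main diagonal of $G$, so $\mir \cap \bar U^i \subseteq \mir \cap \bar N$, and Lemma~\ref{L: support} gives the required compact support.

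The crucial step is the following conjugation identity. For every $\bar v \in \bar U^i$, set $n_{\bar v} := \iota(\hat w_i)\bar v\iota(\hat w_i)^{-1}$; I claim $n_{\bar v} \in N$ and $\fpsi_N(n_{\bar v}) = 1$. Since each $n \times n$ block of $\iota(\hat w_i)$ is $(\hat w_i)_{a,b} I_n$, conjugation permutes the $m \times m$ block positions via $(a, b) \mapsto (\hat w_i^{-1}(a), \hat w_i^{-1}(b))$ without altering intra-block entries. Using the explicit rule $\hat w_i(j) = m - i + j$ for $j \le i$ and $\hat w_i(j) = m - j + 1$ for $j > i$, a case analysis shows that whenever $a > b$ the source position $(\hat w_i^{-1}(a), \hat w_i^{-1}(b))$ either lies strictly above the block-diagonal of $\bar v$ (hence is zero) or lies inside the top-left $i \times i$ sub-block array where $\bar U^i$ is identically the identity (hence is again zero); thus $n_{\bar v} \in N$. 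Moreover, the only super-diagonal entries of $n_{\bar v}$ that can be nonzero lie at boundary positions $(kn, kn + 1)$, precisely those excluded from $\fpsi_N$. Hence $\typZe(\iota(\hat w_i)\bar v) = \fpsi_N(n_{\bar v})\typZe(\iota(\hat w_i)) = \typZe(\iota(\hat w_i))$ for every $\bar v \in \bar U^i$. In particular the ``Thus'' sentence of the lemma follows from the main identity applied to $\rho(\bar v)\typZe \in \Whit^{\fpsi_N}(\sigma)$, and the final assertion is the case $i = 1$ via $\hat w_1 = w_m$, $\iota(w_m) = \ww$, and $\det w_m = (-1)^{\binom{m}{2}}$.

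For the main identity I would substitute the inverse transition formula
\[
\typZe(g) = \int_{N \cap U' \bs N_\mir}\typSh(ug)\fpsi_N(u)^{-1}\,du
\]
of Lemma~\ref{L: support2} into both sides. Combined with the $M'^\fpsi$-equivariance $\typSh(\iota(\hat w_i)h) = \omega_\tau(\det\hat w_i)\typSh(h)$ from Lemma~\ref{lem: equicchar} and the conjugation identity above, the two sides reduce to the same iterated integral of $\typSh$ once orders of integration are exchanged (justified by the compact support established in the first step). The hard part will be the measure-theoretic bookkeeping: identifying $\mir \cap \bar U^i$ with the appropriate slice of $N_\mir$ that is dual, in the Fourier-inversion sense underlying Proposition~\ref{prop: trans2}, to $\typSh$'s translation by $\iota(\hat w_i)$, and checking that the Haar measures of \S\ref{sec: Haar} match with unit Jacobian under this identification.
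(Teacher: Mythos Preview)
Your first two steps are correct and cleanly argued: $\mir\cap\bar U^i\subset\mir\cap\bar N$ gives compact support via Lemma~\ref{L: support}, and your block-permutation analysis showing $\iota(\hat w_i)\bar v\iota(\hat w_i)^{-1}\in N$ with $\fpsi_N$-value $1$ is exactly what the paper uses to deduce the ``Thus'' assertion from the main identity.

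The gap is in your third step. The plan to insert the full inverse formula of Lemma~\ref{L: support2} on both sides and then match them by ``identifying $\mir\cap\bar U^i$ with the appropriate slice of $N_\mir$ dual in the Fourier-inversion sense'' does not correspond to an actual identity: the domain $N\cap U'\bs N_\mir$ is not normalized by $\mir\cap\bar U^i$, and there is no Fourier duality between these two groups of the kind underlying Proposition~\ref{prop: trans2}. What makes the argument go through is a structural factorization you have not identified. The paper passes through the \emph{intermediate} model $\Whit^{\fpsi_{U_{(i-1)n}}}$: writing $\typZe$ via the inverse of the partial transform $\trns_{(i-1)n-1}\circ\cdots\circ\trns_0$ expresses it as an integral of $\typSh^i$ over variables lying entirely inside $\GL_{in}\subset L^i$, and these variables normalize $\mir\cap\bar U^i$ with trivial modulus, so the swap of integration order is a genuine change of variables. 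Then the point is that the sub-integral over $U'\cap\bar U^i\subset\mir\cap\bar U^i$ is \emph{precisely} the remaining forward transform $\trns_{(m-1)n-1}\circ\cdots\circ\trns_{(i-1)n}$ taking $\typSh^i$ to $\typSh$. Only after this do the $M'^\fpsi$-equivariance and your conjugation identity enter, converting $\iota(\hat w_i)(\mir\cap\bar U^i)$ back into part of $N_\mir$ so that Lemma~\ref{L: support2} reassembles $\typZe(\iota(\hat w_i))$. The choice of level $(i-1)n$ is what makes the two halves of $\trns$ line up with the Levi decomposition of $P^i$; working directly with the full Shalika inverse obscures this and leaves you with a domain that does not decompose cleanly.
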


\begin{proof}
Let $\typSh^i=\trns_{(i-1)n}\typZe\in\Whit^{\fpsi_{U_{(i-1)n}}}(\Sp(\tau,m))$.
Recall that $U_{(i-1)n}$ is the subgroup of $P^i$ consisting of matrices whose $n\times n$ blocks $A_{j,k}$ satisfy
\begin{itemize}
\item $A_{j,j}$ is upper unitriangular for all $j=1,\dots,m$,
\item $A_{j,k}$ is strictly upper triangular if $j\neq k$ and $j,k\leq i$,
\item $A_{j,k}=0$ if $j>k$ and $j>i$.
\end{itemize}
(There are no conditions on $A_{j,k}$ if $k>j$ and $k>i$.)

%the mixed model that lies in $\Ind \typSh(\Sp(\pi,m-1)\abs{\cdot}^{-\frac12})\otimes W(\pi\abs{\cdot}^{\frac{m-1}{2}})$.
The inverse transform in Proposition~\ref{prop: trans2} gives
\[
\typZe(g)=\int_{N\cap U_{(i-1)n}\bs N_\mir} \typSh^i(u g )\ du.
\]
We may replace the domain of integration by $( N\cap U_{(i-1)n}\cap\GL_{in})\bs ( N_\mir\cap\GL_{in}) $
where $\GL_{in}$ is embedded in $G$ by $h\mapsto \sm{h}{}{}{I_{(m-i)n}}$.
Let $U_i'= U_{(i-1)n}\cap\GL_{in}=U'\cap \GL_{in}$ and $\mir_i=\mir\cap\GL_{in}$.
Thus, the above integral can be taken over $N\cap U'_i\bs N\cap\mir_i $, and by Lemma~\ref{L: support2} the integrand is compactly supported.

The expression on the left-hand side of \eqref{eq: A1} is
\[
\int_{\mir\cap \bar U^i} \int_{N\cap U'_i\bs N\cap\mir_i } \typSh^i(u \bar u )\ du\ d\bar u.
\]
The same argument as in Lemma~\ref{L: support} shows the function $\typSh^i(u \bar u)$ is compactly supported in $\bar u$ uniformly in $u$.
Thus, the above double integral is absolutely convergent.
Changing the order of integration and making a change of variable in $\bar u$ we get
\[
\int_{N\cap U'_i\bs N\cap\mir_i } \int_{\mir\cap \bar U^i}  \typSh^i( \bar u u )\ d\bar u\ du.
\]
Notice that the partial integration over $U'\cap \bar U^i\subset \mir\cap \bar U^i$ is the composition of the transforms $\trns_j$ defined in
Proposition~\ref{prop: trans2} for $j=(i-1)n,\ldots,(m-1)n-1$. Thus, the above is
\[
\int_{N\cap U'_i\bs N\cap\mir_i } \int_{U'\cap \bar U^i\bs\mir\cap \bar U^i}  \typSh( \bar u u )\ d\bar u\ du
\]
where $\typSh=\trns\typZe$.
By Lemma \ref{lem: equicchar} $\typSh(\iota(\wi) g)=\omega_\tau(\det(\wi))\typSh(g)$. The above becomes
\begin{multline*}
\omega_\tau(\det(\wi))\int_{N\cap U'_i\bs N\cap\mir_i } \int_{U'\cap \bar U^i\bs\mir\cap \bar U^i}
 \typSh( \iota(\wi)\bar u u )\ d\bar u\ du
\\=\omega_\tau(\det(\wi))\int_{N\cap U'\bs N_\mir}   \typSh( u \iota(\wi))\ du.\end{multline*}
Now Lemma \ref{L: support2} gives \eqref{eq: A1}.
For the second part, we only need to note that for all $\bar v\in \bar U^i$
we have $\typZe(\iota(\wi)\bar v)=\typZe(\iota(\wi))$.
\end{proof}

Write $\bar U$ as a (semidirect) product of abelian groups $\bar U_2\bar U_3\ldots \bar U_m$, where $\bar U_i$ consists of
the elements $\bar u$ in $\bar U$ such that $\bar u_{j,k}=\delta_{j,k}$ if $j\leq n(i-1)$ or $j>ni$.
For brevity, for any $i=1,\dots,m$ we denote the iterated integral
\[
\int_{\bar U_2\cap\mir}\big(\int_{\bar U_3\cap\mir}\cdots\big(\int_{\bar U_i\cap\mir}
f(\bar u_i\cdots\bar u_3\bar u_2)\ d\bar u_i\big)\cdots\ d\bar u_3\big)\ d\bar u_2
\]
(assuming convergent) by
\[
\itint_{\bar U\cap\mir_i}f(\bar u)\ du.
\]

\begin{lemma} \label{L: A2}
Let $\typZe$ be as before and let $\phi\in\swrz(\bar U)$.
Then
\[
\itint_{\bar U_\mir} \int_{\bar U}\phi(\bar u)\typZe(\bar u \bar v)\ d\bar u \ d\bar v=
\omega_\tau(-1)^{m\choose 2} \typZe(\ww)\int_{\bar U}\phi(\bar u)\ du
\]
where each integrand in the iterated integral on the left-hand side is compactly supported.
\end{lemma}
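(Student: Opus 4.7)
The plan is to reduce Lemma~\ref{L: A2} to Lemma~\ref{L: A1} by a change of variables in the innermost integral, an interchange of integration order justified by compact support, and a reparametrization of $\bar U$ that matches the orientation required by Lemma~\ref{L: A1}. Specifically, let $\bar U^\circ$ denote the abelian subgroup of $\bar U$ whose non-trivial entries lie only in rows $n, 2n, \ldots, mn$, so that $\bar U = \bar U^\circ \cdot (\bar U \cap \mir) = (\bar U \cap \mir) \cdot \bar U^\circ$ as algebraic varieties with Jacobian one.

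First, I would substitute $\bar u \mapsto \bar u \bar v^{-1}$ in the inner integral to obtain
\[
\text{LHS} = \itint_{\bar U_\mir} \int_{\bar U} \phi(\bar u \bar v^{-1}) \typZe(\bar u)\ d\bar u\ d\bar v.
\]
Since $\phi \in \swrz(\bar U)$ decays rapidly, while the compact support of $\typZe\rest_{\bar U \cap \mir}$ (from Lemma~\ref{L: A1}) controls the relevant behavior, an argument parallel to Lemma~\ref{L: support} shows that each intermediate iterated integrand is compactly supported. This both justifies swapping the iterated $\bar v$-integration past the inner $\int_{\bar U}$ and converts the $\bar v$-integral into an absolutely convergent Schwartz integral over $\bar U \cap \mir$.

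Next, using $\bar U = \bar U^\circ \cdot (\bar U \cap \mir)$, I would write $\bar u = \bar u^\circ \bar u^*$ and invoke the right-translation invariance of Haar measure on $\bar U \cap \mir$ to identify the $\bar v$-integral with $\Psi(\bar u^\circ) := \int_{\bar U \cap \mir} \phi(\bar u^\circ \bar v)\ d\bar v$, which depends only on $\bar u^\circ$. This yields
\[
\text{LHS} = \int_{\bar U^\circ} \Psi(\bar u^\circ) \Bigl( \int_{\bar U \cap \mir} \typZe(\bar u^\circ \bar u^*)\ d\bar u^* \Bigr)\ d\bar u^\circ.
\]

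The hard part is evaluating the bracketed integral: its integration variable $\bar u^*$ sits on the \emph{right} of the fixed $\bar u^\circ$, whereas the ``Thus'' clause of Lemma~\ref{L: A1} controls only integrals in which the variable lies on the \emph{left}. To bridge this, I would re-factorize $\bar u^\circ \bar u^* = \bar u^{*\sharp} \bar u^{\circ\sharp}$ using $\bar U = (\bar U \cap \mir) \cdot \bar U^\circ$, and verify --- by comparing the two Haar-measure parametrizations of $\bar U$ --- that for each fixed $\bar u^\circ$ the map $\bar u^* \mapsto \bar u^{*\sharp}$ is a measure-preserving bijection of $\bar U \cap \mir$. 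The remaining subtlety is that the companion factor $\bar u^{\circ\sharp}$ also varies with $\bar u^*$; handling this cleanly likely requires an iterated application of Lemma~\ref{L: A1} at the intermediate values of $i$, or an induction on $m$ that peels off the iterated factors $\bar u_j$ one at a time, using that conjugation by $\bar U_k \cap \mir$ preserves $\bar U_j \cap \mir$ for $k < j$. Granting this, Lemma~\ref{L: A1} gives the bracketed integral the value $\omega_\tau(-1)^{m\choose 2} \typZe(\ww)$, independent of $\bar u^\circ$, and the identity $\int_{\bar U^\circ} \Psi(\bar u^\circ)\ d\bar u^\circ = \int_{\bar U} \phi(\bar u)\ d\bar u$ then completes the proof.
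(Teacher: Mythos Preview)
Your substitution $\bar u\mapsto\bar u\bar v^{-1}$ and the $\bar U^\circ\cdot\bar U_\mir$ factorization are detours that do not sidestep the core difficulty. As you yourself acknowledge, the hard part is the bracketed integral $\int_{\bar U_\mir}\typZe(\bar u^\circ\bar u^*)\,d\bar u^*$ with the variable on the \emph{right}, and the fix you propose --- an iterated application of Lemma~\ref{L: A1} at the intermediate indices, or an induction peeling off one $\bar U_j$ at a time --- \emph{is} the entire content of the proof; you have postponed it rather than reduced it. Worse, your preliminary steps introduce their own gaps: the swap of the iterated $\bar v$-integral past $\int_{\bar U}$ does not follow from ``compact support at each step'' (that does not imply joint absolute integrability, and Lemma~\ref{L: support} says nothing about $\typZe$ on $\bar U\setminus\mir$), and even the convergence of $\int_{\bar U_\mir}\typZe(\bar u^\circ\bar u^*)\,d\bar u^*$ for $\bar u^\circ\ne e$ is unclear, since $\bar u^\circ\bar U_\mir\not\subset\mir\cap\bar N$.

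The paper runs the induction directly on the original expression. It shows by descending induction on $i=m,\dots,1$ that the left-hand side equals
\[
\itint_{\bar U\cap\mir_i}\int_{\bar U^i\cap\mir}\int_{\bar U}\phi(\bar u)\typZe(\bar v\,\bar u\,\bar v')\,d\bar u\,d\bar v\,d\bar v',
\]
where $\mir_i=\mir\cap\GL_{in}$. For $i=m$ this is the original expression; for $i=1$ the outer iterated integral is empty and the ``Thus'' clause of Lemma~\ref{L: A1} (case $i=1$) finishes. For the induction step one applies Lemma~\ref{L: A1} at index $i$ to replace $\bar u$ by its projection $p_i(\bar u)\in\GL_{in}\cap\bar U$; since $p_i(\bar u)$ normalizes $\bar U_i\cap\mir$, the innermost layer $\bar v_i$ of $\bar v'$ commutes past it and is absorbed into the $\bar U^{i-1}\cap\mir$-integral on the left, after which Lemma~\ref{L: A1} restores $\bar u$. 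This is exactly the mechanism you gesture toward, carried out without the substitution and without needing to control $\typZe$ off $\mir\cap\bar N$.
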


\begin{proof}
We show by descending induction on $i=1,\dots,m$ that the left-hand side is equal to
\begin{equation} \label{eq: istep}
\itint_{\bar U\cap\mir_i}\int_{\bar U^i\cap\mir}\int_{\bar U}\phi(\bar u)\typZe(\bar v\bar u\bar v')\ d\bar u
\ d\bar v\ d\bar v'.
\end{equation}
Note that the integrand is compactly supported in $\bar u$ and $\bar v$.
For $i=m$ this is clear while for $i=1$ we obtain the statement of the lemma by Lemma \ref{L: A1}.

For the induction step, we assume $i>1$ and use Lemma \ref{L: A1} to rewrite \eqref{eq: istep} as
\[
\itint_{\bar U\cap\mir_i}\int_{\bar U^i\cap\mir}\int_{\bar U}\phi(\bar u)\typZe(\bar vp_i(\bar u)\bar v')\ d\bar u
\ d\bar v\ d\bar v'
\]
where $p_i:\GL_{in}\ltimes\bar U^i\rightarrow\GL_{in}$ is the projection.
Now write $\bar v'=\bar v_1\bar v_2$ where $\bar v_1\in\bar U_i\cap\mir$ and $\bar v_2\in\bar U\cap\mir_{i-1}$
and note that $p_i(\bar u)\in\GL_{in}\cap\bar U$ normalizes $\bar U_i\cap\mir$. Therefore,  \eqref{eq: istep} is equal to
\begin{multline*}
\itint_{\bar U\cap\mir_{i-1}}\int_{\bar U_i\cap\mir}\int_{\bar U^i\cap\mir}\int_{\bar U}
\phi(\bar u)\typZe(\bar v\bar v_1p_i(\bar u)\bar v_2)\ d\bar u\ d\bar v\ d\bar v_1\ d\bar v_2
\\=\itint_{\bar U\cap\mir_{i-1}}\int_{\bar U^{i-1}\cap\mir}\int_{\bar U}
\phi(\bar u)\typZe(\bar vp_i(\bar u)\bar v')\ d\bar u\ d\bar v\ d\bar v'
\\=\itint_{\bar U\cap\mir_{i-1}}\int_{\bar U^{i-1}\cap\mir}\int_{\bar U}
\phi(\bar u)\typZe(\bar v\bar u\bar v')\ d\bar u\ d\bar v\ d\bar v'
\end{multline*}
as required.
\end{proof}

Denote by $\delta_{\bar U_\mir}(l)$ the character of $\mir_M$ given by $d(l\bar u l^{-1})=\delta_{\bar U_\mir}(l)\ d\bar u$
where $d\bar u$ is a Haar measure on $\bar U_\mir$.

Let $\Whit^{\fpsi_N}(\Pi)_\sharp$ be the linear subspace of $\Whit^{\fpsi_N}(\Pi)$ generated by the functions $W$ of the form
\[
W(g)=\begin{cases}\delta_P(l)^{\frac12}\delta'(l)W'(l)\phi(\bar u)&\text{if }g=ul\bar u,\ u\in U,\ l\in M,\ \bar u\in\bar U,\\
0&\text{otherwise,}\end{cases}
\]
where $\phi\in\swrz(\bar U)$, $W'\in\Whit^{\fpsi_{N_M}}(\pi^{\otimes m})$ and $W'\rest_{\mir_M}$ is compactly supported modulo $N_M$.

\begin{lemma} \label{L: A3}
For any $W\in \Whit^{\fpsi_N}(\Pi)_\sharp$ and $\typZe^\vee\in\Whit^{\fpsi_N^{-1}}(\sigma^\vee)$ we have
\[
\itint_{\bar U_\mir} \sprod{W}{\typZe^\vee(\cdot \bar v)} \ d\bar v= \omega_\pi(-1)^{m\choose2}
\int_{N_M\bs\mir_M} MW(\ww l) \typZe^\vee(\ww l)\delta_{\bar U_\mir}^{-1}(l)\ dl.
\]
\end{lemma}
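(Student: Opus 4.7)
The plan is to compute both sides explicitly for $W\in\Whit^{\fpsi_N}(\Pi)_\sharp$ and to match them using Lemma~\ref{L: A2}. Since such a $W$ is supported on the open dense big cell $UM\bar U$ with $W(ul\bar u)=\delta_P(l)^{1/2}\delta'(l)W'(l)\phi(\bar u)$, parametrising $P\bs G$ by $\bar U$ lets me unfold the pairing as
\[
\sprod{W}{\typZe^\vee(\cdot\bar v)}=\int_{\bar U}\int_{N_M\bs\mir_M}\delta_P(l)^{-\frac12}\delta'(l)W'(l)\phi(\bar u)\typZe^\vee(l\bar u\bar v)\ dl\ d\bar u.
\]

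I then apply $\itint_{\bar U_\mir}d\bar v$ and interchange orders so that the $\bar u$- and $\bar v$-integrations sit together. Writing $l\bar u\bar v=(l\bar u l^{-1})(l\bar v l^{-1})l$ and substituting $\bar u\mapsto l^{-1}\bar u l$, $\bar v\mapsto l^{-1}\bar v l$ (which preserve $\bar U$ and each $\bar U_j\cap\mir$ since $l\in\mir_M$ normalises each), the conjugation Jacobians $\delta_P(l)^{-1}$ on $\bar U$ and $\delta_{\bar U_\mir}(l)$ on $\bar U_\mir$ combine to produce an overall factor $\delta_P(l)\delta_{\bar U_\mir}(l)^{-1}$ multiplying
\[
\itint_{\bar U_\mir}\int_{\bar U}\phi(l^{-1}\bar u l)\typZe^\vee(\bar u\bar v l)\ d\bar u\ d\bar v.
\]
The analogue of Lemma~\ref{L: A2} for $\typZe^\vee\in\Whit^{\fpsi_N^{-1}}(\sigma^\vee)$ (the proof is symmetric, and $\omega_{\pi^\vee}(-1)=\omega_\pi(-1)$ because $\omega_\pi(-1)^2=1$), applied with $\phi$ replaced by $\phi(l^{-1}\cdot l)$ and evaluated at argument $l$, rewrites this as $\omega_\pi(-1)^{m\choose 2}\typZe^\vee(\ww l)\int_{\bar U}\phi(l^{-1}\bar u l)\,d\bar u=\omega_\pi(-1)^{m\choose 2}\typZe^\vee(\ww l)\delta_P(l)^{-1}\int_{\bar U}\phi(\bar u)\,d\bar u$. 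The $\delta_P(l)^{\pm 1}$ factors cancel, leaving only $\delta_{\bar U_\mir}(l)^{-1}$ on the outside.

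It remains to recognise the coefficient $\delta_P(l)^{-1/2}\delta'(l)W'(l)\int_{\bar U}\phi(\bar u)\,d\bar u$ as $MW(\ww l)$. Since $\ww^2=I$ and conjugation by $\ww$ swaps $U$ with $\bar U$ (it merely permutes block positions via $w_m$), we have $\ww u\ww=\ww u\ww^{-1}\in\bar U$, and the substitution $u\mapsto\ww u\ww$ (which has Jacobian one) gives
\[
MW(\ww l)=\int_U W(\ww u\ww l)\ du=\int_{\bar U}W(\bar u l)\ d\bar u.
\]
Using the decomposition $\bar u l=1\cdot l\cdot (l^{-1}\bar u l)$ in $UM\bar U$ and the explicit formula for $W$, this equals $\delta_P(l)^{1/2}\delta'(l)W'(l)\int_{\bar U}\phi(l^{-1}\bar u l)\,d\bar u=\delta_P(l)^{-1/2}\delta'(l)W'(l)\int_{\bar U}\phi(\bar u)\,d\bar u$, completing the matching. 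The main technical point will be to justify the interchanges of integration order throughout; this should rest on the Schwartz decay of $\phi$, the hypothesis that $W'\rest_{\mir_M}$ is compactly supported modulo $N_M$, and the compact supports of integrands at each stage of the iterated integral provided by Lemma~\ref{L: A1}.
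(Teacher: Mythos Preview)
Your proof is correct and follows essentially the same route as the paper's: unfold the pairing over the big cell, conjugate $l$ past $\bar u$ and $\bar v$ to pick up the $\delta_P$ and $\delta_{\bar U_\mir}^{-1}$ factors, apply Lemma~\ref{L: A2} to $\typZe^\vee(\cdot\,l)$, and identify $\int_{\bar U}W(\bar u l)\,d\bar u$ with $MW(\ww l)$ via \eqref{eq: defM}. The only cosmetic difference is that you expand $W$ into $W'$ and $\phi$ from the outset and perform both substitutions simultaneously, whereas the paper keeps $W(\bar u l)$ intact and does the two changes of variable sequentially; your justification of the interchange of integration (compact support of $W'\rest_{\mir_M}$ modulo $N_M$, Schwartz $\phi$, and the compact-support assertions in Lemmas~\ref{L: A1}--\ref{L: A2}) matches the paper's.
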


\begin{proof}
The left-hand side is
\begin{multline*}
\itint_{\bar U_\mir}\int_{\bar U}\int_{N_M\bs\mir_M}\delta_P(l)^{-1}W(l\bar u)\typZe^\vee(l\bar u\bar v)\ dl\ d\bar u\ d\bar v\\=
\itint_{\bar U_\mir}\int_{N_M\bs\mir_M}\int_{\bar U}W(\bar ul)\typZe^\vee(\bar ul\bar v)\ d\bar u\ dl\ d\bar v.
\end{multline*}
%The integral is absolutely convergent by our choice of $W$ and Lemma~\ref{L: A2}.
As $M$ normalizes $\bar U_i$ for any $i$, this equals
\[
\int_{N_M\bs\mir_M}\itint_{\bar U_\mir}\int_{\bar U} W(\bar u l)\typZe^\vee(\bar u\bar v l)
\delta_{\bar U_\mir}^{-1}(l)\ d\bar u\ d\bar v\ dl.
\]
Here we can interchange the order of integration as $l$ is integrated over a fixed compact set.
The claim now follows from Lemma~\ref{L: A2} and \eqref{eq: defM}.
\end{proof}

Let $\Whit^{\fpsi_N^{-1}}(\sigma^\vee)_\flat$ be the subspace of $\Whit^{\fpsi_N^{-1}}(\sigma^\vee)$
consisting of the functions $\typZe^\vee$ such that $\typZe^\vee\rest_\mir$
is contained in $\ind_{N_\mir}^\mir\fpsi_N$ and $\typZe^\vee\rest_\mir$ is supported in $P\bar U\cap\mir$.

\begin{lemma} \label{L: A4}
For any $W\in\Whit^{\fpsi_N}(\Pi)$ and $\typZe^\vee\in\Whit^{\fpsi_N^{-1}}(\sigma^\vee)_\flat$ we have
\[
\int_{\bar U_\mir} \Bil_0(MW,\typZe^\vee(\cdot\bar v)) \ d\bar v= \int_{N_M\bs\mir_M} MW(\ww l)
\typZe^\vee(\ww l) \delta_{\bar U_\mir}^{-1}(l)\ dl.
\]
\end{lemma}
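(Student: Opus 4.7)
The plan is to unfold the left-hand side using the support condition on $\typZe^\vee$ and a Bruhat-type factorization of $\mir$, then apply Lemma \ref{L: A1} (and its $\fpsi^{-1}$-analog) to both $MW$ and $\typZe^\vee$. The final identity reduces to a bookkeeping of modular characters.

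First, substituting $g = h\bar v^{-1}$ in $\Bil_0(MW,\typZe^\vee(\cdot\bar v)) = \int_{N_\mir\bs\mir} MW(g)\typZe^\vee(g\bar v)\,dg$ (valid since $\bar v\in\mir$ and the quotient measure is right $\mir$-invariant), swapping by Fubini, and replacing $\bar v$ by $\bar v^{-1}$, the LHS becomes
\[
\int_{N_\mir\bs\mir}\typZe^\vee(h)\Big(\int_{\bar U_\mir} MW(h\bar v)\,d\bar v\Big)\,dh.
\]
A direct row-by-row computation (propagating the constraints on the $jn$-th rows) yields the factorization $\mir\cap P\bar U = U_\mir\cdot\mir_M\cdot\bar U_\mir$, and from right-$\mir_M$-invariance of the Haar measure together with the definition of $\delta_{\bar U_\mir}$, the quotient is $N_\mir\bs(\mir\cap P\bar U)\simeq (N_M\bs\mir_M)\times\bar U_\mir$ with measure proportional to $\delta_{\bar U_\mir}(l)\,dl\,d\bar u$. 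By the support hypothesis on $\typZe^\vee$, the outer integral restricts to this parametrization.

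For $h = ul\bar u$, the triviality of $\fpsi_N$ on $U$ gives $\typZe^\vee(ul\bar u) = \typZe^\vee(l\bar u)$ and $MW(ul\bar u\bar v) = MW(l\bar u\bar v)$; the substitution $\bar v\mapsto\bar u^{-1}\bar v$ (valid since $\bar u\in\bar U_\mir$) then collapses the inner integral to $\int_{\bar U_\mir} MW(l\bar v)\,d\bar v$, which depends only on $l$. To evaluate this and the remaining $\bar u$-integration of $\typZe^\vee(l\bar u)$, I would apply Lemma \ref{L: A1} to the right translate $MW(\cdot l)\in\Whit^{\fpsi_N}(\sigma)$ and, via the $\fpsi^{-1}$-version obtained by repeating the proof with $\psi\to\psi^{-1}$, to $\typZe^\vee(\cdot l)\in\Whit^{\fpsi_N^{-1}}(\sigma^\vee)$, yielding
\[
\int_{\bar U_\mir} MW(\bar v l)\,d\bar v = \omega_\pi(-1)^{m\choose 2} MW(\ww l),\quad \int_{\bar U_\mir}\typZe^\vee(\bar u l)\,d\bar u = \omega_{\pi^\vee}(-1)^{m\choose 2}\typZe^\vee(\ww l).
\]
The further change of variable $\bar v\mapsto l\bar v l^{-1}$ converts $\bar v l$ to $l\bar v$ at the cost of a factor $\delta_{\bar U_\mir}(l)^{-1}$ in each case, by the very definition of $\delta_{\bar U_\mir}$.

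Assembling: two factors of $\delta_{\bar U_\mir}(l)^{-1}$ combine with the Jacobian $\delta_{\bar U_\mir}(l)$ from the Bruhat measure to yield $\delta_{\bar U_\mir}(l)^{-1}$, while $\omega_\pi(-1)^{m\choose 2}\omega_{\pi^\vee}(-1)^{m\choose 2} = \omega_\pi(-1)^{2{m\choose 2}} = 1$ because $\omega_\pi(-1)\in\{\pm 1\}$. This produces exactly the RHS. The main obstacle is the measure-theoretic bookkeeping: verifying that the Jacobian of the decomposition $\mir = U_\mir\mir_M\bar U_\mir$ is precisely $\delta_{\bar U_\mir}$, so that the threefold cancellation is exact. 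This compatibility reflects the fact that $\mir_M$ acts on $U_\mir$ and on $\bar U_\mir$ by conjugation with inverse modular characters, and is ultimately the structural reason why $\Bil_0$ matches $\sprod{\cdot}{\cdot}$ via the intertwiner $M$.
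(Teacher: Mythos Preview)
Your proof is correct and follows essentially the same route as the paper's: restrict to the open cell $\mir\cap P\bar U=U_\mir\mir_M\bar U_\mir$ using the $\flat$-support hypothesis, separate the two $\bar U_\mir$-integrals via a change of variable, and apply Lemma~\ref{L: A1} (and its $\psi^{-1}$-analogue) to each factor. The only differences are cosmetic: you first substitute $g=h\bar v^{-1}$ to isolate $\typZe^\vee(h)$ and then parameterize $h=l\bar u$ (incurring the Jacobian $\delta_{\bar U_\mir}(l)$, later cancelled against two conjugation factors), whereas the paper keeps $p=\bar u l$ with no Jacobian and performs a single combined change of variable $\bar v\mapsto l^{-1}\bar u^{-1}\bar v l$ that produces $\delta_{\bar U_\mir}(l)^{-1}$ directly; either way the bookkeeping yields the same $\delta_{\bar U_\mir}(l)^{-1}$ and the sign factors $\omega_\pi(-1)^{\binom m2}\omega_{\pi^\vee}(-1)^{\binom m2}=1$ cancel.
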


\begin{proof}
Note that $P\bar U\cap\mir=U_\mir\mir_M\bar U_\mir$.
Thus, the left-hand side is
\begin{multline}\label{eq: A5}
\int_{\bar U_\mir} \int_{N_\mir\bs\mir}MW(p )\typZe^\vee( p\bar v) \ dp \ d\bar v
\\=\int_{\bar U_\mir}\int_{N_M\bs\mir_M}\int_{\bar U_\mir} MW(\bar u l )\typZe^\vee( \bar u l\bar v)\ d\bar u \ dl\ d\bar v
\\=\int_{N_M\bs\mir_M}\int_{\bar U_\mir}\int_{\bar U_\mir} MW(\bar u l )\typZe^\vee( \bar u l\bar v)\ d\bar v\ d\bar u \ dl
\\=\int_{N_M\bs\mir_M}\int_{\bar U_\mir}\int_{\bar U_\mir} MW(\bar u l)\typZe^\vee( \bar v l)
\delta_{\bar U_\mir}^{-1}(l)\ d\bar v \ d\bar u\ dl
\end{multline}
where we made a change of variable $\bar v\mapsto l^{-1}\bar u^{-1}\bar v l$.
By the condition on $\typZe^\vee\rest_\mir$ and Lemma~\ref{L: support}, the integrand is compactly supported,
which justifies the previous steps.
Applying Lemma~\ref{L: A1} for both integrals over $\bar U_\mir$ we get the required statement.
\end{proof}

Since \eqref{eq: innercomp} holds up to a scalar, in order to conclude Proposition \ref{prop: appmain},
it suffices, in view of Lemmas \ref{L: A3} and \ref{L: A4}, to show the existence of
$W\in \Whit^{\fpsi_N}(\Pi)_\sharp$ and $\typZe^\vee\in\Whit^{\fpsi_N^{-1}}(\sigma^\vee)_\flat$ such that
the right-hand side of the expression in Lemma~\ref{L: A3}, or equivalently, the expression \eqref{eq: A5}, is nonzero.
By Corollary~\ref{C: restomir2}, given $\phi\in\swrz(\bar U_\mir)$ and $W'\in\ind_{N_M}^{\mir_M}\fpsi_{N_M}^{-1}$
there exists (a unique) $\typZe^\vee\in\Whit^{\fpsi_N^{-1}}(\sigma^\vee)_\flat$ such that
\[
\typZe^\vee(ul\bar v)=\phi(\bar v)W'(l)\ \ \forall u\in U_\mir,\ l\in\mir_M,\ \bar v\in\bar U_\mir.
\]
Thus, we only need to show that $\int_{\bar U_\mir} MW(\bar u)\ d\bar u$, or equivalently by Lemma~\ref{L: A1},
$MW(\ww)$, is nonzero for some $W\in \Whit^{\fpsi_N}(\Pi)_\sharp$.
However, this is clear since $MW(\ww)=\int_{\bar U}W(\bar u)\ d\bar u$.

This finishes the proof of Proposition \ref{prop: appmain}.

\def\cprime{$'$} 
\begin{bibdiv}
\begin{biblist}

\bib{MR0579172}{article}{
      author={Bernstein, I.~N.},
      author={Zelevinsky, A.~V.},
       title={Induced representations of reductive {${\germ p}$}-adic groups.
  {I}},
        date={1977},
        ISSN={0012-9593},
     journal={Ann. Sci. \'Ecole Norm. Sup. (4)},
      volume={10},
      number={4},
       pages={441\ndash 472},
      review={\MR{0579172 (58 \#28310)}},
}

\bib{MR748505}{incollection}{
      author={Bernstein, Joseph~N.},
       title={{$P$}-invariant distributions on {${\rm GL}(N)$} and the
  classification of unitary representations of {${\rm GL}(N)$}
  (non-{A}rchimedean case)},
        date={1984},
   booktitle={Lie group representations, {II} ({C}ollege {P}ark, {M}d.,
  1982/1983)},
      series={Lecture Notes in Math.},
      volume={1041},
   publisher={Springer},
     address={Berlin},
       pages={50\ndash 102},
         url={http://dx.doi.org/10.1007/BFb0073145},
      review={\MR{748505 (86b:22028)}},
}

\bib{1802.02637}{misc}{
      author={Cai, Yuanqing},
      author={Friedberg, Solomon},
      author={Kaplan, Eyal},
       title={Doubling constructions: local and global theory, with an
  application to global functoriality for non-generic cuspidal
  representations},
        date={2018},
        note={arXiv:1802.02637},
}

\bib{MR0404534}{incollection}{
      author={Gel{\cprime}fand, I.~M.},
      author={Kajdan, D.~A.},
       title={Representations of the group {${\rm GL}(n,K)$} where {$K$} is a
  local field},
        date={1975},
   booktitle={Lie groups and their representations ({P}roc. {S}ummer {S}chool,
  {B}olyai {J}\'anos {M}ath. {S}oc., {B}udapest, 1971)},
   publisher={Halsted, New York},
       pages={95\ndash 118},
      review={\MR{0404534 (53 \#8334)}},
}

\bib{1610.00284}{misc}{
      author={Gomez, Raul},
      author={Gourevitch, Dmitry},
      author={Sahi, Siddhartha},
       title={Whittaker supports for representations of reductive groups},
        date={2016},
        note={arXiv:1610.00284},
}

\bib{MR3705224}{article}{
      author={Gomez, Raul},
      author={Gourevitch, Dmitry},
      author={Sahi, Siddhartha},
       title={Generalized and degenerate {W}hittaker models},
        date={2017},
        ISSN={0010-437X},
     journal={Compos. Math.},
      volume={153},
      number={2},
       pages={223\ndash 256},
         url={http://dx.doi.org/10.1112/S0010437X16007788},
      review={\MR{3705224}},
}

\bib{MR701565}{article}{
      author={Jacquet, H.},
      author={Piatetskii-Shapiro, I.~I.},
      author={Shalika, J.~A.},
       title={Rankin-{S}elberg convolutions},
        date={1983},
        ISSN={0002-9327},
     journal={Amer. J. Math.},
      volume={105},
      number={2},
       pages={367\ndash 464},
         url={http://dx.doi.org/10.2307/2374264},
      review={\MR{701565 (85g:11044)}},
}

\bib{MR2996769}{article}{
      author={Kret, Arno},
      author={Lapid, Erez},
       title={Jacquet modules of ladder representations},
        date={2012},
        ISSN={1631-073X},
     journal={C. R. Math. Acad. Sci. Paris},
      volume={350},
      number={21-22},
       pages={937\ndash 940},
         url={http://dx.doi.org/10.1016/j.crma.2012.10.014},
      review={\MR{2996769}},
}

\bib{MR3573961}{article}{
      author={Lapid, Erez},
      author={M{\'{\i}}nguez, Alberto},
       title={On parabolic induction on inner forms of the general linear group
  over a non-archimedean local field},
        date={2016},
        ISSN={1022-1824},
     journal={Selecta Math. (N.S.)},
      volume={22},
      number={4},
       pages={2347\ndash 2400},
         url={http://dx.doi.org/10.1007/s00029-016-0281-7},
      review={\MR{3573961}},
}

\bib{MR3178433}{article}{
      author={M{\'{\i}}nguez, Alberto},
      author={S{\'e}cherre, Vincent},
       title={Repr\'esentations lisses modulo {$\ell$} de {${G}{L}_m({D})$}},
        date={2014},
        ISSN={0012-7094},
     journal={Duke Math. J.},
      volume={163},
      number={4},
       pages={795\ndash 887},
         url={http://dx.doi.org/10.1215/00127094-2430025},
      review={\MR{3178433}},
}

\bib{MR913667}{article}{
      author={M{\oe}glin, C.},
      author={Waldspurger, J.-L.},
       title={Mod\`eles de {W}hittaker d\'eg\'en\'er\'es pour des groupes
  {$p$}-adiques},
        date={1987},
        ISSN={0025-5874},
     journal={Math. Z.},
      volume={196},
      number={3},
       pages={427\ndash 452},
         url={http://dx.doi.org/10.1007/BF01200363},
      review={\MR{913667 (89f:22024)}},
}

\bib{MR2260513}{article}{
      author={Sakellaridis, Yiannis},
       title={A {C}asselman-{S}halika formula for the {S}halika model of {${\rm
  GL}_n$}},
        date={2006},
        ISSN={0008-414X},
     journal={Canad. J. Math.},
      volume={58},
      number={5},
       pages={1095\ndash 1120},
         url={https://doi.org/10.4153/CJM-2006-040-6},
      review={\MR{2260513}},
}

\bib{MR3117308}{article}{
      author={Sakellaridis, Yiannis},
       title={Spherical functions on spherical varieties},
        date={2013},
        ISSN={0002-9327},
     journal={Amer. J. Math.},
      volume={135},
      number={5},
       pages={1291\ndash 1381},
         url={https://doi.org/10.1353/ajm.2013.0046},
      review={\MR{3117308}},
}

\bib{MR3764130}{article}{
      author={Sakellaridis, Yiannis},
      author={Venkatesh, Akshay},
       title={Periods and harmonic analysis on spherical varieties},
        date={2017},
        ISSN={0303-1179},
     journal={Ast\'erisque},
      number={396},
       pages={viii+360},
      review={\MR{3764130}},
}

\bib{MR2153954}{article}{
      author={Sato, Fumihiro},
       title={Fourier coefficients of {E}isenstein series of {${\rm GL}_n$},
  local densities of square matrices and subgroups of finite abelian groups},
        date={2005},
        ISSN={0010-258X},
     journal={Comment. Math. Univ. St. Pauli},
      volume={54},
      number={1},
       pages={33\ndash 48},
      review={\MR{2153954}},
}

\bib{MR584084}{article}{
      author={Zelevinsky, A.~V.},
       title={Induced representations of reductive {${\germ p}$}-adic groups.
  {II}. {O}n irreducible representations of {${\rm GL}(n)$}},
        date={1980},
        ISSN={0012-9593},
     journal={Ann. Sci. \'Ecole Norm. Sup. (4)},
      volume={13},
      number={2},
       pages={165\ndash 210},
         url={http://www.numdam.org/item?id=ASENS_1980_4_13_2_165_0},
      review={\MR{584084 (83g:22012)}},
}

\end{biblist}
\end{bibdiv}

\end{document}